\numberwithin{equation}{chapter}
\newtheorem{thm}{thm}[chapter]
\theoremstyle{definition}
\newtheorem{proposition}[thm]{Proposition}
 \newtheorem{definition}[thm]{Definition} % Removed for SPRINGER
 \newtheorem{guess}[thm]{Guess} % Removed for SPRINGER
\newtheorem{theorem}[thm]{Theorem}% Removed for SPRINGER
\newtheorem{example}[thm]{Example}% Removed for SPRINGER
\newtheorem{exercise}[thm]{Exercise}% Removed for SPRINGER
\newtheorem{example/exercise}[thm]{Example/Exercise}
\newtheorem{lemma}[thm]{Lemma}% Removed for SPRINGER
\newtheorem{remark}[thm]{Remark}% Removed for SPRINGER
\newtheorem{note[thm]}{Note}
\newtheorem{hypothesis}[thm]{Hypothesis}
\newtheorem{question-numbered}[thm]{Question}% Removed for SPRINGER
\newtheorem{notation}[thm]{Notation}
\newtheorem{warning-numbered}[thm]{Warning}% Removed for SPRINGER
\newtheorem{convention}[thm]{Convention}
\newtheorem{sadness}[thm]{Sadness}
    \newcommand{\RR}{{\mathbb R}}
    \newcommand{\arw}{\ar}
    \newcommand{\conf}{\mathsf{Conf}}
    \newcommand{\diff}{\mathrm{Diff}}
    \newcommand{\bb}{\mathbb}
    \newcommand{\mrm}{\mathrm}
    \newcommand{\cal}{\mathcal}% Removed for SPRINGER
    \newcommand{\rta}{\rightarrow}
    \newcommand{\xrta}{\xrightarrow}
    \DeclareMathOperator{\spec}{Spec}
    \newcommand{\del}{\partial}
    \newcommand{\diskn}{\disk_{n,\orr}}
    \newcommand{\disnn}{\diskn}
    \newcommand{\catf}[1]{{\mathsf{#1}}}
    \newcommand{\R}{\mathbb{R}}
    \newcommand{\K}{\mathbb{K}}
    \newcommand{\Aut}{\operatorname{Aut}}
    \newcommand{\End}{\operatorname{End}}
    \newcommand{\id}{\operatorname{id}}
    \newcommand{\Cat}{\catf{Cat}}
    \newcommand{\Set}{\catf{Set}}
    \let\to\undefined
    \newcommand{\to}{\longrightarrow}
    \let\mapsto\undefined
    \newcommand{\mapsto}{\longmapsto}
    \newcommand{\Vect}{\catf{Vect}}
    \newcommand{\Fun}{\catf{Fun}}
    \newcommand{\AlgBimod}{\catf{AlgBimod}}
    \newcommand{\ComAlg}{\catf{ComAlg}}
    \newcommand{\One}{\mathbbm{1}}
    \DeclareMathSymbol{\Phiit}{\mathalpha}{letters}{"08} % kursive girghiche Buchstaben
    \DeclareMathSymbol{\Psiit}{\mathalpha}{letters}{"09}
    \DeclareMathSymbol{\Sigmait}{\mathalpha}{letters}{"06}
    \DeclareMathSymbol{\Xiit}{\mathalpha}{letters}{"04}
    \DeclareMathSymbol{\Piit}{\mathalpha}{letters}{"05}\let\Pi\undefined\newcommand{\Pi}{\Piit}
    \DeclareMathSymbol{\Gammait}{\mathalpha}{letters}{"00}
    \DeclareMathSymbol{\Omegait}{\mathalpha}{letters}{"0A}\let\Omega\undefined\newcommand{\Omega}{\Omegait}
    \DeclareMathSymbol{\Upsilonit}{\mathalpha}{letters}{"07}
    \DeclareMathSymbol{\Thetait}{\mathalpha}{letters}{"02}
    \let\Phi\undefined\newcommand{\Phi}{\Phiit}
    \let\Sigma\undefined\newcommand{\Sigma}{\Sigmait}
    \let\Psi\undefined\newcommand{\Psi}{\Psiit}
    \let\Gamma\undefined\newcommand{\Gamma}{\Gammait}
    \definecolor{Blue}  {rgb} {0.282352,0.239215,0.803921}
    \definecolor{Green} {rgb} {0.133333,0.545098,0.133333}
    \definecolor{Red}   {rgb} {0.803921,0.000000,0.000000}
    \definecolor{Violet}{rgb} {0.580392,0.000000,0.827450}
    \newcounter{jfc}
	\newcommand{\nc}{\newcommand}
	\nc{\enum}{\begin{enumerate}}
	\nc{\enumd}{\end{enumerate}}
    \nc{\eqn}{\begin{equation}}
    \nc{\eqnn}{\begin{equation}\nonumber}
    \nc{\eqnd}{\end{equation}}
    \nc{\orr}{\mathsf{or}}
    \nc{\fr}{\mathsf{fr}}
    \nc{\Emb}{\mathsf{Emb}}
    \nc{\morita}{\mathcal{M}\mathsf{orita}}
    \nc{\cdga}{\mathcal{C}\mathsf{dga}}
    \nc{\sing}{\mathsf{Sing}}
    \nc{\chain}{\mathcal{C}\mathsf{hain}}
    \nc{\Chain}{\mathcal{C}\mathsf{hain}}
    \nc{\Ass}{\mathcal{A}\mathsf{ss}}
    \nc{\Alg}{\mathcal{A}\mathsf{lg}}
    \nc{\Cob}{\mathcal{C}\mathsf{ob}}
    \nc{\cob}{\mathcal{C}\mathsf{ob}}
    \nc{\TTop}{\mathcal{T}\!\!\mathsf{op}}% Removed for SPRINGER
	\nc{\spaces}{\mathsf{Spaces}}
	\nc{\kan}{\mathsf{Kan}}
    \nc{\mfld}{\mathcal{M}\mathsf{fld}}
    \nc{\mflddisc}{\mathsf{Mfld}}
    \nc{\disk}{\mathcal{D}\mathsf{isk}}
    \nc{\diskdisc}{\mathsf{Disk}}
    \nc{\kk}{\mathbbm{k}}
    \nc{\Cattwo}{\mathbbm{C}\mathsf{at}}
 	\nc{\DMO}{\DeclareMathOperator}
    \DMO{\op}{op}
    \DMO{\ob}{ob}
    \DMO{\colim}{colim}
    \DMO{\Sym}{Sym}
    \DMO{\fFun}{Fun}
    \DMO{\Conf}{Conf}
    \nc{\fg}{\mathfrak{g}}
    \nc{\EE}{\mathbb{E}}
    \nc{\CC}{\mathbb{C}}
    \nc{\cA}{\mathcal{A}}
    \nc{\cC}{\mathcal{C}}
    \nc{\cD}{\mathcal{D}}
    \nc{\cE}{\mathcal{E}}
    \nc{\LL}{\mathbb{L}}
    \nc{\ZZ}{\mathbb{Z}}
    \nc{\tensor}{\otimes}
\author{Hiro Lee Tanaka \\
\, \\
with contributions from  \\
\, \\
Araminta Amabel \\
Artem Kalmykov \\
Lukas M\"uller \\
}
\title{Lectures 	
	on 
	Factorization Homology, 
	$\infty$-Categories, 
	and 
	Topological Field Theories}
\begin{document}

\frontmatter
\maketitle

%“This is a preprint of the following work: [author of the book], [book title], [year of publication], [publisher (as it appears on the cover of the book)] reproduced with permission of [publisher (as it appears on the copyright page of the book)]. The final authenticated version is available online at:
%http://dx.doi.org/[insert DOI]”.

\chapter*{This is a preprint}
This is a preprint of the following work: Hiro Lee Tanaka (with contributions from Araminta Amabel, Artem Kalmykov, and Lukas M\"uller), {\em Lectures on Factorization Homology, $\infty$-Categories, and Topological Field Theories}, 2021, SpringerBriefs in Mathematical Physics, reproduced with permission of Springer International Publishing. The final authenticated version is available online at \url{http://dx.doi.org/10.1007/978-3-030-61163-7}.

Another link is here: \url{https://www.springer.com/gp/book/9783030611620}. 

While efforts have been made to retain enumerations of definitions, theorems, figures, et cetera, as close as possible to the published version, these enumerations may change through the editorial process, and this preprint may not be updated to reflect those changes due to contractual obligations. Changes to future editions of the book may also not be reflected in this pre-print.

\chapter*{Preface}

The aim of these lectures is to give an expository and informal introduction to the topics in the title. The target audience was imagined to be graduate students who are not homotopy theorists.

There will be two running themes in these lectures:

The first is the interaction between smooth topology and algebra. For example, we will see early on that the smooth topology of oriented, 1-dimensional disks (together with how they embed into each other) captures exactly the algebraic structure of being a unital associative algebra. Factorization homology allows us to take such algebraic structures and construct invariants of both manifolds and algebras through formal, categorical constructions. As an example, the category of representations of a quantum group $U_q(\fg)$ is braided monoidal (in fact, ribbon), and hence defines invariants of 2-dimensional manifolds equipped with additional structures. These invariants come equipped with mapping class group actions that recover well-known actions discovered in the representation theory of quantum groups~\cite{bzbj}.

Another place we will see this theme is in the cobordism hypothesis of Baez-Dolan~\cite{baezdolan} and Lurie~\cite{lurie-tft}. This articulates a precise way in which the smooth theory of framed cobordisms recovers the purely algebraically characterizable notion of objects having duals and morphisms having adjoints.

The second theme is the need to cohere various algebraic structures. A common question during the lectures was ``so is this actually associative or not?'' Coherence is a subtle beast, and we indicate some of the places where the language of $\infty$-categories is essential in articulating coherences, and in concluding the existence of continuous symmetries (thus allowing us to observe the existence of, say, mapping class group actions). The goal was to give an audience a feel for why this language is useful; we certainly did not give a user's guide, but we hoped to make the ideas less foreign.

It must be said that there are many topics that could have been covered, but certainly were not. Though this list is intended as an apology, the reader may also take it to be a list of further reading, or at least a list of exciting storylines to follow. 
\begin{enumerate}
	\item Most notably, there have been numerous works by Ayala and Francis; they have shown how factorization homology can be generalized in ways exhibiting the explicit coherence between Poincar\'e Duality for smooth manifolds and Koszul duality for $E_n$-algebras~\cite{ayala-francis-zero-pointed, ayala-francis-topological}. They have also developed a program for proving the cobordism hypothesis that does not involve intricate Cerf theory~\cite{ayala-francis-tft}. These are important developments that articulate how factorization homology and its generalizations may be central to smooth topology and higher algebra.
	\item To those interested in the interactions between representation theory and low-dimensional topology, let us mention another disservice. We do not touch on the strategy for producing quantum link invariants by computing invariants of chain maps
	\eqnn
	\int_{S^1 \times D^2} A \to
	\int_{S^3} A
	\eqnd
induced by framed embeddings of knots in $S^3$. (Here, $A$ is an $E_3$-algebra one can construct out of a semisimple Lie algebra.) As far as HLT understands, this is work in progress of Costello, Francis, and Gwilliam. We also do not touch upon the work of Ben-Zvi-Brochier-Jordan~\cite{bzbj}.
	\item Another perspective lacking in these notes is the work of Costello and Costello-Gwilliam~\cite{costello-gwilliam} 
%    	@book {MR3586504,
%        AUTHOR = {Costello, Kevin and Gwilliam, Owen},
%         TITLE = {Factorization algebras in quantum field theory. {V}ol. 1},
%        SERIES = {New Mathematical Monographs},
%        VOLUME = {31},
%     PUBLISHER = {Cambridge University Press, Cambridge},
%          YEAR = {2017},
%         PAGES = {ix+387},
%          ISBN = {978-1-107-16310-2},
%       MRCLASS = {81-01 (17B69 18D50 53D55 81R05 81R10)},
%      MRNUMBER = {3586504},
%    MRREVIEWER = {Domenico Fiorenza},
%           DOI = {10.1017/9781316678626},
%           URL = {https://doi-org.libproxy.txstate.edu/10.1017/9781316678626},
%    }
	on factorization algebras, which makes more manifest the connections to perturbative quantum field theories and deformation theory. Aside from the physical considerations, these works also give deformation-theoretic sources of locally constant factorization algebras and exhibit fruitful connections to shifted Lie and Poisson algebras in characteristic 0. Indeed, we make no mention of Kontsevich formality theorems or of $P_n$-algebras in these notes. 
	\item We do not touch the algebro-geometric avatars expressed through Ran spaces, as developed by Francis-Gaitsgory~\cite{francis-gaitsgory} and as utilized by Gaitsgory-Lurie~\cite{gaitsgory-lurie} to prove Weil's conjecture on Tamagawa numbers. 
	\item Lurie's topological account in~\cite{higher-algebra} also uses the Ran space perspective; there he refers to (a slightly more general version of) factorization homology as topological chiral homology.
	\item We do not discuss applications to configuration spaces (though this is a manifestly natural topic to consider) and relations to Lie algebra homology. Representative works include those of Knudsen~\cite{knudsen-1,knudsen-2} and Kupers-Miller~\cite{kupers-miller}. The algebro-geometric techniques from the previous bullet point have also been used to prove homological stability for configuration spaces arising in positive characteristic by Ho~\cite{ho-free-factorization-algebras}. 
%	@article {ho-free-factorization-algebras,
%    AUTHOR = {H\grcf{o}, Q. P.},
%     TITLE = {Free factorization algebras and homology of configuration
%              spaces in algebraic geometry},
%   JOURNAL = {Selecta Math. (N.S.)},
%  FJOURNAL = {Selecta Mathematica. New Series},
%    VOLUME = {23},
%      YEAR = {2017},
%    NUMBER = {4},
%     PAGES = {2437--2489},
%      ISSN = {1022-1824},
%   MRCLASS = {55R80 (14A20 14F05 17B56 18G55 81R99)},
%  MRNUMBER = {3703458},
%MRREVIEWER = {Lennart Meier},
%       DOI = {10.1007/s00029-017-0339-1},
%       URL = {https://doi-org.libproxy.txstate.edu/10.1007/s00029-017-0339-1},
%}
	\item There is no mention in these notes of the intersection with derived algebraic geometry. For example, when $A$ is a commutative cdga in non-positive degree, its Hochschild chain complex is at once (functions on) the derived loop space of $A$ (in the algebro-geometric sense) and the factorization homology of the circle with coefficients in $A$. Put another way, Hochschild chains also have an interpretation as the sheaf cohomology of the structure sheaf of the mapping stack $Map(S^1,\spec A)$ where $S^1$ is the constant stack. On the other hand, when $\spec A$ is replaced by a stack which is non-affine, $\tensor$-excision fails. Moreover, when one further considers stacks not on the site of (commutative) affine schemes, but on a site of $\EE_n$-stacks (where ``affine'' objects are now Spec of $\EE_n$-algebras), these mapping stacks are expected to be far more sensitive manifold invariants. Many are expected to arise as the result of deformation quantizations of shifted symplectic structures as discussed in the next bullet point. (For more, see~\cite{benzvi-francis-nadler} and~\cite{benzvi-nadler-loops-1} and~\cite{ayala-francis-primer}.) 
	\item Another open problem relating to derived algebraic geometry is the problem of quantizing shifted symplectic stacks. For example, the AKSZ formalism---as reinterpreted in~\cite{ptvv}---suggests we should be able to quantize (the category of sheaves on) certain mapping stacks with shifted Poisson or shifted symplectic structures. This was carried out in some examples in~\cite{cptvv}, and for example, one obtains a braided monoidal quantization of the category of finite-dimensional representations of a Lie algebra $\fg$. These quantizations are expected to lead to interesting examples of TFTs, in contrast to the non-quantized TFTs.  (The naive TFT $W \mapsto Map(W,BG)$ is only sensitive to the homotopy type of manifolds $W$.) The intuition that certain symplectic structures should give rise to quantizations is of course an old story with many modern narratives, but it should be mentioned that a hugely influential starting point of algebraically deformation-quantizing Poisson structures is Kontsevich's formality theorem for Poisson manifolds~\cite{kontsevich-poisson}. 
\end{enumerate}

For those seeking more, we refer to the excellent notes of Teleman~\cite{teleman-lectures}, who also takes a far more representation-theoretic basepoint for their exposition. Other resources include~\cite{ayala-francis-topological},~\cite{ayala-francis-primer},~\cite{ginot}, and~5.5 of~\cite{higher-algebra}.

Finally, we note that the contents of these notes expand significantly on the contents of the delivered lectures. This was done in the hopes of having a somewhat more complete written account of the story. We warn future speakers that they should not attempt to fit a chapter of these notes into a single lecture. We also remark that most of Chapter~\ref{chapter. infty cats} did not appear in the original lectures; as such, other chapters do not assume deep knowledge of higher categorical tools. 

\begin{convention}\label{convention:manifolds-small}
Unless noted explicitly, every manifold in this work is smooth and paracompact. For simplicity the reader may assume that every manifold may be obtained by a {\em finite} sequence of open handle attachments---for example, every manifold arises as the interior of some compact manifold with boundary. One can treat larger manifolds (for example, countably infinite-genus surfaces), but we would have to say a few words about preserving filtered colimits. See Remark~\ref{remark.open-exhaustion-colimits}.
\end{convention}

\vspace{\baselineskip}
\begin{flushright}\noindent
San Marcos, Texas \hfill {\it Hiro Lee Tanaka}\\
July 2020 \hfill {\it \, }\\
\end{flushright}

\chapter*{Acknowledgements}

These notes were based on lectures given by HLT, and notes taken by AA, AK, and LM, at the 2019 Summer School on Geometric Representation Theory and Low-Dimensional Topology, hosted at the International Centre for Mathematical Sciences in Edinburgh, Scotland. We would like to thank ICMS for providing facilities and administrative support for the summer school. We also thank the organizers---Dan Freed, David Jordan, Peter Samuelson, and Olivier Schiffmann---and David Jordan especially for facilitating the organizational mechanisms that led to a timely production of these notes. HLT would like to thank the students at the school, whose questions gave rise to many of the clarifying remarks in these notes; his coauthors, who gave feedback and drew all the nice pictures (HLT drew the not-nice ones); those who gave great feedback on a draft of these notes (D. Ayala, J. Francis, G. Ginot); and Pavel Safronov, who in so many words remarked that a version of these notes would have been useful five years ago. To say the least, the idea of not hearing the same statement five years from now was motivating.

The summer school and this work were supported under NSF Grant \#1856643 and European
Research Council grant no. 637618. AA was supported by an NSF GRFP (NSF Grant No. 1122374). AK was supported by an NCCR SwissMAP grant of the Swiss National Science Foundation.

\tableofcontents

\mainmatter

\chapter{The one-dimensional case}
\label{chapter. dim 1}

The goal of our first lecture is to see that the geometry of oriented embeddings of open, 1-dimensional disks gives rise to the algebra of associativity. That is, local building blocks of oriented 1-manifolds encode algebraic structure. We then explore how these local structures could be extended to define invariants of all oriented 1-dimensional manifolds. We study one extension in particular, given by factorization homology.

Of course, we know how to classify (oriented) 1-manifolds, so this invariant gains little for us as a manifold invariant. But we will see that an interesting algebraic invariant (the Hochschild chain complex) appears. Moreover, by exploiting the continuity of factorization homology, we will discover that the Hochschild complex naturally admits a circle action. 

We will move to higher-dimensional manifolds in the Chapter~\ref{chapter. fact hom in higher dims}.

\section{The algebra of disks} 
\label{sec: 1-disks}

We begin with the following definition.

\begin{definition}
We let $\diskdisc_{1,\orr}$ 
\index{$\diskdisc_{1,\orr}$}
denote the category of oriented, 1-dimensional disks. Objects are finite disjoint unions of oriented 1-dimensional open disks. The collection of morphisms 
	$
	\hom(X, Y)
	$
consists of all smooth embeddings $j: X \hookrightarrow Y$ respecting orientations.
\end{definition}

\begin{remark}
Up to isomorphism, we can enumerate the objects as follows:
	\eqnn
	\emptyset, \qquad \RR, \qquad \RR \coprod \RR, \qquad \ldots \qquad \RR^{\coprod k} , \qquad \ldots.
	\eqnd
Note that we allow for the empty disjoint union. 
\end{remark}

This category comes with a natural operation of taking disjoint union: If $X$ is a union of $k$ oriented disks and $Y$ is a union of $l$ oriented disks, then $X \coprod Y$ is a union of $k+l$ oriented disks. There are natural isomorphisms
	\eqnn
	X \coprod Y \cong Y \coprod X
	\eqnd
and $\coprod$ renders $\diskdisc_{1,\orr}$ a symmetric monoidal category. The empty set $\emptyset$ is the monoidal unit of this category, as we can supply natural isomorphisms $X \coprod \emptyset \cong X \cong \emptyset \coprod X$.

\begin{notation}
Fix a field $\kk$. We denote by $\Vect_{\kk}^{\otimes_{\kk}}$ the category of $\kk$-vector spaces. The superscript $\tensor_{\kk}$ indicates that we view $\Vect_{\kk}$ as a symmetric monoidal category equipped with the usual tensor product of vector spaces: 
	\eqnn
	\Vect_{\kk} \times \Vect_{\kk} \to \Vect_{\kk},
	\qquad
	(V,W) 
	\mapsto V\otimes_{\kk} W.
	\eqnd
\end{notation}

\newenvironment{functor-props}{
	  \renewcommand*{\theenumi}{(F\arabic{enumi})}
	  \renewcommand*{\labelenumi}{(F\arabic{enumi})}
	  \enumerate
	}{
	  \endenumerate
}

\index{$\diskdisc_{1,\orr}^{\coprod}$}
That's the set-up. For no good reason whatsoever, I want to now study functors 
	\eqnn
	F: \diskdisc_{1,\orr}^{\coprod} \rightarrow \Vect_{\kk}^{\otimes_{\kk}}
	\eqnd
where:
\begin{functor-props}
\item $F$ is symmetric monoidal,
\item \label{item.isotopies} Isotopic embeddings are sent to the same linear map.
\end{functor-props}

\begin{example}
Thanks to the second requirement, such functors become quite tractable. For instance, consider the orientation-preserving smooth embeddings of $\RR$ into itself. This is the space of all smooth, strictly increasing functions $\RR \rightarrow \RR$. These embeddings are all isotopic to a very simple one -- the identity map $id_{\RR}: \RR \rightarrow \RR$. You can check this fact as an exercise.

Therefore, if the functor $F$ satisfies \ref{item.isotopies}, $F$ sends the whole collection of morphisms 
	\eqnn
	\hom_{\diskdisc_{1,\orr}}(\RR,\RR)
	\eqnd
to the identity element 
	\eqnn
	id_{F(\RR)}\in \hom_{\Vect_{\kk}}(F(\RR),F(\RR)).
	\eqnd 
\end{example}

\begin{warning-numbered}
If you are not used to monoidal categories, when you hear that $F$ must be symmetric monoidal, you probably imagine that $F$ ``respects'' the symmetric monoidal structures on both sides. But I put ``respect'' in quotes because being symmetric monoidal is not merely a property of $F$; the {\em data} of $F$ being symmetric monoidal means we also {\em supply} natural isomorphisms
	\eqnn
	F(X\coprod Y) \xrightarrow{\simeq} F(X)\otimes_{\kk} F(Y).
	\eqnd
We also demand that the swap isomorphism $X \coprod Y \cong Y \coprod X$ is sent to the swap isomorphism $F(X) \tensor F(Y) \cong F(Y) \tensor F(X)$ in a way compatible with the above natural isomorphism.

\end{warning-numbered}

Here is the main result I'd like us to understand. It is fundamental to everything that follows.

\begin{theorem}\label{theorem.associativity}
Suppose $F: \diskdisc_{1,\orr}^{\coprod} \to \Vect_{\kk}^{\tensor_\kk}$ is a symmetric monoidal functor such that if $j$ and $j'$ are isotopic embeddings, then $F(j) = F(j')$.
Then the data of $F$ is equivalent to the data of a unital associative $\kk$-algebra.
\end{theorem}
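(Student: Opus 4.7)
Write $A \eqD F(\RR)$. Symmetric monoidality forces $F(\emptyset) \cong \kk$ and $F(\RR^{\coprod k}) \cong A^{\tensor k}$ for every $k$, so on objects $F$ is determined (up to coherent isomorphism) by the single vector space $A$. The entire content of the theorem therefore lies in what $F$ does on morphisms.

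The plan is to extract the algebra structure from the hom-spaces of $\diskdisc_{1,\orr}$. Pick any orientation-preserving embedding $j : \RR \coprod \RR \hookrightarrow \RR$ that sends the first copy to the left of the second; by condition \ref{item.isotopies}, all such embeddings yield the same map, so we define
\eqn
m : A \tensor A \cong F(\RR \coprod \RR) \xrta{F(j)} F(\RR) = A.
\eqnd
Similarly, the unique (up to isotopy) embedding $u_0 : \emptyset \hookrightarrow \RR$ gives, after identifying $F(\emptyset) \cong \kk$, a map $u : \kk \to A$ which we declare to be the unit. To verify associativity, observe that any two orientation-preserving embeddings $\RR^{\coprod 3} \hookrightarrow \RR$ which respect the linear ordering on the three source components are isotopic. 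On the other hand, the ordered triple embedding factors in $\diskdisc_{1,\orr}$ both as $j \circ (j \coprod \id_\RR)$ and as $j \circ (\id_\RR \coprod j)$ (up to isotopy). Functoriality plus symmetric monoidality, applied to these two factorizations, yield exactly $m \circ (m \tensor \id) = m \circ (\id \tensor m)$. The two unit axioms follow in the same way by factoring the identity $\RR \to \RR$ through $u_0 \coprod \id_\RR$ and $\id_\RR \coprod u_0$.

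For the reverse direction, given a unital associative algebra $(A,m,u)$ one must build a functor $F_A$ and then check the two constructions are mutually inverse. The key geometric input is a description of isotopy classes of morphisms: an orientation-preserving embedding $\RR^{\coprod k} \hookrightarrow \RR^{\coprod l}$ is determined, up to isotopy, by (i) a map $f : \{1,\dots,k\} \to \{1,\dots,l\}$ recording which source component lands in which target, and (ii) a total order on each fibre $f^{-1}(i)$ recording their left-to-right arrangement. On such an embedding one defines $F_A$ to be the map $A^{\tensor k} \to A^{\tensor l}$ that, for each target $i$, multiplies the factors of $f^{-1}(i)$ together in the prescribed order (using $u$ when the fibre is empty), after first permuting tensor factors via the induced shuffle. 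Associativity and unitality guarantee this is well-defined, and functoriality reduces to the associativity/unit axioms applied to nested compositions. Symmetric monoidality is visible by construction because the shuffle permutations are respected.

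The main obstacle will be step three on the algebraic side and step six on the geometric side: precisely, controlling coherence. One must be careful that the symmetric monoidal structure on $F$ (which includes choices of isomorphisms $F(X \coprod Y) \cong F(X) \tensor F(Y)$ and compatibility with the symmetry) does not secretly impose commutativity on $A$. The point is that the two embeddings $\RR \coprod \RR \hookrightarrow \RR$ placing the first component on the left versus the right are \emph{not} isotopic, so they produce two distinct maps, $m$ and $m \circ \tau_{A,A}$, rather than equating them. A clean way to package the whole argument, which also sidesteps bookkeeping, is to identify $\diskdisc_{1,\orr}^{\coprod}$ (after quotienting morphisms by isotopy) with the symmetric monoidal envelope of the associative operad $\Ass$; the theorem then becomes the standard statement that symmetric monoidal functors out of this envelope to $\Vect_\kk^{\tensor_\kk}$ are the same as $\Ass$-algebras in $\Vect_\kk$, i.e.\ unital associative algebras.
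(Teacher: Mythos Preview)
Your proof is correct and follows essentially the same approach as the paper's sketch: extract $m$ and $u$ from the embeddings $\RR\coprod\RR\hookrightarrow\RR$ and $\emptyset\hookrightarrow\RR$, verify associativity and unitality via isotopies of the relevant composite embeddings, and reconstruct $F$ from $(A,m,u)$ using the classification of isotopy classes of morphisms in $\diskdisc_{1,\orr}$. Your treatment of the reverse direction is in fact more explicit than the paper's (which merely asserts the factorization through permutations, $\eta$, and $j_1$), and your closing operadic remark is a correct and useful repackaging that the paper does not make.
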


\begin{proof}[Sketch]
Let us set the notation
	\eqnn
	A = F(\RR).
	\eqnd
Since $F$ is symmetric monoidal, the empty set $\emptyset$ is sent to the monoidal unit of $\Vect_{\kk}^{\otimes_{\kk}}$, which is the base field $\kk$. More generally, any object of $\disk_{1,\orr}$ is a disjoint union of $l$ disks, and $F$ sends $\RR^{\coprod l}$ to the tensor power $A^{\otimes l}$. 

We now study $F$'s effect on morphisms. Consider the morphism set 
	$
	\hom_{\diskdisc_{1,\orr}^{\coprod}}(\emptyset,\RR).
	$ 
This set consists of one element, given by the embedding $\eta: \emptyset \rightarrow \RR$. $F$ determines a map
	\eqnn
	u=F(\eta): \kk \rightarrow A
	\eqnd 
which we will call the unit. 

Now consider $\hom_{\diskdisc_{1,\orr}^{\coprod}}(\RR^{\coprod 2},\RR)$. Up to isotopy, there are exactly two embeddings $\RR \coprod \RR \to \RR$. To see this, fix a smooth orientation-preserving embedding $j: \RR \coprod \RR \to \RR$.  Let us order (non-canonically) the connected components of the domain $\RR \coprod \RR$, and consider the image of each component under $j$; the orientation of the codomain $\RR$ respects this ordering, or does not. This distinguishes the two isotopy classes to which $j$ can belong.

Let us denote an order-respecting embedding by $j_1$, and a non-respecting embedding by $j_2$. 

Take
	\eqnn
	m = F(j_1): A\otimes_{\kk} A \rightarrow A.
	\eqnd
This is our multiplication. We claim that $m$ and $u$ determine a unital associative algebra structure on $A$.

\begin{itemize}
    \item Unit: we need to verify that the diagram of vector spaces
    	\eqn\label{eqn:unitality}
        \begin{tikzcd}
        A \arrow[r,"\simeq"] \arrow[drr,"id_A"'] & A \otimes_{\kk} \kk \arrow[r,"id_A\otimes_{\kk} u"] &  A\otimes_{\kk} A \arrow[d,"m"] \\
        & & A
        \end{tikzcd}
        \eqnd
    commutes. Consider the following diagram in $\diskdisc_{1,\orr}$:
        \begin{center}
            \begin{tikzcd}\label{assoc-alg}
            \RR \arrow[r,"\simeq"] \arrow[drr,"id_{\RR}"'] & \RR \coprod \emptyset \arrow[r,"id_{\RR}\coprod \eta"] &  \RR \coprod \RR \arrow[d,"j_1"] \\
            & & \RR.
            \end{tikzcd}
        \end{center}
    This diagram is not commutative---the embeddings of $\RR$ to itself, given by $j_1 \circ (\id_\RR \coprod \eta)$ and $\id_\RR$, need not be equal---but there does exist an isotopy between the two embeddings. That is, the diagram commutes up to isotopy. Thus, by \ref{item.isotopies}, the induced diagram \eqref{eqn:unitality} is commutative.
    
    \item Associativity: we need to check
        \begin{equation}\label{eqn:associativity}
            \begin{tikzcd}
            A \otimes_{\kk} A \otimes_{\kk} A \arrow[r,"m\otimes_{\kk} id_A"] \arrow[d,"id_A\otimes_{\kk} m"] & A\otimes_{\kk} A \arrow[d,"m"] \\
            A\otimes_{\kk} A \arrow[r,"m"] & A
            \end{tikzcd}
        \end{equation}
        commutes. 
        Consider the embeddings $j_1 \circ (j_1 \coprod \id_\RR)$ and $j_1 \circ (\id_\RR \coprod j_1)$, which we may draw as follows:
        \begin{center}
        \begin{tikzcd}
            \begin{tikzpicture}
            \node at (0.5,0.2) {1};
            \node at (2.5,0.2) {2};
            \node at (4.5,0.2) {3};
            \draw [brown,->] (0,0) -> (1,0);
            \draw [brown,->] (2,0) -> (3,0);
            \draw [brown,->] (4,0) -> (5,0);
            \end{tikzpicture} 
            \arrow[r] \arrow[d]& 
            \begin{tikzpicture}
            \node at (0.5,0.2) {1};
            \node at (2.5,0.2) {2};
            \node at (4.5,0.2) {3};
            \draw [brown,->] (0,0) -> (1,0);
            \draw [brown,->] (2,0) -> (5,0);
            \draw [brown,domain=-20:20]  plot ({cos(\x)+2}, {sin(\x)});
            \draw [brown,domain=160:200]  plot ({cos(\x)+3}, {sin(\x)});
            \draw [brown,domain=-20:20]  plot ({cos(\x)+4}, {sin(\x)});
            \draw [brown,domain=160:200]  plot ({cos(\x)+5}, {sin(\x)});
            \end{tikzpicture} 
            \arrow[d]
            \\
            \begin{tikzpicture}
            \node at (0.5,0.2) {1};
            \node at (2.5,0.2) {2};
            \node at (4.5,0.2) {3};
            \draw [brown,->] (0,0) -> (3,0);
            \draw [brown,->] (4,0) -> (5,0);
            \draw [brown,domain=-20:20]  plot ({cos(\x)}, {sin(\x)});
            \draw [brown,domain=160:200]  plot ({cos(\x)+1}, {sin(\x)});
            \draw [brown,domain=-20:20]  plot ({cos(\x)+2}, {sin(\x)});
            \draw [brown,domain=160:200]  plot ({cos(\x)+3}, {sin(\x)});
            \end{tikzpicture} 
            \arrow[r] &
            \begin{tikzpicture}
            \node at (0.5,0.2) {1};
            \node at (2.5,0.2) {2};
            \node at (4.5,0.2) {3};
            \draw [brown,->] (0,0) -> (5,0);
            \draw [brown,domain=-20:20]  plot ({cos(\x)}, {sin(\x)});
            \draw [brown,domain=160:200]  plot ({cos(\x)+1}, {sin(\x)});
            \draw [brown,domain=-20:20]  plot ({cos(\x)+2}, {sin(\x)});
            \draw [brown,domain=160:200]  plot ({cos(\x)+3}, {sin(\x)});
            \draw [brown,domain=-20:20]  plot ({cos(\x)+4}, {sin(\x)});
            \draw [brown,domain=160:200]  plot ({cos(\x)+5}, {sin(\x)});
            \end{tikzpicture} .
        \end{tikzcd}
        \end{center}
    The compositions $j_1 \circ (j_1 \coprod \id_\RR)$ and $j_1 \circ (\id_\RR \coprod j_1)$ are non-equal; but there is an isotopy between the two embeddings. Because $F$ collapses isotopic embeddings to the same linear map, the associativity axiom is satisfied.
        
    For the reverse implication, let $A$ be a unital associative algebra, i.e. we are given maps $m: A\otimes_{\kk} A \rightarrow A$ and $u: \kk \rightarrow A$ satisfying the unit and associativity axioms. We can reverse-engineer a functor $F$ by the same rules as above. We leave it to the reader to check that all embeddings, up to isotopy, can be factored as a composition of (i) component-permutations $\RR^{\coprod l} \to \RR^{\coprod l}$, and (ii) disjoint unions of $\eta$  and of $j_1$. Therefore, the property of $A$ being associative and unital is enough to determine the whole functor $F$.
\end{itemize}
\end{proof}

\begin{remark}
What happened to $j_2$, otherwise known as the ``other'' embedding $\RR \coprod \RR \to \RR$? We may express $j_2$ as a composition $j_1 \circ \sigma$ where $\sigma$ is the swap map $\RR \coprod \RR \cong \RR \coprod \RR$ switching the components. Because $F$ was demanded to be symmetric monoidal, we have
	\eqnn
	F(j_2) 
		= F(j_1 \circ \sigma)
		= m \circ \sigma_{\Vect_{\kk}^{\tensor_\kk}}.
	\eqnd
(Here, $\sigma_{\Vect_{\kk}^{\tensor_\kk}}$ is the swap map $V \tensor W \cong W \tensor V$.)
Hence $F(j_2)$ is no new data; it encodes the canonical ``opposite'' multiplication given by any associative algebra. 

Importantly, note that there is no reason to prefer $j_2$ over $j_1$, and in particular, we break symmetry when we decide whether a functor $F$ ought to determine the algebra $A$ with multiplication $m=F(j_1)$, or with multiplication $m=F(j_2)$. This symmetry is due to the automorphism of the category of associative algebras (which sends an algebra to its opposite), and the automorphism of the category of symmetric monoidal functors $F$ (which precomposes $F$ with the ``orientation-reversal'' automorphism of the category $\diskdisc_{1,\orr}$).
\end{remark}

\begin{remark}
As stated, Theorem~\ref{theorem.associativity} is a bit vague. One can in fact upgrade the statement to induce an equivalence of categories (and in particular, map natural transformations of functors to maps of associative algebras).
\end{remark}

In a sense, a miracle happened: an algebraic operations turns out to be ``encoded'' in something geometric and rather simple. 

\begin{remark}\label{remark: dissatisfaction}
However, some things leave us unsatisfied. 

\enum
\item First, why restrict ourselves just to open disks? Could we extend $F$ to other 1-manifolds---for example, to a circle? Then $F(S^1)$ would be an invariant of a circle, and this may even be an interesting invariant of the algebra $A$.

\item \label{item:dissatisfied-isotopy}Second, the isotopy business. It seems unnatural to require isotopy invariance from the functors. Let me be careful about this point. 

Imagine that we have managed to assign an invariant to a circle, say $F(S^1)$. Of course, up to isotopy, any two orientation-preserving diffeomorphisms of the circle are equivalent. But there are {\em interesting} isotopies---for example, the identity map $\id_{S^1}: S^1 \to S^1$ admits self-isotopies given by full rotations. Wouldn't our invariant $F(S^1)$ be more interesting if it detected these self-isotopies? 
\enumd
\end{remark}

We will come back to this last point in a moment. Let us first fantasize what an invariant of the circle might look like.

\section{The co-center: A first stab at a circle invariant}\label{section.circle-guess}

\begin{definition}
Let $\mflddisc_{1,\orr}^{\coprod}$
\index{$\mflddisc_{1,\orr}^{\coprod}$}
be the category of oriented, 1-dimensional manifolds with finitely many connected components. Morphisms are smooth embeddings that respect orientation. This category is also symmetric monoidal with respect to the disjoint union $\coprod$. 
\end{definition}

\begin{remark}
Note the (symmetric monoidal) inclusion of the category $\diskdisc_{1,\orr}^{\coprod}$ into the category $\mflddisc_{1,\orr}^{\coprod}$.
\end{remark}

Suppose we are given an associative algebra $A$, which we identify with a symmetric monoidal functor
	\eqnn
	F: \diskdisc_{1,\orr}^{\coprod} \to \Vect_{\kk}^{\tensor_\kk}
	\eqnd
(using Theorem~\ref{theorem.associativity}). 
Can we extend $F$ to a dashed functor as below?
	\eqnn
	\xymatrix{
	\diskdisc_{1,\orr}^{\coprod} \ar[d] \ar[r]^F & \Vect_{\kk}^{\tensor_\kk} \\
	\mflddisc_{1,\orr}^{\coprod} \ar@{-->}[ur].
	}
	\eqnd
Let us imagine what such an extension would ``want'' to apply to $S^1$. We will denote by $F(S^1)$ the vector space assigned by this extension.

From embeddings $\RR^{\coprod k} \rightarrow S^1$ we get maps $\phi_k: A^{\otimes k} \rightarrow F(S^1)$ for any $k\geq 0$. In fact, any such embedding factors through a single connected interval of $S^1$, so these linear maps must factor through the multiplication of $A$:
	\eqnn
	\phi_k: A^{\tensor k} \to A \to F(S^1).
	\eqnd
Moreover, since we are on a circle, one can isotope a configuration of $k$ intervals cyclically. 
For instance, for $k=2$, fixing an embedding $h: \RR \coprod \RR \to S^1$,  we witness an isotopy
	\eqnn
	h \circ \sigma \sim h.
	\eqnd
See Figure~\ref{figure. circle interval swap}.

\begin{figure}[h]
\caption{
An image of an isotopy realizing a cyclic permutation of embedded intervals in a circle. The vertical direction is the ``time'' direction along which the isotopy evolves.}\label{figure. circle interval swap}
	\eqnn
    \begin{overpic}[scale=1]{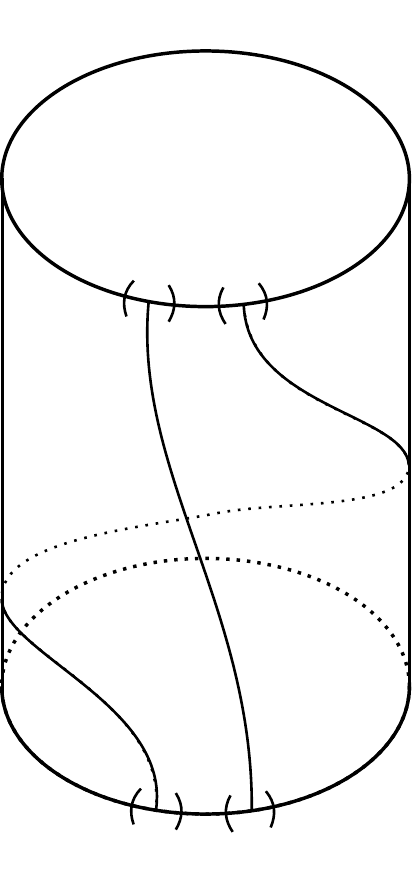}
    \put(16,67){$1$}
    \put(16,4){$2$}
    \put(26,67){$2$}
    \put(27,4){$1$}
    \end{overpic}
    \eqnd
\end{figure}

In particular, the linear map $\phi_2$ satisfies the property that $\phi_2 \circ \sigma = \phi_2$ where $\sigma$ is the swap map $A \tensor A \cong A \tensor A$. For example, for elements $x_1,x_2 \in A$, we have
	\eqnn
	\phi_1(x_2 x_1) = \phi_2(x_2 \tensor x_1) = \phi_2(x_1 \tensor x_2) = \phi_1(x_1 x_2).
	\eqnd
Let $[A,A] \subset A$ denote the vector subspace generated by commutators $x_1x_2 - x_2 x_1$. We see that $\phi_1$ must factor through the quotient
	\eqnn
	A/[A,A].
	\eqnd
There is at this point a naive:

\begin{guess}
The invariant $F(S^1)$ of the circle is isomorphic to $A/[A,A]$.
\end{guess}

\begin{remark}
Sometimes, $A/[A,A]$ is called the {\em cocenter} of $A$. If $A$ is commutative, the projection map from $A$ to the cocenter is an isomorphism. 
\end{remark}

The vector space $A/[A,A]$ has another presentation:

\begin{proposition}\label{prop:cocenter}
Let $A$ be an associative $\kk$-algebra. There exists a natural isomorphism
	\eqnn
	A/[A,A] \cong A\otimes_{A\otimes_{\kk} A^{op}}A
	\eqnd
of $\kk$-vector spaces. 
\end{proposition}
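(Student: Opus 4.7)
The plan is to present the relative tensor product explicitly as a quotient and exhibit mutually inverse $\kk$-linear maps. Write $A^e := A \tensor_{\kk} A^{op}$. The two copies of $A$ appearing in $A \tensor_{A^e} A$ acquire complementary $A^e$-module structures from the canonical $A$-bimodule structure on $A$: I would view the left-hand factor as a right $A^e$-module via $a \cdot (b \tensor c^{op}) := cab$, and the right-hand factor as a left $A^e$-module via $(b \tensor c^{op}) \cdot a := bac$. Consequently $A \tensor_{A^e} A$ is the quotient of $A \tensor_{\kk} A$ by the two families of relations
    \eqnn
    ab' \tensor b'' \;=\; a \tensor b'b'',
    \qquad
    c'a \tensor b'' \;=\; a \tensor b'' c',
    \eqnd
arising from the sub-algebras $A \tensor 1^{op}$ and $1 \tensor A^{op}$ of $A^e$ respectively.

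First I would define $\Phi \colon A \tensor_{A^e} A \to A/[A,A]$ by $a \tensor b \mapsto [ab]$. The first relation is sent to the tautology $[ab'b''] = [ab'b'']$, while the second is sent to $[c'ab''] = [ab''c']$, which holds in the quotient because $c'(ab'') - (ab'')c'$ is a commutator. Hence $\Phi$ descends to the quotient.

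Next I would define $\Psi \colon A/[A,A] \to A \tensor_{A^e} A$ by $[a] \mapsto 1 \tensor a$. To see this is well-defined one must verify $1 \tensor ab = 1 \tensor ba$ in the relative tensor product. Applying the first relation with $(a, b', b'') = (1, a, b)$ produces $a \tensor b = 1 \tensor ab$, while applying the second relation with $(c', a, b'') = (a, 1, b)$ produces $a \tensor b = 1 \tensor ba$; chaining these gives $1 \tensor ab = 1 \tensor ba$, so commutators are killed.

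Finally, $\Phi \circ \Psi = \id$ is immediate from $[1 \cdot a] = [a]$, and $\Psi \circ \Phi(a \tensor b) = 1 \tensor ab = a \tensor b$ by invoking the first relation once more; naturality in $A$ is automatic since both sides of the claimed isomorphism are functorial bimodule constructions applied to $A$. The only real subtlety is bookkeeping: fixing the handedness of the two $A^e$-module structures so that the resulting presentation relations correspond on the nose to the defining relations of the cocenter. Once the conventions are pinned down, the isomorphism is essentially a one-line relation chase in the quotient.
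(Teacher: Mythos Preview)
Your proof is correct and complete: the module conventions are consistent, the two relations you extract from $A^e = A \tensor_\kk A^{op}$ are exactly the defining relations of the relative tensor product, and the maps $\Phi$ and $\Psi$ are visibly inverse. The paper does not supply its own proof of this proposition---it is left as an exercise---so there is nothing to compare; your argument is the standard and expected one.
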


\begin{remark}\label{remark. A is bimodule over itself}
To make sense of the tensor product, note that the algebra $A$ is naturally an $(A,A)$-bimodule by left and right multiplication. This means $A$ can be considered both a right- and left-module over the ring $A\otimes_{\kk} A^{op}$, where $A^{op}$ is the algebra with the opposite multiplication.
\index{$A^{\op}$}
\index{opposite algebra}
By {\em naturality}, we mean that any unital $\kk$-algebra map $A \to B$ fits into a commutative diagram
	\eqnn
	\xymatrix{
	A/[A,A] \ar[r] \ar[d] &A\otimes_{A\otimes_{\kk} A^{op}}A\ar[d]\\
	B/[B,B] \ar[r]  & B\otimes_{B\otimes_{\kk} B^{op}}B.
	}
	\eqnd
\end{remark}

\begin{remark}
The proof is left as Exercise~\ref{exercise:cocenter}.
If you have never proven this on your own, I highly encourage you to do it.
\end{remark}

We will come back to the expression in Proposition~\ref{prop:cocenter} by the end of this lecture.

\section{$(\infty,1)$-categories, a first pass} 

\index{$(\infty,1)$-category}

Let us return to~\eqref{item:dissatisfied-isotopy} of Remark~\ref{remark: dissatisfaction}. What to do with isotopies?

The frustration of~\eqref{item:dissatisfied-isotopy} might inspire us to contemplate a category whose collection of morphisms has a topology. If our invariants are functors that respect these topologies, we have a hope of seeing the topology of the space of embeddings from $S^1$ to $S^1$, and hence these non-trivial rotational isotopies.

We will talk more about this next lecture, but let's use this motivation to talk a bit about $(\infty,1)$-categories.

On a first pass, you can think of an $(\infty,1)$-category as having a collection of objects and, for each pair of objects $(X_0,X_1)$, a \emph{topological space} of morphisms $\hom(X_0,X_1)$ (instead of a mere set of morphisms). We demand that the composition maps
	\eqnn
	\hom(X_1,X_2) \times
	\hom(X_0,X_1) \to
	\hom(X_0,X_2) ,
	\qquad
	(f_{12},f_{01}) \mapsto f_{12} \circ f_{01}
	\eqnd
be continuous.  (See Remark~\ref{remark. infinity 1 terminology} for why one might call this an $(\infty,1)$-category.)

For example, one can endow the collection of smooth, orientation-preserving embeddings
	\eqnn
	\{j: X \to Y\}
	\eqnd
with a topology---e.g., the weak Whitney $C^\infty$ topology. Composition of embeddings is a continuous operation. In this way, we can define

\begin{definition}
We let
	\eqnn
	\disk_{1,\orr}
	\eqnd 
\index{$\disk_{1,\orr}$}
denote the $(\infty,1)$-category whose objects are (disjoint unions of) oriented 1-dimensional open disks, and whose morphism spaces are given by the space of smooth, orientation-preserving embeddings.
\end{definition}

\begin{warning-numbered}
There is a font difference between $\diskdisc$ (the category from before) and $\disk$ (the $(\infty,1)$-category, which sees the topology of embedding spaces).
\end{warning-numbered}

\begin{definition}[Homotopy]
\index{homotopy in an $(\infty,1)$-category}
Given an $(\infty,1)$-category $\cC$ and two objects $X,Y$, fix two morphisms $f_0,f_1\in \hom_{\cC}(X,Y)$. A continuous path from $f_0$ to $f_1$ in the space $\hom_{\cC}(X,Y)$ is called a {\em homotopy} from $f_0$ to $f_1$. 
\end{definition}

\begin{warning-numbered}
Consider the example of $\disk_{1,\orr}$. Confusingly, a homotopy is not the same thing as a smooth isotopy, as one might choose a continuous path in $\hom_{\disk_{1,\orr}}$ which does not give rise to a \emph{smooth} isotopy; regardless, by smooth approximation, two morphisms are homotopic if and only if they are smoothly isotopic as embeddings. 
\end{warning-numbered}

\begin{warning-numbered}
You may recognize that what I have called an ``$(\infty,1)$-category'' in this section is really a ``topologically enriched category.'' This is one model for an $(\infty,1)$-category. However, Joyal's model of $(\infty,1)$-categories (often called quasi-categories, weak Kan complexes, or simply $\infty$-categories) is often much better behaved, and allows the construction of far more sophisticated examples of $(\infty,1)$-categories. See~\ref{section. quasi categories}.
\end{warning-numbered}

For now, you can think of a functor between $(\infty,1)$-categories as a functor in the usual sense, and for which the maps between morphism spaces are \emph{continuous}. 

What are other examples of $(\infty,1)$-categories? 

\begin{example}
Every ordinary category is an $(\infty,1)$-category---we simply treat the set of morphisms as a discrete topological space. 
\end{example}

\begin{example}
As a sub-example, the category of vector spaces $\Vect_{\kk}$ is an $(\infty,1)$-category with discrete morphism spaces. That is, two linear maps $f: V \to W$ are homotopic if and only if they are equal.
\end{example}

\begin{example}[Chain complexes]\label{example:Dold-Kan}
\index{Dold-Kan}
We are now going to sketch the idea of an $(\infty,1)$-category
	\eqnn
	\chain_{\kk}
	\eqnd
\index{$\chain_{\kk}$}
of cochain complexes over a field $\kk$. Its objects are cochain complexes, and for now we will content ourselves with only sketching the space of morphisms. To this end, fix two cochain complexes $V$ and $W$. 
One can construct a space $\hom(V,W)$ which is combinatorially defined---i.e., built of simplices:
\begin{itemize}
\item Vertices of $\hom(V,W)$ are usual morphisms of chain complexes---that is, maps $f: V \to W$ such that $df = fd$.
\item An edge is the data of a triplet $(f_0,f_1,H)$ where $f_0,f_1$ are vertices and $H$ is a chain homotopy, i.e. a degree -1 map $H: V \to W$ satisfying $dH + Hd=f_1-f_0$.
\item Simplices of dimension $k$ are degree $-k$ maps exhibiting homotopies between homotopies. For example, a triangle is the data 
	\eqnn
	(f_0,f_1,f_2, H_{01},H_{02},H_{12},G)
	\eqnd
where the $H_{ij}$ are homotopies from $f_i$ to $f_j$, and $G$ is a degree -2 map $G: V \to W$ exhibiting a homotopy between $H_{02}$ and $H_{12}+H_{01}$.
\end{itemize}
This space is called the \emph{Dold-Kan space} 
\index{Dold-Kan}
of the usual hom cochain complex $Hom^{\bullet}(V,W)$. We won't talk much about it, though we will talk a little more in the next lecture about the general philosophy of what algebraically motivated $(\infty,1)$-categories look like. 

For more on Dold-Kan, the interested reader may consult III.2 of~\cite{goerss-jardine}. Original sources are~\cite{dold-1958} and~\cite{kan-1958-css}.
\end{example}

\begin{remark}
For now, I'd like to say that this combinatorially defined space has well-understood homotopy groups:
	\eqnn
	\pi_i(\hom_{\chain_\kk}(V,W))) \cong H^{-i}(Hom^\bullet(V,W)),
	\qquad
	i \geq 0.
	\eqnd
That is, there is a natural isomorphism between the non-positive cohomology groups of the cochain complex $Hom^\bullet$ and the homotopy groups of the space $\hom$.
\end{remark} 

The upshot being---in a way I'll elaborate on next lecture---one should think of a functor of $(\infty,1)$-categories
	\eqnn
	\disk_{1,\orr} \to \chain_{\kk}
	\eqnd
as assigning a chain complex to each object of $\disk_{1,\orr}$, a chain map to every embedding, and a {\em chain homotopy} to every isotopy of embeddings, higher homotopies of chain complexes to higher homotopies of isotopies, and so forth.

\begin{remark}
You may note the {\em topological} definition of hom-spaces in $\disk_{1,\orr}$, and the {\em combinatorial} definition of hom-spaces in $\chain_{\kk}$. For more on these flavors, see Remark~\ref{remark.oo-cat algebraic and combinatorial}
\end{remark}

One can also speak of symmetric monoidal $(\infty,1)$-categories; we don't define these here, but you should stick with the intuition that these are symmetric monoidal categories whose symmetric monoidal structures are continuous with respect to the morphisms. 

\begin{example}
$\disk_{1,\orr}^{\coprod}$ is a symmetric monoidal $(\infty,1)$-category with symmetric monoidal structure given by disjoint union.
\end{example}

\section{Algebra of disks, revisited} 

\begin{definition}
\index{$\disk_{1,\orr}$-algebra}
Fix a symmetric monoidal $(\infty,1)$-category $\cC^\tensor$.
A {\em $\disk_{1,\orr}$-algebra} in $\cC^\tensor$ is a symmetric monoidal functor
	\eqnn
	F: \disk_{1,\orr}^{\coprod} \to \cC^\tensor.
	\eqnd
\end{definition}

\begin{example}
Take $\cC^\tensor = \Vect_{\kk}^{\otimes}$ as the target category. Because this is a discrete category, the conclusion of Theorem~\ref{theorem.associativity} holds verbatim: The data of a symmetric monoidal functor $F$ is equivalent to the data of a unital associative $\kk$-algebra. Indeed, the condition that ``isotopic embeddings must be sent to the same linear maps'' from before is re-expressed as our functor $F$ being a functor of $(\infty,1)$-categories. 
\end{example}

Let us next consider the $(\infty,1)$-category $\chain_{\kk}$ with symmetric monoidal structure given by the usual tensor product $\tensor_{\kk}$. \footnote{It is important here that we are working over a field $\kk$; otherwise we would take the derived tensor product, but this would lead us afield, pun intended.} As before, let us set $A = F(\RR)$.

Then for {\em every} embedding $j: \RR^{\coprod 2} \rightarrow \RR$ we have a multiplication $F(j) =: m_j: A \tensor A \to A$. The choice of $j$ is by no means canonical, and specifying all these $m_j$ is indeed an enormous amount of data. But the space of order-preserving\footnote{In the sense we used in the proof of Theorem~\ref{theorem.associativity}} $j$ is contractible. So $m_j$ is determined up to contractible choice. We will come back to this in the next lecture; for now, you should imagine that this data is manageable---in fact, applying a contraction to a point, you should imagine that specifying all the $\{m_j\}_j$ continuously is tantamount to just producing a single $m$.

But what to make of associativity of $m_j$? Tracing through the same proof as before, we find that the square \eqref{eqn:associativity}  is commutative only up to chain homotopy. That is, the data of the functor $F$ is not supplying a multiplication which is associative on the nose, but only associative up to homotopy, and these homotopies are specified (by the images of isotopies of disks). 

You can now imagine that the data of the spaces of embeddings
	\eqnn
	\RR \coprod \ldots \coprod \RR \to \RR
	\eqnd
supplies even further complicated data. But in fact, the space of these embeddings is also contractible up to swapping components of the domain. All told, it turns out that the immense amount of data specifying an algebra ``associative up to specified homotopies'' has a name already:

\begin{theorem}
\index{$A_\infty$-algebra}
The data of a functor between $(\infty,1)$-categories $F: \disk_{1,\orr}^{\coprod} \rightarrow \Chain_{\kk}^{\otimes_{\kk}}$ is equivalent to the data of a unital $A_{\infty}$-algebra.

More precisely, there is an equivalence of $(\infty,1)$-categories  between the $(\infty,1)$-category of symmetric monoidal functors $F$, and the $(\infty,1)$-category of unital $A_\infty$-algebras.
\end{theorem}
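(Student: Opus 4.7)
The plan is to mirror the proof of Theorem~\ref{theorem.associativity}, keeping track of the full topology of embedding spaces rather than collapsing isotopies. As before I set $A := F(\RR)$; symmetric monoidality forces $F(\RR^{\coprod k}) \simeq A^{\tensor k}$. The decisive new geometric input is the homotopy type of the oriented embedding spaces: for each $k \geq 0$, the space $\Emb^{\orr}(\RR^{\coprod k},\RR)$ deformation retracts onto the ordered configuration space $\mathrm{Conf}_k(\RR)$, which has $k!$ contractible components indexed by the induced orderings of the image intervals.

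Applying functoriality of $F$ in the $(\infty,1)$-sense, I get for every $k$ a continuous map
\eqnn
\mathrm{Conf}_k(\RR) \;\simeq\; \Emb^{\orr}(\RR^{\coprod k},\RR) \longrightarrow \hom_{\Chain_{\kk}}(A^{\tensor k},A).
\eqnd
Restricted to the order-preserving component (which is contractible), this selects, up to contractible choice, a single chain map $m_k : A^{\tensor k} \to A$; paths to the other $k!-1$ components encode the symmetric-monoidal coherences, and $m_2$ recovers the familiar multiplication from the discrete case. More subtly, compositions of nested embeddings of disks into disks give, upon applying $F$, compatibilities among the $m_k$. At the level of $\pi_0$ these compatibilities produce strict associativity of $m_2$; the next homotopy dimension furnishes a chain homotopy $m_3$ between $m_2 \circ (m_2 \tensor \id)$ and $m_2 \circ (\id \tensor m_2)$; and higher-dimensional strata of the relevant embedding spaces furnish the full tower of Stasheff associahedra, which is exactly the data of a unital $A_\infty$-algebra.

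The main obstacle is making precise the structural claim underlying this argument: the symmetric monoidal $(\infty,1)$-category $\disk_{1,\orr}^{\coprod}$ is equivalent to the symmetric monoidal envelope of the $E_1$-operad, namely the $\infty$-operad whose $k$-ary operation space is $\mathrm{Conf}_k(\RR)$. This requires not just the homotopy equivalence between embedding spaces and configuration spaces, but also a verification that gluing of embeddings corresponds to operadic composition up to coherent higher homotopy, for which one needs a robust model for $\infty$-operads such as Lurie's. Granting this identification, the universal property of the envelope reduces the theorem to the classical equivalence between $E_1$-algebras in $\Chain_{\kk}$ and unital $A_\infty$-algebras, namely algebras over Stasheff's associahedral operad.

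For the reverse direction, given a unital $A_\infty$-algebra $A$ I construct $F$ by setting $F(\RR^{\coprod k}) := A^{\tensor k}$ and using the operations $m_k$ and their coherences to define $F$ on morphism spaces; a general oriented embedding $\RR^{\coprod k} \to \RR^{\coprod l}$ is classified, up to contractible choice, by a planar rooted forest encoding which input disks land in which output disk, and its image is the corresponding iterated composition of $m_j$'s. Upgrading the whole discussion to natural transformations, which will correspond to $A_\infty$-morphisms of $A_\infty$-algebras, then yields the asserted equivalence of $(\infty,1)$-categories.
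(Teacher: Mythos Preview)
The paper does not actually prove this theorem: it is stated without proof, preceded only by the informal discussion that the embedding spaces $\Emb^{\orr}(\RR^{\coprod k},\RR)$ are contractible on each order-preserving component, so that the associativity square~\eqref{eqn:associativity} commutes only up to specified chain homotopy. Your proposal is therefore not competing against a proof in the paper but rather fleshing out the sketch the paper leaves implicit.

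Your overall strategy is the right one and is the standard route: identify $\disk_{1,\orr}^{\coprod}$ with the symmetric monoidal envelope of the $E_1$-operad (this is exactly the content of Lurie's definition of $\EE_n$-algebras, cf.\ Section~5.1 of~\cite{higher-algebra}), and then invoke the classical equivalence between $E_1$-algebras and unital $A_\infty$-algebras in $\Chain_\kk$. You are also honest that the first identification is where the real work lies, and that it requires operadic machinery beyond the scope of the lecture.

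One point of confusion worth cleaning up: you first introduce $m_k : A^{\tensor k} \to A$ as the \emph{chain map} selected by the contractible order-preserving component of $\Emb^{\orr}(\RR^{\coprod k},\RR)$, and then a few lines later reuse the symbol $m_3$ for the \emph{chain homotopy} witnessing associativity. These are different objects: the chain maps you first extract are all iterated compositions of $m_2$ (so carry no new information), while the genuine $A_\infty$-operations $m_k$ for $k\geq 3$ have degree $2-k$ and arise from the higher cells of the embedding spaces, not from their vertices. The Stasheff associahedra you invoke live not in the individual spaces $\Emb^{\orr}(\RR^{\coprod k},\RR)$ (whose components are contractible) but in the compatibility between operadic composition and the contractions you have chosen. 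Keeping these roles separate will make the argument cleaner.
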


\begin{remark}
\index{$A_\infty$-algebra}
The word ``$A_\infty$'' is not important at all for what we will speak of next. I just wanted to emphasize that a $\disk_{1,\orr}$-algebra is simply an algebra whose multiplication is associative up to certain specified homotopies and higher homotopies; and if this is too much, for now, you will lose little intuition imagining these algebras to be actually associative. 
\end{remark}

\begin{example}
\index{$A_\infty$-algebra}
It is a classical fact that any unital $A_\infty$-algebra in $\Chain_{\kk}^{\tensor_\kk}$ is equivalent to an associative, unital dg-algebra.\footnote{This follows, for example, by embedding an $A_\infty$-algebra into its category of modules through the Yoneda embedding. See for example Section~(2g) of~\cite{seidel-book}.} In particular, unital dg-algebras are examples of $\disk_{1,\orr}$-algebras. Examples of such algebras include the de Rham cochains of a manifold (which in fact form a commutative dg algebra) and the endomorphism hom-complex of a cochain complex. 
\end{example}

\begin{example}[Maps of $A_\infty$-algebras.]\label{example:Aoo-maps}
\index{$A_\infty$-algebra}
However, you will {\em gain} something if you become accustomed to the fact that maps between these algebras need not respect multiplication and associativity on the nose. (This is also visible if one contemplates what a natural transformation between two symmetric monoidal functors $\disk_{1,\orr}^{\coprod} \to \chain_{\kk}^{\tensor_\kk}$ looks like.) For example, an $A_\infty$-algebra map between two dg-algebras is {\em not} the same thing as a map of dg algebras.

Just to give you a feel for what an $A_\infty$-algebra map $f:A \to B$ between two dg-algebras might look like, let us say that an $A_\infty$-algebra map is not simply a map 
	\eqnn
	f_1: A \to B
	\eqnd
satisfying the equation
	\eqnn
	f_1(da) = d f_1(a)
	\qquad
	\text{for all $a \in A$}.
	\eqnd
An $A_\infty$-algebra map also contains the data of a map
	\eqnn
	f_2: A \tensor A \to B[-1]
	\eqnd
satisfying
	\eqn\label{eqn:Aoo-map-quadratic}
	f_1(m(a_2,a_1))
	-
	m(f_1 a_2, f_1 a_1)
	=
	\pm
	f_2(da_2,a_1)
	+ \pm f_2 (a_2,da_1)
	+ \pm d f_2(a_2,a_1).	
	\eqnd
Note that~\eqref{eqn:Aoo-map-quadratic} states that $f_1$ does not necessarily respect multiplication on the nose, but the failure to do so is controlled by an exact element in the hom cochain complex $Hom^\bullet(A \tensor A, B)$ (as expressed on the righthand side). By definition, an $A_\infty$-algebra map also comes equipped with maps $f_k: A^{\tensor k} \to B[k-1]$ satisfying higher analogues of~\eqref{eqn:Aoo-map-quadratic} that cohere associativity properties up to homotopy and higher homotopies.

For more on $A_\infty$-algebras, we refer the interested reader to sources such as~\cite{keller-A-infinity-algebras-modules}.
\end{example}

\begin{remark}\label{remark:oo-category-functors-need-not-respect-composition}
The above is also a useful principle to keep in mind for $(\infty,1)$-categories. While a single $(\infty,1)$-category may be thought of as a category enriched in topological spaces, it is of course the maps and equivalences between them that make $(\infty,1)$-categories interesting. In particular, while any functor between $(\infty,1)$-categories can be modeled appropriately by an actual functor between topologically enriched categories, it is highly helpful to think instead of functors as lacking a ``strict'' respect for compositions, but equipped with higher coherences that ``make up'' for the lack of strict respect. 
\end{remark}

\section{Factorization homology of the circle} 
Now we are ready to define an invariant of $S^1$. 

\begin{definition}
\index{$\mfld_{1,\orr}^{\coprod}$}
Denote by $\mfld_{1,\orr}^{\coprod}$ the $(\infty,1)$-category of oriented, one-dimensional manifolds with finitely many connected components. The morphism space
	\eqnn
	\hom_{\mfld_{1,\orr}^{\coprod}}(X,Y)
	\eqnd
is the space of smooth, orientation-preserving embeddings from $X$ to $Y$.
\end{definition}

\begin{remark}
There is an obvious symmetric monoidal inclusion functor $\disk_{1,\orr}^{\coprod} \rightarrow \mfld_{1,\orr}^{\coprod}$. The superscript $\coprod$ in both notations is meant to remind the reader of that both $(\infty,1)$-categories are equipped with a symmetric monoidal structure.
\end{remark}

\begin{definition}
\index{factorization homology}
\index{left Kan extension}
Let $\cC^{\otimes}$ be given by $\Vect_{\kk}^{\tensor_{\kk}}$ or by $\Chain_{\kk}^{\otimes_{\kk}}$.
Let $A$ be a $\disk_{1,\orr}$-algebra in  $\cC^{\otimes}$. The \emph{factorization homology} of $A$ is the left Kan extension    
    \begin{center}
        \begin{tikzcd}
        \disk_{1,\orr}^{\coprod} \arrow[r,"A"] \arrow[d,hook] & \cC^{\otimes}\\
        \mfld_{1,\orr}^{\coprod}\arrow[ur,dashed,"\int A"'] & 
        \end{tikzcd}
    \end{center} 
and we denote this functor by $\int A$ as in the diagram. Given a smooth, oriented 1-dimensional manifold $X \in \mfld_{1,\orr}$, we denote the value of the left Kan extension by
	\eqnn
	\int_X A
	\eqnd
and we call this the factorization homology of $X$ with coefficients in $A$.
\end{definition}

Let me not tell you what left Kan extension exactly is, for the time being. (See Section~\ref{section: left kan extension}.) But let me tell you one theorem we can prove about this left Kan extension:

\begin{theorem}[$\tensor$-excision for $S^1$]\label{theorem: circle excision}
Fix a $\disk_{1,\orr}$-algebra $A$ in $\cC^\tensor$ and fix an orientation on $S^1$. Then factorization homology of the circle admits an equivalence
	\eqn\label{eqn:circle-excision}
	\int_{S^1} A
	\simeq
	A \bigotimes_{A \tensor A^{\op}} A.
	\eqnd
\end{theorem}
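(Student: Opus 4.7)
My approach is to unfold the defining left Kan extension of $\int_{S^1} A$ and to compute it using a \v{C}ech-style cover of the circle by two open arcs. The pointwise formula for the left Kan extension presents
\[
\int_{S^1} A \;\simeq\; \colim_{(D \hookrightarrow S^1) \in (\disk_{1,\orr})_{/S^1}} A(D).
\]
Rather than handling the whole slice $(\disk_{1,\orr})_{/S^1}$, I would decompose $S^1 = U_1 \cup U_2$ with each $U_i \cong \RR$ and with $U_1 \cap U_2 \cong \RR \coprod \RR$; call the two intersection components $V_L$ and $V_R$. The first main step is a cofinality argument showing that the colimit above may be computed instead as the realization of the two-sided bar-type simplicial diagram whose face maps are induced by the two inclusions $V_L \coprod V_R \hookrightarrow U_i$. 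Cofinality rests on the contractibility of the space of embeddings of any disjoint union of disks into a fixed open subset of $S^1$, so that every disk configuration in $S^1$ refines, up to contractible choice, to one lying entirely in $U_1$ or entirely in $U_2$.

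Next, I would evaluate each term and identify the two parallel arrows. Since $U_i \cong \RR$ is a disk, $\int_{U_i} A \simeq A$; since $A$ is symmetric monoidal, $\int_{V_L \coprod V_R} A \simeq A \tensor A$. Choosing parametrizations so that $V_L$ sits to the left of $V_R$ inside $U_1$, the map induced by $V_L \coprod V_R \hookrightarrow U_1$ is the ordinary multiplication $m : A \tensor A \to A$. For the inclusion into $U_2$, traversing the opposite arc of $S^1$ swaps the cyclic order of the two components, so the induced map is $m \circ \sigma$, i.e.\ the multiplication of $A^{\op}$. Together these equip $A$ with its standard $(A,A)$-bimodule structure, equivalently with the $A \tensor A^{\op}$-module structure of Remark~\ref{remark. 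A is bimodule over itself}. The simplicial diagram is therefore the two-sided bar construction $B_\bullet(A, A \tensor A^{\op}, A)$, whose realization models the (derived) relative tensor product $A \tensor_{A \tensor A^{\op}} A$. This yields~\eqref{eqn:circle-excision}; in the discrete case $\cC = \Vect_\kk$ the realization collapses to the coequalizer $A/[A,A]$, confirming the guess of Section~\ref{section.circle-guess} via Proposition~\ref{prop:cocenter}.

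The hardest step will be the cofinality reduction: replacing the huge slice $(\disk_{1,\orr})_{/S^1}$ by the small bar diagram requires a genuine $(\infty,1)$-categorical cofinality statement along the lines of Quillen's Theorem A, which in turn rests on the contractibility of embedding spaces $\Emb(\RR^{\coprod k}, U)$ for $U$ a disjoint union of open intervals in $S^1$. A secondary but essential subtlety is the orientation bookkeeping in step two: it is precisely the fact that the \emph{other} arc of $S^1$ reverses the left-right convention inherited from $U_1$ that produces $A^{\op}$ rather than a second copy of $A$ on the right-hand side of~\eqref{eqn:circle-excision}, and hence (in the discrete case) the commutator quotient rather than a symmetric tensor product.
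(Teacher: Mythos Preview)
The paper does not prove this theorem directly. It is stated in Chapter~1 without proof, and then in Chapter~3 the general $\tensor$-excision Theorem~\ref{theorem: excision} is stated (with proofs deferred to~\cite{ayala-francis-topological,aft-2}); the circle case is then assigned as Exercise~3.47, whose point is precisely the orientation bookkeeping you carry out in your second step. So there is no in-house argument to compare against: your proposal is essentially a reconstruction, in the special case $X=S^1$, of the proof that the paper outsources to the references. That is a perfectly good approach, and your identification of the two multiplications as $m$ and $m\circ\sigma$ (hence the appearance of $A^{\op}$) is exactly what the exercise asks for.

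One genuine imprecision to fix in the cofinality step. The spaces $\Emb^{\orr}(\RR^{\coprod k},U)$ are \emph{not} contractible: already for $U\cong\RR$ they have $k!$ components, and for $U=V_L\coprod V_R$ even more. Relatedly, the claim that ``every disk configuration in $S^1$ refines, up to contractible choice, to one lying entirely in $U_1$ or entirely in $U_2$'' is false as stated: two small disks near opposite poles of $S^1$ cannot be pushed into a single $U_i$. What is true, and what the bar diagram is designed to capture, is that every configuration refines to one \emph{subordinate to the cover} (each component lying in some $U_i$), and the bar/\v{C}ech simplicial object records all the ways this can happen. The cofinality you need is that the \v{C}ech nerve of $\{U_1,U_2\}$ (valued in $\disk_{1,\orr}$) maps finally into $(\disk_{1,\orr})_{/S^1}$; via Quillen's Theorem~A this reduces to contractibility of certain comma categories, not of raw embedding spaces. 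In the references this is handled by showing that $(\disk_{n})_{/X}$ is sifted and that the relevant comma categories over a fixed configuration are contractible. Your plan is on the right track, but sharpen the hypothesis you actually need before invoking Theorem~A.
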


\begin{remark}
Let $A$ be an associative $\kk$-algebra. 
As we saw in Remark~\ref{remark. A is bimodule over itself}, $A$ is a bimodule over itself, and in particular, an $A \tensor A^{\op}$-module (on the right, or on the left). This explains the righthand side of~\eqref{eqn:circle-excision} when $\cC^{\tensor} = \Vect_{\kk}^{\tensor_\kk}$.
\end{remark}

\begin{example}
If $A$ is a unital associative algebra in $\cC^\tensor = \Vect_{\kk}^{\otimes_{\kk}}$, we already guessed that $\int_{S^1} A \simeq A/[A,A]$ in Section~\ref{section.circle-guess}.
\end{example}

\begin{example}\label{example:hochschild-complex}
Let $\cC^\tensor = \Chain_{\kk}^{\otimes_{\kk}}$. 
To make sense of~\eqref{eqn:circle-excision} when $\cC^{\tensor} = \chain_{\kk}^{\tensor_{\kk}}$, let us simply state that the notion of (bi)modules makes sense for $A_\infty$-algebras as well, and the notion of tensoring modules over an algebra also makes sense, for instance by articulating a model for the bar construction. 

However, let us caution the uninitiated that the bar construction models the derived tensor product:
	\eqnn
	\int_{S^1}A \simeq A \bigotimes^{\mathbb{L}}_{A\otimes_{\kk} A^{op}} A.
	\eqnd
This tensor product already has a name: It is the {\em Hochschild chain complex} of $A$. (See~\cite{hochschild-1945} and~\cite{cartan-eilenberg-56}.)
\index{Hochschild chain complex}
\end{example}

\begin{remark}
Let $A$ be an ordinary unital associative $\kk$-algebra, concentrated in degree 0. Then one may consider $A$ to be a $\disk_{1,\orr}$-algebra in $\cC^{\tensor} = \Vect_{\kk}^{\tensor_{\kk}}$ and in $\cC^{\tensor} = \chain_\kk^{\tensor_{\kk}}$. Then the bar construction $A \tensor_{A \tensor A^{\op}} A$ constructed in $\cC = \Vect_{\kk}$ yields a vector space given by the 0th cohomology of the Hochschild complex, otherwise known as $A/[A,A]$. On the other hand, the bar construction constructed in $\cC = \chain_\kk$ encodes more homotopically rich information, giving rise to the entire Hochschild chain complex of $A$.
\end{remark}

We mentioned at some point that the circle invariant naturally possesses an $S^1$-action (more correctly, an action of the orientation-preserving diffeomorphism group $Diff^+(S^1)$; and this group is homotopy equivalent to $S^1$). Assume that $A$ is a smooth algebra over a perfect field $\kk$. Then there is a \emph{Hochschild-Kostant-Rosenberg isomorphism}~\cite{HKR}
\index{Hochschild-Kostant-Rosenberg}
	\eqnn
	H^{-i}(\mathrm{Hochschild\ complex}) \cong \Omega^i(A),
	\qquad
	i \geq 0
	\eqnd
where $\Omega^i(A)$ is the space of algebraic de Rham $i$-forms. The latter can be equipped with the de Rham differential, and this is precisely the circle action in this case. We will elaborate on this in the start of next lecture.

\section{Exercises}

\begin{exercise}
Improve upon Theorem~\ref{theorem.associativity} by exhibiting an equivalence of categories
	\eqnn
	\Fun^{\tensor}(\diskdisc_{1,\orr}^{\coprod} , \Vect_{\kk}^{\tensor_\kk}) \simeq
	\Ass\Alg(\Vect_{\kk}^{\tensor_{\kk}}).
	\eqnd
between the category of symmetric monoidal functors $\diskdisc_{1,\orr}^{\coprod} \to \Vect_{\kk}^{\tensor_\kk}$ and the category of unital associative $\kk$-algebras.
\end{exercise}

\begin{exercise}\label{exercise:cocenter}
Verify Proposition~\ref{prop:cocenter}.
\end{exercise}

\begin{exercise}
Let $\cC^\otimes = \Cat^\times$ be the $(\infty,1)$-category of categories. Its objects are categories, and given two objects $D, E$ we define the hom-space combinatorially as follows. We let $\hom_\Cat(D,E)$ have vertices given by functors $D\rta E$, edges natural isomorphisms, triangles commutative triangles of natural isomorphisms, and $k$-simplices commutative diagrams of natural isomorphisms in the shape of $k$-simplices. (Though we call $\Cat^\times$ an $(\infty,1)$-category for the purposes of this problem, it is most naturally a 2-category with invertible 2-morphisms.)

Show that a $\disk_{1,\orr}$-algebra in $\Cat^\times$ is equivalent to a (unital) monoidal category.
\end{exercise}

\begin{exercise}\label{exercise: no orientations}
Let $\disk_1$ be the category of 1-disks without any orientation condition on morphisms. Then the space of embeddings
	\eqnn
		\Emb(\bb{R},\bb{R})\simeq O(1)\simeq S^0
	\eqnd  
is no longer contractible. Given a symmetric monoidal functor
	\eqnn
		F:\disk_1^{\coprod} \rta\Vect_{\kk}^{\tensor_{\kk}}
	\eqnd 
let $A=F(\bb{R})$. We get a map $\mrm{Id}:A\rta A$ and  a map $\tau:A\rta A$ from orientation reversal. 

How does $\tau$ interact with the multiplication map?

Note that now there are two distinct isotopy classes of maps $S^1 \to S^1$; one of them contains an orientation-reversing diffeomorphism. Can you describe the induced map on cocenters?
\end{exercise}

\chapter{Interlude on $(\infty,1)$-Categories}
\label{chapter. infty cats}
\index{$(\infty,1)$-category}

If the reader does not care for this chapter, they may soon find out, and they may just as soon skip to the next section. Here we introduce various models of $(\infty,1)$-categories. When discussing the model of quasi-categories (otherwise known as weak Kan complexes, or $(\infty,1)$-categories), we emphasize the importance of either {\em not having} a notion of composition inside a category, or of a functor {\em not ``respecting'' on the nose} some notion of composition between a domain and codomain $(\infty,1)$-category.

We recommend~\cite{goerss-jardine, bergner-survey, lurie-htt, joyal} for more details on the contents of this chapter.

\begin{remark}\label{remark. infinity 1 terminology}
Let us remark on where the term $(\infty,1)$ arises. In general, higher category theory may prompt us to study $(m,n)$-categories---these are categories who have higher morphisms up to order $m$, and for whom all morphisms of order $>n$ are invertible. When $m= \infty$ and $n=1$, what we obtain is a category where one may ask for maps between (1-)morphisms, maps between these, and so forth, but wherein all of these maps are actually invertible. It is customary, and useful, to think of these maps and higher maps as homotopies and higher homotopies.

When one is given a category enriched in topological spaces, one can think of the points of morphisms spaces to be the 1-morphisms, paths between these to encode the (invertible) 2-morphisms (invertible because any path admits an inverse up to homotopy), and so forth.
\end{remark}

\section{Homotopy equivalences, and equivalences of $(\infty,1)$-categories}
Recall that last time, we told you to think of an $(\infty,1)$-category $\cC$ as  a category such that for every pair  of objects $x,y\in\cC$, the collection of maps $\hom_{\cC}(x,y)$ is a topological space and composition is continuous. Such data is usually called a \emph{topologically enriched category}, and this is one way you can think about what an $(\infty,1)$-category is.
\index{topologically enriched category}

In this model, a \emph{functor} $F:\cC\rta\cal{D}$ of $(\infty,1)$-categories is a functor in the usual sense with the property that the induced maps
	\eqnn
		\hom_\cC(x,y)\rta\hom_{\cal{D}}(Fx,Fy)
	\eqnd 
are continuous.

In an $(\infty,1)$-category $\cC$, one can talk about equivalences of objects.

\begin{definition}\label{defn. equivalence in oo-cat}
An \emph{equivalence} $x\rta y$ between objects $x,y$ in an $(\infty,1)$-category $\cC$ is the data of a map $f:x\rta y$ such that there exists a map $g:y\rta x$ and homotopies $f\circ g\simeq \mrm{Id}$ and $g\circ f\simeq \mrm{Id}$.
\index{equivalence!in an $(\infty,1)$-category}
\end{definition}

\begin{definition}
A functor $F:\cC\rta\cal{D}$ of $(\infty,1)$-categories is called \emph{essentially surjective} if, up to equivalence of objects (Definition~\ref{defn. equivalence in oo-cat}), every object of $\cal{D}$ is in the image of $F$. 
\end{definition}

For reasons that are not always obvious at your first rodeo, it turns out that the question ``what are $(\infty,1)$-categories?'' is just as important as the question ``when are two $(\infty,1)$-categories equivalent?''

\begin{definition}\label{remark:equivalence-of-infty-cats}

A functor $F:\cC\rta\cal{D}$ of $(\infty,1)$-categories is an \emph{equivalence} if $F$ is essentially surjective, and the induced maps
	\eqnn
		\hom_\cC(x,y)\rta\hom_{\cal{D}}(Fx,Fy)
	\eqnd 
are weak homotopy equivalences.
\index{equivalence!of $(\infty,1)$-categories}
\end{definition}

\begin{remark}
Recall that a map is a weak homotopy equivalence if it induces a bijection of connected components and isomorphisms on all homotopy groups. That is, a continuous map of topological spaces $g: X \to Y$ induces functions
	\eqnn
	\pi_0(X) \to \pi_0(Y),
	\qquad
	\pi_1(X) \to \pi_1(Y),
	\qquad
	\pi_2(X) \to \pi_2(Y), \qquad
	\ldots
	\eqnd
and we say $g$ is a weak homotopy equivalence if these maps are bijections $\pi_i(X) \to \pi_i(Y)$ for all $i \geq 0$. Of course, for $i \geq 1$, we demand that these are bijections for any choice of connected component of $X$.

As an example, if $Y$ is a contractible space, then any choice of map $\ast \to Y$ is a weak homotopy equivalence, and the unique map $Y \to \ast$ is also a weak homotopy equivalence.
\end{remark}

\begin{remark}
One should think of the notion of a weak homotopy equivalence of spaces as analogous to the definition of  quasi-isomorphisms for chain complexes. It is not always true that weak homotopy equivalences can be inverted, even up to homotopy (nor can quasi-isomorphisms). Regardless, weak homotopy equivalences (and quasi-isomorphisms) induce isomorphisms on the most tractable algebraic invariants we have: homotopy groups of spaces (and cohomology groups of chain complexes). 

And just as for chain complexes, one must have some technology to really consider equivalent objects to behave as though they are equivalent---for example, if $f: \cC \to \cD$ is an equivalence of $(\infty,1)$-categories, we might demand a functor $g: \cD \to \cC$ exhibiting some notion of invertibility of $f$. In homological algebra, this was classically dealt with via derived categories. For $(\infty,1)$-categories (and in more recent approaches to homological algebra), this can be dealt with through the language of model categories and various localization techniques. To this end, Bergner~\cite{bergner, bergner-survey}, Rezk~\cite{rezk-model-for-homotopy-theory}, Lurie~\cite{lurie-htt} and Joyal~\cite{joyal} construct model categories of $(\infty,1)$-categories.
%@article {MR1804411,
%    AUTHOR = {Rezk, Charles},
%     TITLE = {A model for the homotopy theory of homotopy theory},
%   JOURNAL = {Trans. Amer. Math. Soc.},
%  FJOURNAL = {Transactions of the American Mathematical Society},
%    VOLUME = {353},
%      YEAR = {2001},
%    NUMBER = {3},
%     PAGES = {973--1007},
%      ISSN = {0002-9947},
%   MRCLASS = {55U35 (18G30)},
%  MRNUMBER = {1804411},
%MRREVIEWER = {Brooke E. Shipley},
%       DOI = {10.1090/S0002-9947-00-02653-2},
%       URL = {https://doi-org.libproxy.txstate.edu/10.1090/S0002-9947-00-02653-2},
%}
\end{remark}

\section{Contractibility}

\begin{remark}\label{remark.contractibility is uniqueness}
We make a remark on contractibility. When you hear ``There is a contractible space of BLAH,'' you should think ``There is a unique BLAH.'' This is because given $BLAH$, there is a canonical map $\{ BLAH\} \rta *$ to the point, and contractibility means that this map is a weak homotopy equivalence.
\end{remark}

\begin{example}
The space of oriented embeddings from $\RR$ to itself is contractible, and contains the identity morphism.

For this reason, when we are given a functor of $(\infty,1)$-categories $\disk_{1,\orr} \to \cC$, the induced continuous map
	\eqnn
	\hom_{\disk_{1,\orr}}(\RR,\RR)
	\to
	\hom_\cC(F(\RR),F(\RR))
	\eqnd
can fruitfully be thought of as sending the identity of $\RR$ to the identity of $F(\RR)$, and as little more information. (However, as $F$ is a functor of $(\infty,1)$-categories, $F$ also exhibits a compatibility between this homotopy equivalence and composition.)
\end{example}

\section{Combinatorial models and $(\infty,1)$-categories: simplicial sets}\label{section. simplicial categories}

The comments so far are meant to make $(\infty,1)$-categories seem less foreign; if you know what categories, spaces, and weak homotopy equivalences are, you can more or less follow a conversation or discussion.

But that's just our first pass. Let's now take a second look.
% I'll try to address some more of the utility of $(\infty,1)$-categories throughout these lectures.

\begin{remark}[$(\infty,1)$-categorical practice in algebraic settings]\label{remark.oo-cat algebraic and combinatorial}
It was ``obvious'' that the discrete category $\diskdisc_{1,\orr}$ admitted a topology on its morphism set, so we were naturally led to consider the $(\infty,1)$-category $\disk_{1,\orr}$. But how do we construct $(\infty,1)$-categories in algebra? For example, it seems like a non-trivial task to put a topology on a set of chain maps.

In algebraic examples, the morphism space $\hom_\cC(x,y)$ is almost always combinatorially defined. We saw this yesterday in the Dold-Kan space: We already had a set called the chain maps, but rather than try to topologize the collection of chain maps, we added on edges (for every homotopy) and higher simplices (for homotopies between homotopies).

And in such cases, we do not construct functors by constructing truly flimsy continuous maps of topological spaces; instead, we usually maps combinatorially, by first mapping vertices to vertices, then edges to edges, and so on.   Of course, when spaces are combinatorially defined (e.g., built out of vertices, edges, and so forth) any combinatorially well-behaved function automatically induces a continuous map.
\end{remark}

\begin{example}\label{example:cdga-oo-category}
At this summer school, 
Pavel Safronov is giving lectures on Poisson structures in derived algebraic geometry. When he speaks of the $(\infty,1)$-category of cdgas, one can likewise construct a combinatorial space of maps. Given $A, B$ two cdgas, $\hom(A,B)$ has vertices given by honest cdga maps, edges given by homotopies between these (which is not just the data of a homotopy of the underlying chain maps), and so forth.
\end{example}

\begin{remark}\label{remark:singular-complex}
One of the apparently hard things about constructing $\disk_{1,\orr}$-algebras is that $\disk_{1,\orr}$ and $\Chain_\kk$ are $(\infty,1)$-categories in different ways. The former is geometric and the latter is algebraic---for example, the domain is topologized using continuous techniques while the target has combinatorially defined spaces.

Normally, we overcome this by making the more topological thing behave more combinatorially. For example, out of any space $X$, one can construct a combinatorial gadget whose vertices are points of $X$, whose edges are paths in $X$, and whose higher simplices are continuous maps from higher simplices into $X$. (The combinatorial output is called the {\em singular complex} of $X$; see for example~\cite{goerss-jardine} and Example~\ref{example. sing X}.) As in the previous remark, constructing a functor of $(\infty,1)$-categories then usually boils down to combinatorially assigning simplices to simplices.
\end{remark}

Let us make these appeals to combinatorial arguments precise. References for simplicial sets include~\cite{goerss-jardine}, while resources for $(\infty,1)$-categories include~\cite{lurie-htt} and the Appendix of~\cite{nadler-tanaka}.

\begin{definition}
Let $\Delta$ denote the category of finite, linearly ordered, non-empty sets. A morphism is a map of posets. $\Delta$ is often called the {\emph simplex category}, and an object of $\Delta$ is sometimes referred to as a \emph{combinatorial simplex}.
\index{simplices!combinatorial}
\index{simplices!category of}
\index{$\Delta$}
\index{$[n]$}
\end{definition}

\begin{remark}
Any object of $\Delta$ is isomorphic to the standard linearly ordered set $[n] = \{0< \ldots < n\}$ for some $n \geq 0$. 
\end{remark}

\begin{definition}\label{defn. simplicial set}
A \emph{simplicial set} is a functor $\Delta^{\op} \to \Set$ to the category of sets. A map of simplicial sets is a natural transformation.
\index{simplicial set}
\end{definition}

\begin{example}\label{example. sing X}
\index{$\sing(X)$}
\index{singular complex of $X$}
Let $X$ be a topological space, and $\Delta^n \subset \RR^{n+1}$ the standard $n$-simplex. That is, $\Delta^n$ is the space of points $(x_0,x_1,\ldots,x_n)$ for which $\sum x_i = 1$ and $0 \leq x_i \leq 1$.

Then $\sing(X)$, the singular complex of $X$,  is the simplicial set sending $[n]$ to the set of continuous maps $\gamma: \Delta^n \to X$. Any map of posets $f: [n] \to [m]$ induces a linear map $\Delta^n \to \Delta^m$ (determined by sending the $i$th vertex of the domain to the $f(i)$th vertex of the codomain, and extending linearly), and $\gamma \mapsto f^*\gamma$ defines the effect of $\sing(X)$ on morphisms. 
\end{example}

\begin{example}[A combinatorial model for spaces of smooth embeddings]
Fix two smooth manifolds $X$ and $Y$.
The reader will soon realize that a {\em continuous} path in the topological space of smooth embeddings $\Emb(X,Y)$ need not represent a {\em smooth} isotopy. So the previous remark's example of the singular complex construction doesn't fit the idea that an edge (in a combinatorial model for the space of smooth embeddings) should represent a smooth isotopy.

So here is yet another technique that can be used to model $\disk_{1,\orr}$ as an $(\infty,1)$-category (see for example ~\cite{aft-1,aft-2}, though this idea goes back much further). One declares a vertex to be a smooth embedding $j: X \to Y$. One declares an edge to be the data of a smooth embedding $X \times \Delta^1 \to Y\times \Delta^1$ which respects the projection to $\Delta^1$. More generally, a $k$-simplex is a smooth embedding $X \times \Delta^k \to Y \times \Delta^k$ respecting the projections to $\Delta^k$. 
\end{example}

\begin{example}
For every $n$, we let $\Delta^n = \hom(-,[n])$ denote the simplicial set represented by $[n]$. This notation conflicts with our use of $\Delta^n \subset \RR^{n+1}$ as the standard $n$-simplex, but we shall proceed as the distinction should be clear from context.
\end{example}

\begin{notation}
Let $A$ be a simplicial set. we denote the set $A([n])$ by $A_n$.

The category of simplicial sets has a symmetric monoidal structures given by direct product, where $(A\times B)_n = A_n \times B_n$. The unit is the constant simplicial set sending every $[n]$ to the terminal (i.e., one-point) set.

Also, given two simplicial sets $A$ and $B$, we can define a new simplicial set $\hom(A,B)$ by declaring $\hom(A,B)_k$ to be the collection of simplicial set maps $A \times \Delta^k \to B$. One can arrange for the category of simplicial sets to be enriched over itself.
\end{notation}

\begin{remark}[Simplicial sets model spaces]\label{remark. simplicial sets are spaces}
Given a simplicial set $A$, one may define a topological space $|A|$ built by quotienting $\coprod_{k \geq 0} A_k \times \Delta^k$ along relations given by the morphisms $[k] \to [k']$. This is called the {\em geometric realization} of $A$. In the opposite direction, we saw above the passage from a space $X$ to $\sing(X)$ in Example~\ref{example. sing X}. It is a classical fact that $\sing$ and $|-|$ define an equivalence between a homotopy theory of simplicial sets, and a homotopy theory for topological spaces. Informally, one may think of simplicial sets themselves as giving combinatorial models for topological spaces, at least up to the notion of weak homotopy equivalence. We refer the interested reader to~\cite{goerss-jardine}. 
\end{remark}

Then, just as categories enriched in topological spaces are a model for $(\infty,1)$-categories, categories enriched in simplicial sets also form a model for $(\infty,1)$-categories.

\section{Combinatorial models and $(\infty,1)$-categories: weak Kan complexes}\label{section. quasi categories}

In this chapter we will talk about a model of $(\infty,1)$-categories that opens a completely different intuition: the model of $\infty$-categories (otherwise known as weak Kan complexes or quasi-categories).

\begin{definition}
Let $n \geq 0$, and choose $0 \leq k \leq n$. Then the {\em $k$th $n$-horn} $\Lambda^n_k$ is defined to be the simplicial set sending $[i]$ to the set of all morphisms $[i] \to [n]$ that do {\em not} surject onto the set $\{0,1,\ldots,k-1,k+1,\ldots,n\}$.
\index{horns}
\index{$\Lambda^n_k$}
\end{definition}

Note the natural transformation $\Lambda^n_k \to \Delta^n$. One may informally think of $\Lambda^n_k$ as obtained from $\Delta^n$ by deleting the interior, and the $k$th face (i.e., the face opposite the $k$th vertex), of $\Delta^n$.

\begin{definition}
\index{weak Kan complex}
\index{quasi-category}
\index{$\infty$-category}
Let $\cC$ be a simplicial set. We say that $\cC$ is a {\em weak Kan complex}, a {\em quasi-category}, or an {\em $\infty$-category} if the following holds: For every $n \geq 2$ and for every $0 < k < n$, every map $\Lambda^n_k \to \cC$ has a filler as follows:
	\eqn\label{eqn. weak kan condition} 
	\xymatrix{
	\Lambda^n_k \ar[r] \ar[d] & \cC \\
	\Delta^n \ar@{-->}[ur]^{\exists}
	}	
	\eqnd
That is, one can extent the map $\Lambda^n_k \to \cC$ to a map from the $n$-simplex $\Delta^n$. A {\em functor} between two $\infty$-categories is a map of simplicial sets.
\end{definition}

It is not a priori obvious that such a simplicial set captures any notion of a category. You will see in Exercise~\ref{exercise. unique filling is category} that, if the horn-filling above is unique, then $\cC$ captures exactly the data of a small category.

\begin{remark}[$(\infty,1)$-categories as weak Kan complexes]\label{remark:weak-kan-model}
It is healthy to think of any category (in the classical sense) as a bunch of vertices (for objects) and edges (for morphisms) and triangles (for commutative triangles). $\infty$-categories generalize this intuition to $(\infty,1)$-categories. This model for $(\infty,1)$-categories goes back to Boardman-Vogt~\cite{boardman-vogt-weak-kan} and is the model developed by Joyal~\cite{joyal} and Lurie~\cite{lurie-htt}.
\end{remark}

\begin{remark}
All models of $(\infty,1)$-categories are equivalent in a precise sense. (See for example the work of Julie Bergner \cite{bergner, bergner-survey}.) As I have said before: If you do not work with this stuff, then for this lecture series, it is probably healthiest (and least technical) to think of an $(\infty,1)$-category as a topologically enriched category.
%Regardless, let us mention that working with $(\infty,1)$-categories is often quite combinatorial in nature. For algebraic necessities, one can often invoke category-theoretic intuitions from classical category theory (for example, the theory of adjunctions); and for coherence results, one often needs to simply check lifting properties rather than construct difficult compatibilities. 
\end{remark}

Given an $\infty$-category $\cC$, we refer to the set $\cC_0$ of vertices be the set of {\em objects} of $\cC$. We refer to the set $\cC_1$ of edges as the set of {\em morphisms} of $\cC$. Note that every edge begins and ends at some vertex; these give the source and target of the morphism. More precisely, note that there are exactly two injections $d_0, d_1: [0] \to [1]$, where the map $d_i$ does not contain $i \in [1]$ in its image. Then, for any $f \in \cC_1$, we let $d_0^* f$ be the target of $f$, and $d_1^* f$ the source of $f$.  One should think of a $k$-simplex (i.e., an element of $\cC_k$) as a homotopy-commuting diagram in the shape of a $k$-simplex.

\begin{example}\label{example:composition-in-weak-kan}
Note that an $\infty$-category $\cC$ does {\em not} specify a composition law for an $(\infty,1)$-category. Regardless, if one fixes three objects $x_0,x_1,x_2 \in \cC_0$ and two composable morphisms $f_{01},f_{12} \in \cC_1$, one may study the space of all 2-simplices in $\cC_2$ of the form
	\eqnn
	\xymatrix{
	x_0 \ar[r]^{f_{01}} \ar[dr]_g & x_1 \ar[d]^{f_{12}} \\
	& x_2.
	}
	\eqnd
Informally, the space of such triangles may be thought of as the space of all $g$ equipped with a homotopy between $g$ and a putative composition $f_{12} \circ f_{01}$. For a weak Kan complex, this space is always contractible. (See 1.6 of~\cite{joyal}, or Corollary~2.3.2.2 of~\cite{lurie-htt}.) That is, by Remark~\ref{remark.contractibility is uniqueness}, there is a homotopically {\em unique} way to compose $f_{12}$ with $f_{01}$.

This illustrates one of the most useful operating principles of $\infty$-categories: One can often avoid defining a specific operation (such as multiplication in an algebra, or composition in a category). Instead, it is often easier to construct a gigantic gadget containing a large space of possible choices for an operation, and to prove the contractibility of such choice-spaces. The insight here is that it is often difficult to finagle coherences of operations defined in particular ways; it is easier to describe properties about the spaces of possible operations.
\end{example}

\begin{remark}\label{remark. spaces as groupoids}
\index{$\infty$-groupoid}
Pavel has also spoken of $\infty$-groupoids; these are $(\infty,1)$-categories in which every morphism is an equivalence.

A general philosophy going back to Grothendieck is that any $\infty$-groupoid is equivalent to a space. That is, given a space, one can obtain an $(\infty,1)$-category whose objects are points of the space, and whose morphisms are paths in the space; and any $\infty$-groupoid is equivalent as an $(\infty,1)$-category to such a thing. This is called the Homotopy Hypothesis; it is provable in any model of $(\infty,1)$-categories. 

Thus you will often hear of the space of objects of an $(\infty,1)$-category (obtained by throwing out all non-equivalences), or of people treating a space as an $(\infty,1)$-category. 

Concretely for us, assume that $\cC$ is a simplicial set for which the horn-filler~\eqref{eqn. weak kan condition} exists for {\em all} $0 \leq k \leq n$. Such a simplicial set is called a {\em Kan complex}, and is clearly an example of an $\infty$-category. Also easily verified is that for any topological space $X$, $\sing(X)$ is a Kan complex. Indeed, Kan complexes are precisely models for $\infty$-groupoids. See Exercise~\ref{exercise. groupoids}.
\end{remark}

\begin{warning-numbered}\label{warning.infinity-category-warnings}
Now that we have come this far, let us point out some intuitions that can be {\em misleading} if one only thinks of $(\infty,1)$-categories as topologically enriched categories. (This is why it's worth taking a second look.)
\enum
	\item A notion of composition need not always be defined for an $(\infty,1)$-category; instead one may provide a contractible space of ways in which composition can be interpreted. (Example~\ref{example:composition-in-weak-kan}.) This is not some impossible amount of data; that one can construct such a contractible space is often a consequence of the model one is using, such as the weak Kan complex model, and constructing an $(\infty,1)$-category is often either a combinatorial or formal task.
	\item Likewise, a functor need not ``respect'' composition in the classical sense---especially when composition may not even be strictly defined! (Remark~\ref{remark:oo-category-functors-need-not-respect-composition}.)
	\item Given an $(\infty,1)$-category $\cC$, there need not be an ``underlying category'' that we have topologized to obtain $\cC$. (In this sense, both $\Chain$ and $\mfld_{n}$ are somewhat misleading examples, as both had natural starting points that we sought to topologize.) One reason that there is no ``underlying category'' to be topologized is because ``underlying set'' is not a notion preserved under weak homotopy equivalences; hence ``underlying category'' is not a notion invariant under equivalences of $(\infty,1)$-categories. (Definition~\ref{remark:equivalence-of-infty-cats}.) An invariant construction is that of the homotopy category, obtained by taking $\pi_0$ of every morphism space.
\enumd
\end{warning-numbered}

\subsection{Nerves}\label{section. nerves}

We have claimed that all models of $(\infty,1)$-categories are equivalent, so let us at least show how to pass from a category enriched in simplicial sets to an $\infty$-category.

Let $A$ be a category in the classical sense. We assume $A$ is small, meaning that the collection of objects $\ob A$ is a set, and for any two objects $x, y \in \ob A$, the set of morphisms $\hom_A(x,y)$ is also a set. (We remind the reader that set-theoretic issues arise at the very beginnings of category theory, and the usual work-around consists of choosing a particular Grothendieck universe of sets one can speak of; a set is called small if it is inside this Grothendieck universe.) Then one can construct a simplicial set out of $A$ as follows. First, consider the poset $[n] = \{0< 1 < \ldots < n\}$ consisting of $n+1$ elements. Then $[n]$ can also be treated as a category, where 
	\eqnn
	\hom_{[n]}(i,j) = \begin{cases}
	\ast & i \leq j \\
	\emptyset & i > j
	\end{cases}.
	\eqnd
	
\begin{definition}\label{defn. nerve}
\index{nerve}
The {\em nerve} of a small category $A$ is the simplicial set $N(A)$ whose set of $n$-simplices is defined to be the set of functors
	\eqnn
	N(A)_n = \fFun([n], A).
	\eqnd
Given any morphism $j: [m] \to [n]$ in $\Delta$, the map $j^*: N(A)_n \to N(A)_m$ is given by precomposition by $j$. 
\end{definition}

Let $\cA$ be a category enriched in simplicial sets. We will assume that for every pair of objects $x, y \in \ob \cA$, that $\hom(x,y)$ is a Kan complex. Then one can construct a weak Kan complex, $N(\cA)$, called the {\em homotopy coherent nerve of $\cA$}. For details, we refer the reader to Definition~1.1.5.5 of~\cite{lurie-htt}. Here, we simply give examples of simplices of $N(\cA)$:
\index{nerve!homotopy coherent}
\begin{itemize}
	\item A 0-simplex of $N(\cA)$ is an object of $\cA$.
	\item An edge $f$ from $x_0$ to $x_1$ in $N(\cA)$ is a 0-simplex of $\hom_{\cA}(x_0,x_1)$.
	\item A triangle
		\eqnn
		\xymatrix{
		x_0 \ar[r]^{f_{01}} \ar[dr]_{f_{02}} & x_1 \ar[d]^{f_{12}} \\
		& x_2
		} \in N(\cA)_2
		\eqnd
	is the data of three 0-simplices $f_{ij} \in \hom_{\cA}(x_i,x_j)_0$, along with the data of a 1-simplex $H \in \hom_{\cA}(x_0,x_2)_1$ from the composition $f_{12} \circ f_{01}$ to $f_{02}$. 
\end{itemize}
For a drawing of what a 3-simplex of $N(\cA)$ represents, we refer the reader to the Appendix of~\cite{nadler-tanaka}. 

\begin{example}
Let $\kan$ be the full subcategory of the category of simplicial sets consisting only of the Kan complexes. Note that $\kan$ can be enriched over itself. (It is an exercise to see that if $A$ and $B$ are Kan complexes, then the simplicial set $\hom(A,B)$ is also a Kan complex.) 

Then the {\em $\infty$-category of spaces} is often modeled by the homotopy coherent nerve associated to (the self-enrichment of) $\kan$. Note that we are using Remark~\ref{remark. simplicial sets are spaces} to think of the theory of Kan complexes as equivalent to the theory of spaces.
\end{example}

\begin{notation}[The $\infty$-category $\TTop$ of spaces]
We will let $\TTop$ denote the $\infty$-category of spaces.
\index{$\TTop$}
\end{notation}

\subsection{Some useful constructions}
Let $A, B, C$ be $\infty$-categories, and fix functors $A \to C$ and $B \to C$. Then one can define the {\em fiber product} $\infty$-category $A \times_C B$ by declaring the set of $n$-simplices to be
	\eqnn
	(A \times_C B) = A_n \times_{C_n} B_n.
	\eqnd

\index{fiber product! of $\infty$-categories}

We end with the construction of slice categories for $\infty$-categories. This will come in handy if you want to understand Section~\ref{section.framings} later on. 

Given two simplicial sets $A$ and $B$, we let the {\em join} $A \star B$ be the simplicial set whose set of $n$-simplices is given by
	\eqnn
	(A \star B)_n = A_n \coprod B_n \coprod \left(\coprod_{i + j = n-1} A_i \times B_j\right).
	\eqnd
\index{$A \star B$}
\index{join}

\begin{definition}[Slice categories]\label{defn. slice categories}
Let $F: \cC \to \cD$ be a functor of $\infty$-categories. We define the {\em over-category} $\cD_{/F}$ to be the simplicial set whose set of $n$-simplices is given by the set of simplicial set maps $j: \Delta^n \star \cC \to \cD$ for which $j$ restricts to $F$ along $\cC \subset \Delta^n \star \cC$. Likewise, the {\em under-category} $\cD_{F/}$ has $n$-simplices given by maps $\cC \star \Delta^n \to \cD$ restricting to $F$ along $\cC$.
\index{slice categories}
\end{definition}

\section{Exercises}

\begin{exercise}\label{exercise. unique filling is category}
\index{nerve}
Suppose that $\cC$ is a weak Kan complex, and that for every $\Lambda^n_k \to \cC$, the filler $\Delta^n \to \cC$ in \eqref{eqn. weak kan condition} is {\em unique}. Prove that the data of $\cC$ is equivalent to the data of a small category whose set of objects consists of $\cC_0$. (More precisely, show that $\cC$ is isomorphic to the nerve of some small category $A$.)

Now suppose that $\cD$ is another such simplicial set. Using your previous solution, exhibit a natural bijection between the collection of simplicial set maps $\cC \to \cD$, and the collection of functors between the associated categories.

Now show that a map $\cC \times \Delta^1 \to \cD$ of simplicial sets is exactly the data of a natural transformation between two functors.

What data does a map $\cC \times \Delta^2 \to \cD$ of simplicial sets encode?
\end{exercise}

\begin{exercise}\label{exercise. groupoids}
Note that there is a unique map $s_0: [1] \to [0]$. Given an $\infty$-category $\cC$ and an object $X \in \cC_0$, we let the {\em identity morphism $\id_X$ of $X$} be the edge $s_0^*X \in \cC_1$.

We say that an edge $f$ from $X$ to $Y$ is an {\em equivalence} in $\cC_1$ if there exist an edge $g$ from $Y$ to $X$, and two triangles
	\eqnn
	\xymatrix{
	X \ar[r]^f \ar[dr]_{\id_X} & Y \ar[d]^g \\
	& X
	},
	\qquad
	\xymatrix{
	Y \ar[r]^g \ar[dr]_{\id_Y} & X \ar[d]^f \\
	& Y
	}
	\qquad
	\in \cC_2
	\eqnd
with the indicated boundary edges. (For example, for the lefthand triangle---call it $T$---we demand that $d_0^* T = g, d_1^* T = \id_X, d_2^* T = f$. Here, $d_i: [1] \to [2]$ is the unique injection that does not contain $i \in [2]$ in its image.
Show that if $\cC$ is an $\infty$-groupoid, then every edge is an equivalence.

Conversely, suppose that $\cC$ is an $\infty$-category for which every edge is an equivalence. Prove that $\cC$ is an $\infty$-groupoid (i.e., show that $\cC$ is a Kan complex).
\end{exercise}

\begin{exercise}
\index{nerve!homotopy coherent}
Let $\spaces$ be the category of topological spaces. It is enriched over itself. Render it a category enriched over simplicial sets by applying $\sing$ to each morphism space, and let $N(\spaces)$ denote the homotopy coherent nerve. What is a 2-simplex in this $\infty$-category?

Look up the definition of homotopy coherent nerve in~\cite{lurie-htt}, or follow the Appendix of~\cite{nadler-tanaka}, to write out what a 3-simplex of $N(\spaces)$ is.
\end{exercise}

\begin{exercise}
\index{nerve!dg-nerve}
\index{nerve!$A_\infty$-nerve}
Look up the notion of the dg-nerve of a dg-category (for example, Construction~1.3.1.6 of~\cite{higher-algebra}). Verify that the dg-nerve of a dg-category is a weak Kan complex, and that any functor of dg-categories induces a functor of $\infty$-categories. Produce an example of a functor between dg-nerves that does {\em not} arise as a functor between dg-categories.  

Repeat for the $A_\infty$-nerve of an $A_\infty$-category. (References include~\cite{faonte, tanaka-thesis, tanaka-pairing}.)

\end{exercise}

\begin{exercise}\label{exercise. slice}
Fix an object $B \in \TTop$. Note that this induced a functor $j: \ast \to \TTop$ from the terminal simplicial set, sending the unique object of $\ast$ to $B$. We will denote the associated slice category $\TTop_{/j}$ by $\TTop_{/B}$.

Write out what a 1-simplex and 2-simplex of $\TTop_{/B}$ are. To know what a 2-simplex is, you will need to know what 3-simplices of homotopy coherent nerves are.
\end{exercise}

\chapter{Factorization homology in higher dimensions}
\label{chapter. fact hom in higher dims}

In this chapter we introduce higher-dimensional versions of associative algebras. The simplest of these are the $\EE_n$-algebras. To many of you, these will contain new kinds of algebraic structure. Informally, $\EE_n$-algebras have more commutativity than associative algebras, but they do not quite have all the commutativity one could wish for. This is a feature, not a bug; the lack of higher commutativity is in some sense what makes these algebras appropriate for detecting building blocks of manifolds of fixed dimension.

We will also define factorization homology. This is a local-to-global invariant satisfying a generalization of the $\tensor$-excision we saw last time for the circle. 

The last section, Section~\ref{section. leftovers}, contains various commentary on the notions we did not touch on in-depth during the spoken lecture.

\clearpage
\section{Review of last talk (Chapter~\ref{chapter. dim 1})}

\begin{itemize}
    \item We defined $\disk_{1,\orr}^{\coprod}$ as a symmetric monoidal $(\infty,1)$-category under disjoint union. We defined also $\mfld_{1,\orr}^{\coprod}$.
    \item We considered symmetric monoidal functors
    \eqnn
    	F\colon\disk_{1,\orr}^{\coprod}
    	\rta \Vect_{\kk}^{\tensor_{\kk}}
	\eqnd
    and saw that the data of such a functor was the same as the data of an associative algebra over $\kk$.
    \item We defined factorization homology with coefficients in an algebra $A:=F(\RR)$ as the left Kan extension along the inclusion $\disk_{1,\orr}\hookrightarrow \mfld_{1,\orr}$. This was opaque and unexplained.
    \item We stated that factorization homology for the circle satisfies $\otimes$-excision.
\end{itemize}

\begin{convention}\label{convention. oo-cats}
From hereon, we will chiefly use the model of $\infty$-categories, rather than utilize the undefined term ``$(\infty,1)$''-category, however useful. In particular, any category that was enriched in simplicial sets in previous chapters will be treated as an $\infty$-category by taking the homotopy coherent nerve (Section~\ref{section. nerves}).
\end{convention}

\clearpage
\section{The example of Hochschild chains}
Let us elaborate on our last example from last lecture.
\index{Hochschild chain complex}

Let $F\colon\disk_{1,\orr}\rta \chain_{\kk}^{\tensor_{\kk}}$ be a symmetric monoidal functor and set $A:=F(\RR)$. I claimed that factorization homology
	\eqnn
		\int_{(-)}A\colon\mfld_{1,\orr}\rta \Chain_\kk
	\eqnd 
allows us to see an action of the group of orientation-preserving diffeomorphisms $\diff^+(S^1)$ on $\int_{S^1}A$. Let's discuss this some more.

Since factorization homology is a functor, we get a map
	\eqnn
		\hom_{\mfld_{1,\orr}}(S^1,S^1)\rta\hom_{\Chain_\kk}(\int_{S^1}A,\int_{S^1}A)
	\eqnd 
and this map is continuous. The circle acts on itself, so we have the inclusion
	\eqnn
		S^1\hookrightarrow\diff^+(S^1)=\hom_{\mfld_{1,\orr}}(S^1,S^1)
	\eqnd 
into the space of orientation-preserving diffeomorphisms.
(This inclusion is a weak homotopy equivalence, though this will not matter for us.)
So we get a continuous map 
	\eqnn
		S^1\rta \hom_{\Chain_\kk}(\int_{S^1}A,\int_{S^1}A).
	\eqnd 
Because this map is continuous, we may study its effect on homotopy groups. The effect on $\pi_0$
	\eqnn
		\pi_0 S^1\rta\pi_0\left(\hom_{\Chain_\kk}(\int_{S^1}A,\int_{S^1}A)\right)\simeq H^{0}\hom_{\Chain_\kk}(\int_{S^1}A,\int_{S^1}A)
	\eqnd 
is not interesting; by definition, a functor must send the identity component $[\mrm{id}]\in\pi_0S^1$ to  the homology class of the identity chain map of $\int_{S^1}A$.\footnote{
\index{Dold-Kan}Here we are using the fact from last time that the homotopy groups of the Dold-Kan space recover the cohomology groups of the $Hom$ cochain complex. See Example~\ref{example:Dold-Kan}.} However, what can we say about the map on fundamental groups?

\begin{theorem}\label{theorem. derham diff is circle action}
Assume that $A$ is a smooth commutative $\kk$-algebra and $\kk$ is perfect. The map 
	\eqnn
		\pi_1S^1\rta\pi_1\left(\hom_{\Chain_\kk}(\int_{S^1}A,\int_{S^1}A)\right)\simeq H^{-1}\hom_{\Chain_\kk}(\int_{S^1}A,\int_{S^1}A)
	\eqnd 
induced by factorization homology sends a generator $1\in\pi_1S^1=\bb{Z}$ to the de Rham differential $[d_{dR}]$.
\end{theorem}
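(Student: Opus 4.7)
The plan is to reduce the theorem to a chain-level identification and then compare with Connes' cyclic operator. By Example~\ref{example:Dold-Kan} applied to $\hom_{\chain_\kk}(\int_{S^1}A,\int_{S^1}A)$, an element of $\pi_1$ based at the identity is represented by a degree $-1$ self-map $H\colon \int_{S^1}A \to \int_{S^1}A$ satisfying $dH+Hd=0$. The generator $1\in\pi_1(S^1)\cong\pi_1(\diff^+(S^1))$ is represented by the loop of rotations $\{R_t\}_{t\in[0,1]}$; applying $F=\int_{(-)}A$, we obtain such a degree $-1$ cocycle $H_{\mathrm{rot}}$ in $\mathrm{Hom}^\bullet(\int_{S^1}A,\int_{S^1}A)$. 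The goal is then to identify the cohomology class $[H_{\mathrm{rot}}]$ with $[d_{dR}]$ under the HKR identification.

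Next, I would invoke Theorem~\ref{theorem: circle excision} together with Example~\ref{example:hochschild-complex} to model $\int_{S^1}A$ by the Hochschild chain complex $CH_\bullet(A)=A\otimes^{\LL}_{A\otimes A^{\op}}A$, computed via the standard bar resolution. The key geometric input is that the rotation loop $\{R_t\}$ admits a lift, through the excision equivalence, to an explicit $1$-parameter family of self-maps of the bar model of $CH_\bullet(A)$. Concretely, one can write $R_t$ as cutting $S^1$ at a moving basepoint and re-gluing; tracking this cutting-and-gluing through the bar model produces on the chain level the standard formula
\[
B(a_0\otimes a_1\otimes\cdots\otimes a_n)=\sum_{i=0}^{n}(\pm)\,1\otimes a_i\otimes\cdots\otimes a_n\otimes a_0\otimes\cdots\otimes a_{i-1},
\]
i.e.\ Connes' cyclic operator. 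Thus under $\otimes$-excision one has $[H_{\mathrm{rot}}]=[B]$ in $H^{-1}\mathrm{Hom}^\bullet(CH_\bullet(A),CH_\bullet(A))$; this is precisely the classical identification of the $S^1$-action on Hochschild chains with Connes' $B$, for which one may invoke Loday's account or the equivalence of the factorization-homology circle action with the cyclic structure.

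Finally, I would apply the Hochschild–Kostant–Rosenberg isomorphism in the form cited just before the theorem: for $A$ smooth commutative over a perfect field $\kk$, there is a quasi-isomorphism $CH_\bullet(A)\simeq\bigoplus_i\Omega^i(A)[i]$ intertwining Connes' $B$ with the de Rham differential $d_{dR}$. Combining this with the previous step gives $[H_{\mathrm{rot}}]=[d_{dR}]$ in $H^{-1}\mathrm{Hom}^\bullet(\int_{S^1}A,\int_{S^1}A)$, which is the claim.

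The main obstacle is the middle step: the identification of the geometric $\diff^+(S^1)$-action on $\int_{S^1}A$, transported across the (model-dependent) excision equivalence of Theorem~\ref{theorem: circle excision}, with Connes' $B$-operator on a specific bar model of $CH_\bullet(A)$. The clean way to carry this out is to work with an $S^1$-equivariant model of $\int_{S^1}A$ (for instance a factorization-algebra model over a parametrized Ran space of $S^1$, or the cyclic-bar model of Nikolaus–Scholze) in which both actions are manifestly present; one then checks that the equivalence to the bar complex of $A\otimes A^{\op}$-bimodules intertwines the two. Once this compatibility is established the HKR step is formal, since HKR is known to be $S^1$-equivariant and to send the cyclic operator to $d_{dR}$.
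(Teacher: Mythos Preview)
The paper does not actually prove this theorem: immediately after the statement it says ``We will not prove this theorem here,'' and refers the reader to \cite{husemoller-cyclic-notes}, \cite{loday-cyclic-book}, \cite{benzvi-nadler-loops-1}, \cite{higher-algebra}, and \cite{loday-quillen}. So there is no proof in the paper to compare against.

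Your outline is the standard route taken in those references: identify $\int_{S^1}A$ with Hochschild chains, identify the geometric circle action with Connes' $B$-operator, and then invoke HKR to match $B$ with $d_{dR}$. You have correctly flagged the genuine difficulty, namely the middle step of transporting the $\diff^+(S^1)$-action across the excision equivalence to recover $B$ on a specific bar model. This is not something one can just write down without either (a) setting up an $S^1$-equivariant model from the start (cyclic bar construction, cyclic sets, or a Ran-space model) or (b) appealing to a black-box equivalence of cyclic objects. Your proposal gestures at both options but does not carry either out; in that sense what you have written is a correct proof \emph{sketch} at the level of the cited literature, not a self-contained proof. Given that the paper itself punts to the literature, this is entirely appropriate.
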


We will not prove this theorem here. (The reader may consult sources such as~\cite{husemoller-cyclic-notes}, \cite{loday-cyclic-book}, Section~1.4 of~\cite{benzvi-nadler-loops-1}, Example 5.5.3.14 of~\cite{higher-algebra}, and Proposition~2.2 of~\cite{loday-quillen}.) Regardless, let us explain its content.

Recall that by excision (Theorem~\ref{theorem: circle excision}
) we have
	\eqnn
		\int_{S^1}A\simeq A\bigotimes_{A\otimes A^\mrm{op}}^\bb{L}A
	\eqnd 
where the righthand side is a well-known chain complex, called the Hochschild chain complex of $A$ (Example~\ref{example:hochschild-complex}). By the Hochschild-Kostant-Rosenberg theorem~\cite{HKR}, 
\index{Hochschild-Kostant-Rosenberg}
when $A$ is a smooth commutative algebra and $\kk$ is a perfect field, there is an isomorphism
	\eqnn
		H^{-l}\left(\text{Hochschild chains on A}\right)
		\cong
		\{\text{degree  $l$ algebraic de Rham forms}\}=\Omega_{A/\kk}^l.
	\eqnd
Yes, you read that correctly. The {\em cohomology} of something recovers {\em forms}. Moreover, things seem to be in wonky degrees: For example, the 1-forms are concentrated in cohomological degree {\em minus} 1. And of course, when we write down de Rham forms, we can also usually write down a de Rham differential. Where is the de Rham differential here? Theorem~\ref{theorem. derham diff is circle action} states that the degree -1 element picked out by the generator of $\pi_1 S^1$ is precisely the de Rham differential.

\begin{remark}
Historically, people {\em combinatorially} exhibited an $S^1$ action on the Hochschild chain complex. This story goes at least as far back as Connes~\cite{connes83}.  Factorization homology exhibits this action more geometrically.

In higher dimensions, and in particular for manifolds other than $S^1$, you might appreciate that combinatorially modeling actions of diffeomorphism groups is not an easy task. Finding such a combinatorial model may even be an ad hoc task one must do one manifold at a time. Factorization homology, for free, exhibits actions of diffeomorphism groups on our invariants. 
\end{remark}

\begin{remark}\label{remark: coherent-group-actions}
An astute reader may have heeded the repeated warnings that functors of $\infty$-categories need not respect composition on the nose (Remark~\ref{remark:oo-category-functors-need-not-respect-composition} and Warning~\ref{warning.infinity-category-warnings}). Now, because of the equivalence of the different models of $\infty$-categories, if one has a functor $f: \cC \to \cD$ of $\infty$-categories, it is true that one can write an honest continuous, group homomorphism
	\eqnn
	\Aut_\cC(X) \to \Aut_{\cD}(fX)
	\eqnd
for any object $X \in \cC$. However, one can only do this after possibly passing to very particular representatives of the homotopy equivalence classes of the automorphism spaces in both the domain and target. So even if $\cC = \mfld_{n,\fr}$ or $\mfld_{n,\orr}$ (which are $\infty$-categories of manifolds that we will introduce below), it is not always feasible, nor advisable, to write a strict and continuous group action of $\diff_{\fr}(X)$ or  $\diff_{\orr}(X)$ on its factorization homology. Instead, the natural output of factorization homology is a homotopy coherent (not strict) action of $\diff$ on the target.

Indeed, at this point, it is the opinion of this lecturer that one ought to get used to the idea of a functor---and of an action---that need not respect composition on the nose, and embrace models like the weak Kan complex model that gets one used to thinking about maps and functors that simply supply extra homotopies. 
\end{remark}

\clearpage
\section{The algebra of disks in higher dimensions}

We will now discuss the algebra of disks in higher dimensions. 

\subsection{Framings and orientations, a first glance} 
\index{$G$-structure!tangential}
Before we get on with it, let us recall Exercise~\ref{exercise: no orientations}. We learned that by considering disks with different kinds of tangential structure (an orientation, or no orientation at all)---and altering our embedding spaces accordingly---we discover different kinds of algebraic structures. The same is true in higher dimensions. 

While there is a different kind of $n$-dimensional disk algebra for any choice of group $G$ equipped with a continuous homomorphism $G \to GL_n(\RR)$, we will mainly consider the cases of $G = SO_n(\RR)$ (the oriented case), and of $G = \{e\}$ (the case of framed manifolds). As we will see, the framed case will have the simplest algebraic description.

\begin{definition}
Let $\disnn$ be the $\infty$-category whose objects are finite disjoint unions of oriented, $n$-dimensional open disks. Morphisms are orientation-preserving smooth embeddings.
\index{$\disnn$}
\end{definition}

\begin{remark}
As before, any object of $\disnn$ is equivalent to $(\RR^n)^{\coprod k}$ for some $k \geq 0$. Disjoint union renders $\disnn$ a symmetric monoidal $\infty$-category.
\end{remark}

\begin{definition}
Fix $\cC^{\tensor}$ a symmetric monoidal $\infty$-category. A {\em $\disk_{n,\orr}$-algebra} in $\cC^{\tensor}$ is a symmetric monoidal functor
	\eqnn
	F: \disk_{n,\orr}^{\coprod} \to \cC^\tensor.
	\eqnd
\index{$\disk_{n,\orr}$-algebra}
\end{definition}

Here is the framed variant:

\begin{definition}
Fix $X$ a smooth $n$-manifold. A {\em framing} on $X$ is a choice of trivialization $\phi:TX\xrta{\cong}X\times \RR^n$ of vector bundles.
\index{framing}
\end{definition}

\begin{remark}
Not every smooth manifold $X$ admits a framing. For example, if $X$ is a compact, orientable, boundary-less 2-manifold, $X$ admits a framing only when $X$ is a torus. Note, however, that a given manifold may have many inequivalent framings. 
\end{remark}

\begin{example}
Let $X$ be a framed $n$-dimensional manifold. The thickening $X \times \RR^k$ may admit many framings that do not decompose as the ``direct product'' of a framing on $X$ with a framing on $\RR^k$.
\end{example}

\begin{definition}[Informal.]
\index{$\disk_{n,\fr}$}
Let $\disk_{n,\fr}$ be the $\infty$-category with objects finite disjoint unions of framed $n$-disks and morphisms smooth embeddings {\em equipped} with a compatibility of framings. 

\index{$\disk_{n,\fr}$-algebra}
A {\em $\disk_{n,\fr}$-algebra} in $\cC^{\tensor}$ is a symmetric monoidal functor 
	\eqnn
	F: \disk_{n,\fr}^{\coprod} \to \cC^{\tensor}.
	\eqnd
\end{definition}

\begin{remark}
This ``compatibility'' of framings doesn't have an expression that you learn in a typical differential geometry class. Let me just say for now that a morphism isn't simply a smooth embedding $j: X \to Y$ satisfying a property, but a morphism is equipped with additional data on $j$. See Section~\ref{section.framings} and Definition~\ref{defn:mfld_fr} at the end of this chapter for details. We will see there that the most natural way to think of framings (and maps of framed manifolds) is by constructing pullbacks of certain natural $\infty$-categories.
\end{remark}

To see the difference between $\disk_{n,\orr}$- and $\disk_{n,\fr}$-algebras, 
let us begin to unpack the definitions. Fix a symmetric monoidal functor $F$ out of $\disk_{n,\orr}$. The reader may benefit from setting $n=2$ for ease of drawing, though we will work with arbitrary $n$. 

As in the one-dimensional case, we denote by $A:=F(\RR^n)$ the value of $F$ on a single disk. On objects, $F$ sends the empty manifold to $\kk$, a single disk $\RR^n$ to $A$, and a disjoint union $(\RR^n)^{\coprod_k}$ to $A^{\otimes k}$.  $F$ also induces a map
	\eqnn
		\hom_{\disk_{n,\orr}}(\RR^n,\RR^n) =: \Emb^{\orr}(\RR^n,\RR^n)\rta \hom_\cC(A,A)
	\eqnd 
So let us first understand $\Emb^{\orr}(\RR^n,\RR^n)$, the space of oriented embeddings of $\RR^n$ to itself.

\begin{lemma}\label{lemma: SOn space of embeddings}
For any $n$, the inclusion of special orthogonal transformations 
	\eqnn
		SO_n(\RR)\rta\Emb^{\orr}(\RR^n,\RR^n)
	\eqnd 
is a homotopy equivalence.
\end{lemma}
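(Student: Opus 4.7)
The plan is to exhibit an explicit deformation retraction of $\Emb^{\orr}(\RR^n,\RR^n)$ onto $SO_n(\RR)$, carried out in three stages. Throughout, I will think of $\Emb^{\orr}(\RR^n,\RR^n)$ as equipped with the weak Whitney $C^\infty$-topology, in which composition with smooth maps and term-by-term differentiation are continuous.

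Stage 1: Retract to embeddings fixing the origin. Define
\[
H_1(s,f)(x) = f(x) - s f(0),\qquad s\in[0,1].
\]
Each $H_1(s,f)$ is the composition of $f$ with a translation, hence an orientation-preserving embedding; $H_1(0,f)=f$ and $H_1(1,f)(0)=0$; and if $f(0)=0$ then $H_1(s,f)=f$ for all $s$. So this continuously retracts onto the subspace $\Emb^{\orr}_0 \subset \Emb^{\orr}$ of embeddings sending $0\mapsto 0$.

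Stage 2: Retract $\Emb^{\orr}_0$ onto the linear orientation-preserving embeddings, i.e.\ onto $GL_n^+(\RR)$. Define
\[
H_2(s,f)(x) = \frac{1}{1-s} f((1-s)x)\quad\text{for } s\in[0,1),\qquad H_2(1,f)(x) = Df_0(x).
\]
For $s<1$ each $H_2(s,f)$ is a composition of $f$ with scalings, hence an embedding; when $f$ is linear, $H_2(s,f)=f$. Continuity at $s=1$ is the one point that requires checking: writing $f(y) = Df_0(y) + r(y)$ with $r$ smooth, $r(0)=0$, and $Dr_0=0$, one finds $H_2(s,f)(x) - Df_0(x) = (1-s)^{-1} r((1-s)x)$, whose $k$-th derivative in $x$ equals $(1-s)^{k-1}(D^k r)((1-s)x)$. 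This tends to $0$ uniformly on compact sets as $s\to 1$ for every $k\geq 1$, and to $0$ for $k=0$ because $r(y)=O(|y|^2)$ near the origin. Hence $H_2$ is continuous in the $C^\infty$-topology, and the map $f\mapsto Df_0$ realizes the retraction $\Emb^{\orr}_0 \to GL_n^+(\RR)$.

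Stage 3: Retract $GL_n^+(\RR)$ onto $SO_n(\RR)$ by polar decomposition: any $A\in GL_n^+(\RR)$ writes uniquely as $A=QP$ with $Q\in SO_n(\RR)$ and $P$ positive-definite symmetric, with $Q,P$ depending continuously on $A$. Then $H_3(s,A) = Q\bigl((1-s)P + sI\bigr)$ stays in $GL_n^+(\RR)$ for all $s\in[0,1]$ (as the convex combination of positive-definite matrices is positive-definite), restricts to the identity on $SO_n(\RR)$ (where $P=I$), and ends at $Q\in SO_n(\RR)$. Concatenating $H_1$, $H_2$, $H_3$ produces a deformation retraction of $\Emb^{\orr}(\RR^n,\RR^n)$ onto $SO_n(\RR)$, so the inclusion is a homotopy equivalence.

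The only nontrivial step is the continuity of $H_2$ at $s=1$ in the $C^\infty$-topology; this is the main technical point and is handled by the explicit Taylor estimate above. All other ingredients (polar decomposition, composition continuity, the fact that a linear orientation-preserving map is exactly an element of $GL_n^+(\RR)$) are standard.
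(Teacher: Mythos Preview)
Your proof is correct and follows essentially the same three-stage strategy as the paper: translate to fix the origin, rescale via the difference quotient to land in $GL_n^+(\RR)$, then retract to $SO_n(\RR)$. The only cosmetic differences are that you use polar decomposition where the paper invokes Gram-Schmidt, and you supply the explicit Taylor estimate for continuity of the rescaling at $s=1$ that the paper leaves implicit.
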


\begin{proof}
Let $\Emb^{\orr}_{0}(\RR^n,\RR^n)$ denote the subspace consisting of those embeddings that send the origin to the origin. By translating, the inclusion
	\eqnn
		\Emb^{\orr}_{0}(\RR^n,\RR^n)\rta \Emb^{\orr}(\RR^n,\RR^n)
	\eqnd 
is a homotopy equivalence. (Any embedding $j$ can be translated to the sum $j - tj(0)$; running this from time $t=0$ to $t=1$ gives the retraction.)

Moreover, let us send an origin-preserving embedding $f$ to its difference quotient at the origin:
	\eqnn
		\frac{f(x_0 + t \vec v) - f(x_0)}{t} 
		=
		\frac{f(t\vec v)}{t}
		\qquad
		\text{(at $x_0 = 0$).}
	\eqnd 
Running this difference quotient from time $t=1$ to $t=0$ exhibits a deformation retraction of $\Emb^{\orr}_{0}(\RR^n,\RR^n)$ to $Gl_n^+(\RR)$ (invertible matrices with positive determinant). Finally, the Gram-Schmidt process---whose formulas are all continuous in its parameters---defines a deformation retraction of $Gl_n^+(\RR)$ onto $SO_n(\RR)$.
\end{proof}

\begin{remark}
So an oriented $n$-disk-algebra specifies an object $A = F(\RR^n)$ with some homotopically coherent action of $SO_n(\RR)$, as articulated by the data of $F$ on morphism spaces:
	\eqnn
	SO_n(\RR) \simeq \hom_{\disk_{n,\orr}}(\RR^n,\RR^n)
	\to
	\hom_{\chain_{\kk}}(A,A).
	\eqnd	
This is already a lot of data; actions by (special) orthogonal groups do not grow on trees. 
\end{remark}

Demanding that our embeddings $j$ be equipped with framing compatibilities simplifies the situation considerably. Informally, one can run the deformation retractions in the proof above so that the derivative of $j$ at the origin is equipped with a homotopy to the $n \times n$ identity matrix; so in fact, the inclusion of a point,
	\eqnn
	\ast = \{I_{n \times n}\} \to \hom_{\disk_{n,\fr}}(\RR^n,\RR^n),
	\eqnd
is a homotopy equivalence. That is, the framed embedding space is contractible.We state this as a Lemma for the record:

\begin{lemma}\label{lemma: framed space of embeddings}
For any $n$, the inclusion of the identity
	\eqnn
		\ast \rta \Emb^{\fr}(\RR^n,\RR^n)
	\eqnd 
is a homotopy equivalence.
\end{lemma}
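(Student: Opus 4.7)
The plan is to realize $\Emb^{\fr}(\RR^n,\RR^n)$ as a homotopy pullback and recognize it as the homotopy fiber of a homotopy equivalence. Concretely, using the (forthcoming) definition of framed embeddings from Section~\ref{section.framings}, a framed embedding $j:\RR^n\to\RR^n$ is the data of a smooth embedding together with a path, in the mapping space $\Map(\RR^n, GL_n^+(\RR))$, from the derivative $Dj:\RR^n\to GL_n^+(\RR)$, $x\mapsto D_x j$, to the constant map at the identity matrix $I_{n\times n}$ (which represents the standard framing on both source and target). In other words, $\Emb^{\fr}(\RR^n,\RR^n)$ fits into a homotopy pullback square
\[
\begin{tikzcd}
\Emb^{\fr}(\RR^n,\RR^n) \ar[r] \ar[d] & \Emb^{\orr}(\RR^n,\RR^n) \ar[d,"j\mapsto Dj"] \\
\ast \ar[r,"I_{n\times n}"'] & \Map(\RR^n, GL_n^+(\RR)).
\end{tikzcd}
\]

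First I would observe that since $\RR^n$ is contractible, evaluation at the origin $\mathrm{ev}_0: \Map(\RR^n,GL_n^+(\RR))\to GL_n^+(\RR)$ is a homotopy equivalence. Composing with $Dj\mapsto D_0 j$, the righthand vertical map becomes the derivative-at-the-origin map $\Emb^{\orr}(\RR^n,\RR^n)\to GL_n^+(\RR)$. But this is exactly the homotopy equivalence produced in the proof of Lemma~\ref{lemma: SOn space of embeddings} (the deformation retraction $f\mapsto f(t\vec v)/t$ at $t\to 0$ lands on the derivative at the origin, followed by Gram–Schmidt to $SO_n(\RR)$). Hence the diagram above is, up to homotopy equivalence, a homotopy pullback along a homotopy equivalence, and so $\Emb^{\fr}(\RR^n,\RR^n)$ is contractible.

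Second I would produce the basepoint concretely: the identity embedding $\id_{\RR^n}$ together with the constant path at $I_{n\times n}$ gives a point of $\Emb^{\fr}(\RR^n,\RR^n)$, and by the contractibility established above, the inclusion of this point is a weak homotopy equivalence. Following Remark~\ref{remark.contractibility is uniqueness}, this is exactly the assertion of the lemma.

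The main obstacle is not geometric but definitional: it is making precise what ``framed embedding'' means as a space, so that the pullback square above is literal. This is why the author defers the definition of $\disk_{n,\fr}$ to Section~\ref{section.framings} (via $\infty$-categorical pullbacks), and in a fully rigorous write-up one would need to cite that setup — which converts a property of embeddings (preserving the framing) into extra data (a path of framings), thereby turning the naive discrete notion into something that behaves well under homotopy equivalence. Once that framework is in place, the argument reduces to the elementary fact that a homotopy fiber of a homotopy equivalence is contractible, combined with Lemma~\ref{lemma: SOn space of embeddings}.
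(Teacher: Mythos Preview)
Your argument is correct and is essentially the same as the paper's (given in Remark~\ref{remark. framed embs of R^n contractible}): both realize $\Emb^{\fr}(\RR^n,\RR^n)$ as a homotopy pullback, invoke Lemma~\ref{lemma: SOn space of embeddings} to identify one leg as a homotopy equivalence, and then conclude contractibility because the homotopy pullback along an equivalence is an equivalence. The only cosmetic difference is that the paper phrases the pullback in the slice-category language of Definition~\ref{defn:mfld_fr} (so the bottom-right corner is $\hom_{\TTop_{/BGL_n(\RR)}}(\RR^n,\RR^n)$ rather than $\Map(\RR^n,GL_n^+(\RR))$), whereas you have unpacked this concretely in terms of derivatives and paths.
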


For a proof, see Remark~\ref{remark. framed embs of R^n contractible}.

In summary so far, a $\disk_{n,\orr}$-algebra assigns to $\RR^n$ an object $A$ with an $SO_n(\RR)$ action. The framed version---a $\disk_{n,\fr}$-algebra---simply sends $\RR^n$ to an object (with no specified $SO_n(\RR)$ action).

\subsection{$\EE_n$-algebras}
\index{$\EE_n$-algebra}
What of the multiplication maps? As before, setting $A:= F(\RR^n)$, we would like to interpret morphisms
	\eqnn
	j: \RR^n \coprod \RR^n \to \RR^n
	\eqnd
as inducing multiplication maps $F(j): A \tensor A \to A$. 
When $n \geq 2$, up to isotopy, there is one oriented embedding $j: \RR^n \coprod \RR^n \to \RR^n$ (this is in contrast to the $n=1$ case). So up to chain homotopy, we have a chain map $m: A \tensor A \to A$ specified by $F$; but let us choose a particular $j$ and let $m = F(j)$. 

How does $m$ interact with the swap map?

Let's begin with the two-dimensional case and let $\sigma:\RR^2\sqcup\RR^2\rta \RR^2\sqcup\RR^2 $ be the swap map. Note that, unlike in the 1-dimensional case, one can exhibit an isotopy 
	\eqnn
	j\simeq j\circ \sigma
	\eqnd
(see for example Figure~\ref{image. E2 swap isotopy}).
For every such isotopy, $F$ specifies a homotopy $m \circ \sigma \sim m$. In particular, $m$ is commutative up to homotopy.

\begin{figure}
\caption{\label{image. E2 swap isotopy}
	A homotopy swapping the placement of two embedded disks.
}
	\eqnn
    	\xy
    	\xyimport(8,8)(0,0){\includegraphics[width=2in]{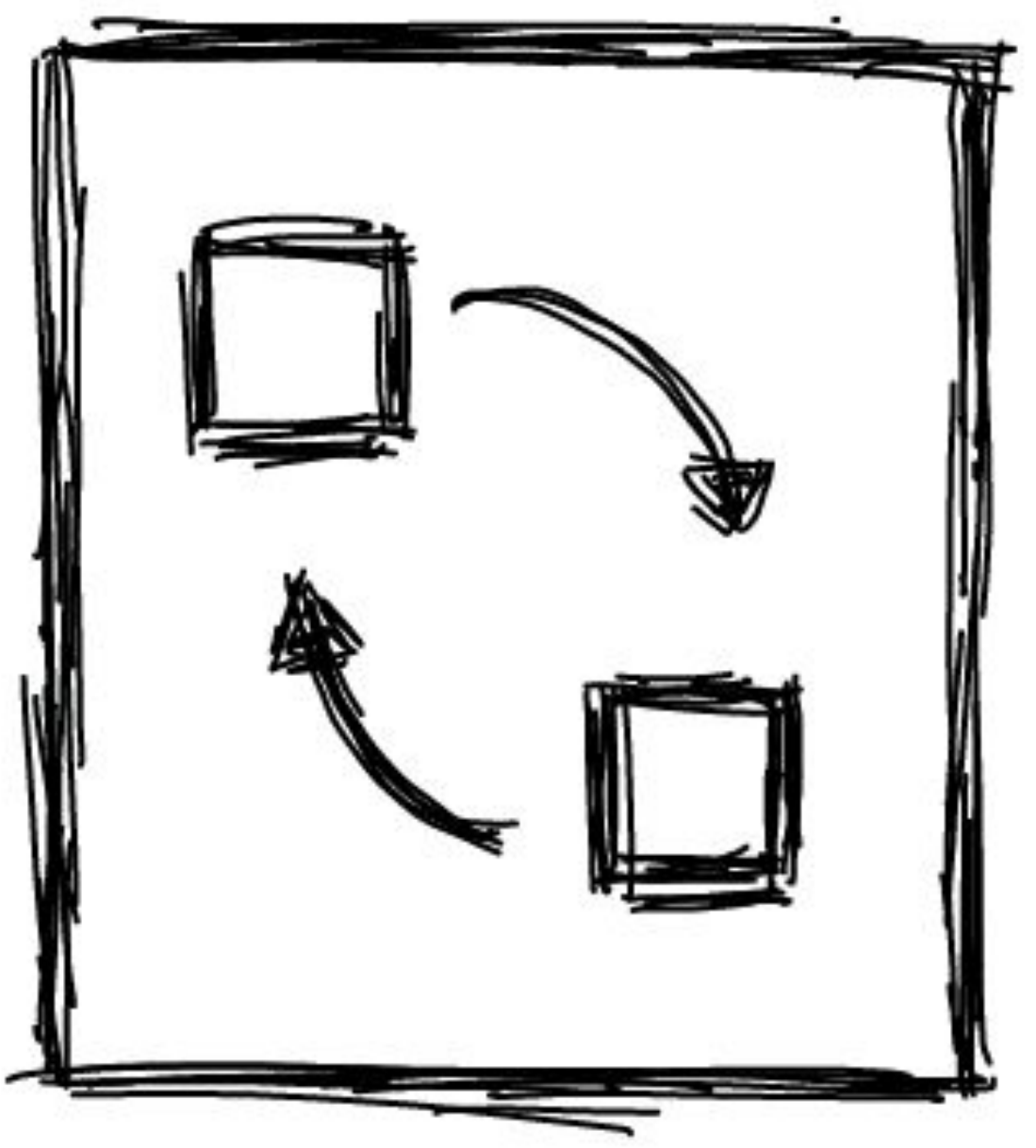}}
   		\endxy
	\eqnd
\end{figure}

\begin{remark}
You have most likely seen this picture before; it is the same picture one draws when proving that the homotopy groups $\pi_n$ for $n \geq 2$ are all commutative. This argument is an example of the so-called Eckmann-Hilton argument.
\end{remark}

However, the above isotopy of disks is far from unique---one could wind a pair of disks around each other one more time. 

\begin{figure}
\caption{An isotopy from a multiplication to itself.}\label{image. winding 2-disks}
	\eqnn
    			\xy
    			\xyimport(8,8)(0,0){\includegraphics[width=2in]{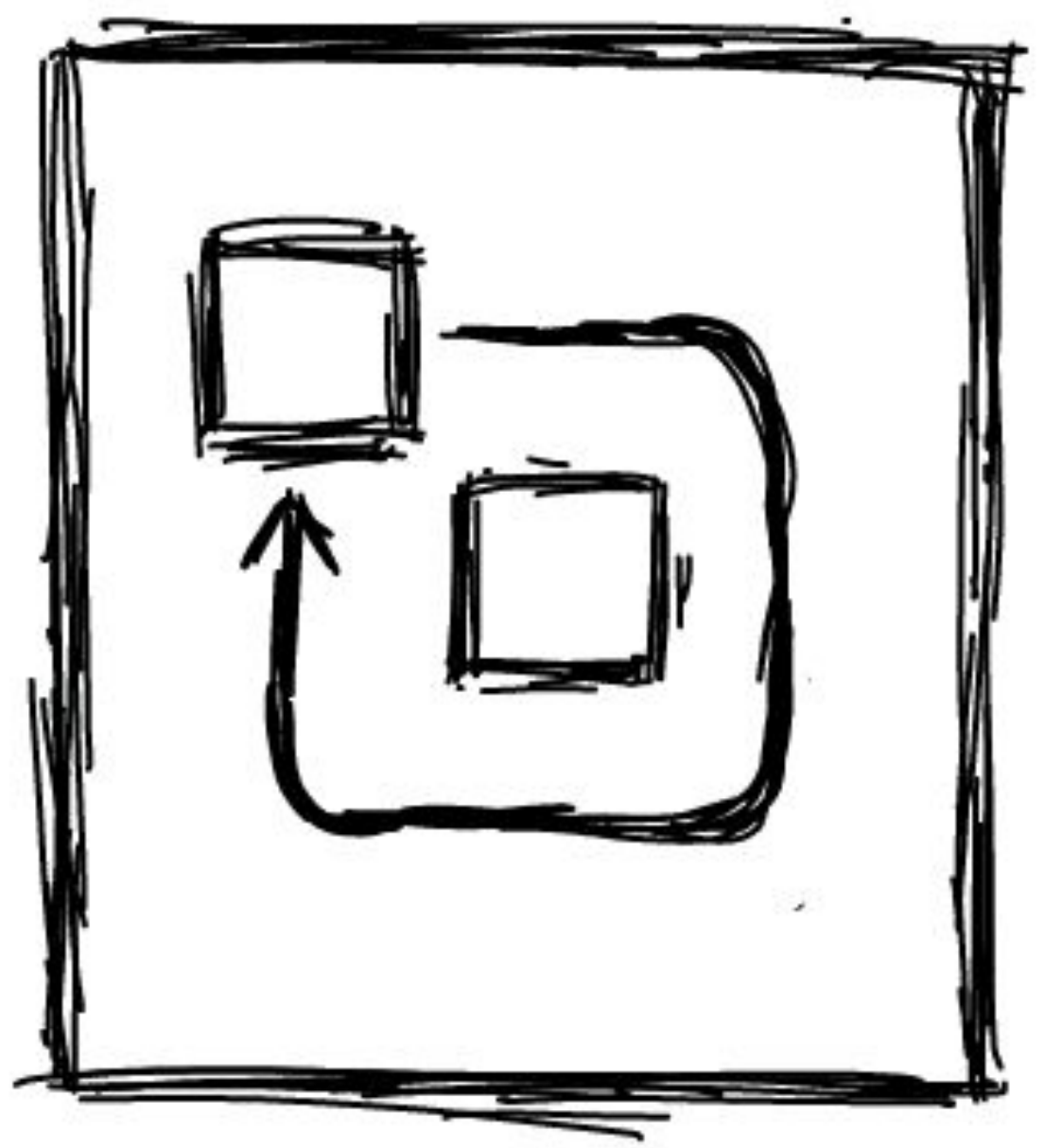}}
    			\endxy
	\eqnd
\end{figure}

So the space of isotopies from $j$ to $j \circ \sigma$ is not contractible (in fact, we see a winding number obstruction). 

So we see that $m=F(j)$ is commutative up to homotopy; but the {\em space} of ways in which this commutativity is exhibited is non-trivial, and $F$ specifies $\ZZ$ many ways to do so (given by the winding number of disks moving past each other). 

\begin{figure}
\caption{A disk exhibiting a null homotopy of the loop from Figure~\ref{image. winding 2-disks}}\label{image. winding 3-disks}
	\eqnn
    			\xy
    			\xyimport(8,8)(0,0){\includegraphics[width=2in]{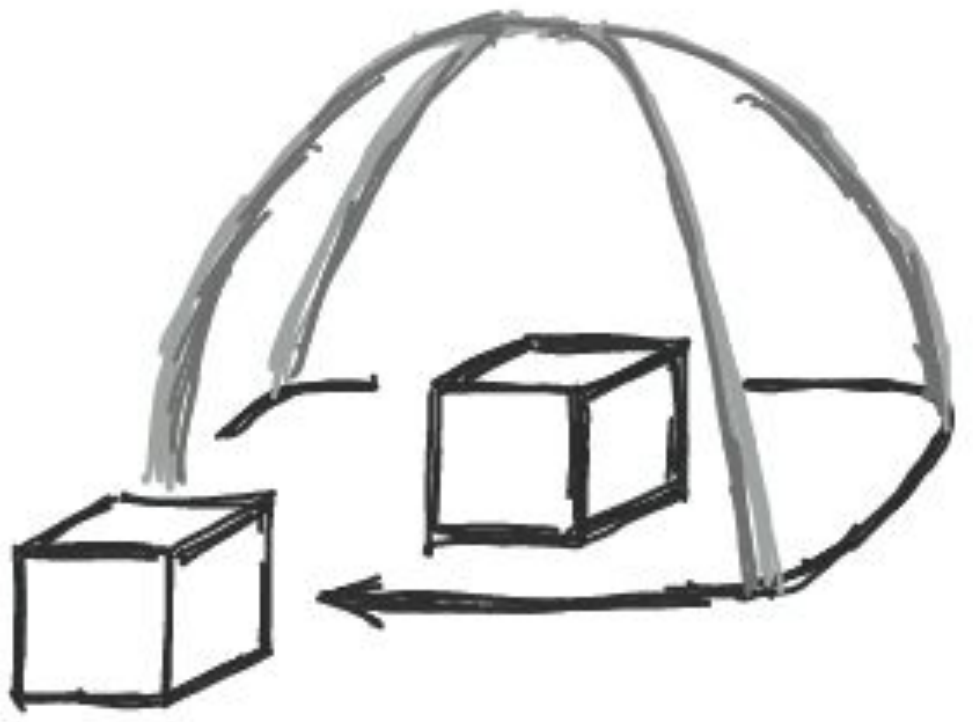}}
    			\endxy
	\eqnd
\end{figure}

In higher dimensions, this winding number obstruction can be trivialized, but another obstruction arises.

In Figure~\ref{image. winding 3-disks}, we have trivialized the winding by choosing a disk whose boundary is the winding isotopy. But we note that another mathematician could have chosen a disk presenting the {\em lower} hemisphere of a sphere, rather than the upper. That is, the space of exhibiting commutativity now has a non-trivial two-sphere appearing. 

In general, in dimension $n$, a $\disk_{n,\fr}$-algebra yields an algebra with a multiplication which is commutative up to homotopy, and the canonicity of the commutativity is obstructed by  an $(n-1)$-dimensional sphere. In a precise sense, the limit as $n \to \infty$ gives rise to the notion of a ``fully'' commutative algebra for homotopy theorists, but we won't get into the details here. The interested reader may look up the term {\em $E_\infty$-algebra}.

\begin{definition} 
\index{$\EE_n$-algebra}
Fix a symmetric monoidal $\infty$-category $\cC^{\tensor}$.
For any $n \geq 1$,
an $\EE_n$-algebra in $\cC^{\tensor}$ is a symmetric monoidal functor $\disk_{n,\fr}^{\coprod} \rta\cC^{\tensor}$.
\end{definition}

\begin{remark}
The notion of an $\EE_n$-algebra was defined (differently) far before being expressed the way we have expressed it. Historically, $\EE_n$-algebras were defined by considering not the space of framed smooth embeddings, but by considering spaces of rectilinear embeddings of bounded cubes---that is, embeddings that component-wise can be written as a composition of scalings and translations. This definition goes back at least to the work of Boardman and Vogt~\cite{boardman-vogt-68}, and was famously studied by May in~\cite{may-iterated-loops-1972}. 

We will proceed using our definition. For now the reader may simply have the intuition that an $\EE_n$-algebra for $n \geq 2$ is an algebra equipped with some commutativity data, but this data is not quite canonical.
\end{remark}

\begin{remark}
We have already seen that a $\disk_{n,\orr}$-algebra (i.e., the oriented setting) specifies at the very least an object $A$ with an action of $SO_n(\RR)$. In a way we do not articulate here, one can informally think of a $\disk_{n,\orr}$-algebra as an $E_n$-algebra equipped with an $SO_n(\RR)$-action which is compatible with the multiplication and commutativity. (It is this compatibility that we do not articulate.)
\end{remark}

\subsection{Examples of $\EE_n$-algebras}\label{section. examples of algebras}
\index{$\EE_n$-algebra!examples}
See also the Exercises at the end of this chapter.

\begin{example}\label{example.commutative-is-En}
Fix $R$ a base ring. We let $\chain_R^{\tensor_R^{\LL}}$ be the symmetric monoidal $\infty$-category whose objects are chain complexes of $R$-modules, and whose symmetric monoidal structure is given by derived tensor product over $R$. (If one takes $R=\kk$ to be a field, one need not derive the tensor product as all chain complexes are flat.)

Then a commutative $R$-algebra is an $\EE_n$-algebra in $\chain_R^{\tensor_R^\LL}$ for any $n$. More generally, a cdga (commutative dg algebra) is an $\EE_n$-algebra for any $n$.
\end{example}

\begin{remark}
In Exercise~\ref{exercise. framing in 1 dim is orientation}, you will show that a framing on a 1-manifold is the same thing as an orientation.
Thus, an $\EE_1$-algebra is the same thing as a $\disk_{1,\orr}$-algebra, which we grew to love in the last lecture. In fact, $\EE_1$ and $A_\infty$ are synonyms to a homotopy-theorist.
\end{remark}

\begin{example}[Dunn additivity]\label{example.dunn-additivity}
\index{Dunn additivity}
Let $\cC^{\tensor}$ be a symmetric monoidal $\infty$-category. One can then show that the $\infty$-category $\EE_1\Alg(\cC^{\tensor})$ of $\EE_1$-algebras in $\cC^{\tensor}$
 is again a symmetric monoidal $\infty$-category under $\tensor$. Hence one can iterate: What are the $\EE_1$-algebras in $\EE_1$-algebras?

The {\em Dunn additivity theorem} states that there exists an equivalence
	\eqnn
	\EE_n\Alg(\cC^{\tensor})
	\simeq
	\EE_1\Alg(\EE_1\Alg(\ldots(\EE_1\Alg(\cC^{\tensor}))))
	\eqnd
between the $\infty$-category of $\EE_n$-algebras in $\cC^{\tensor}$, and the $\infty$-category of $\EE_1$-algebras in $\ldots$ in $\EE_1$-algebras in $\cC^{\tensor}$. For example, an $\EE_2$-algebra is the same data as an $\EE_1$-algebra structure on an $\EE_1$-algebra. By induction, an $\EE_n$-algebra is the same data as an $\EE_1$-algebra structure on an $\EE_{n-1}$-algebra.

Informally, one may thus think of an $\EE_n$-algebra as an object of $\cC^{\tensor}$ equipped with $n$ mutually compatible multiplications, each of which is associative up to coherent homotopy. 
For example, an associative monoid in the category of associative monoids is simply a commutative monoid (Exercise~\ref{exercise. associative associative monoid is commutative}). This is the most common incarnation of the Eckmann-Hilton argument.

See~\cite{dunn-additivity} and a modern account in Section~5.1.2 of~\cite{higher-algebra}. 
\end{example}

\begin{example}
\index{Hochschild cochain complex}
The Hochschild {\em cochain} complex (not the chain complex) of any associative algebra (or dg-algebra, or $A_\infty$-algebra) is an example of an $\EE_2$-algebra in $\cC^{\tensor} = \chain_\kk^{\tensor_k}$. For definitions of the complex, see for example~\cite{hochschild-1945} or Chapter 9 of~\cite{weibel-book}.

It was historically observed that the cohomology of the Hochschild cochain complex (i.e., the Hochschild cohomology) of an associative algebra had an action by the homology groups of the framed embedding spaces of 2-dimensional disks. Deligne conjectured that this action lifts to the chain level---i.e., that the Hochschild cochain complex admitted an $\EE_2$-algebra structure---in 1993. Since then many proofs have been given of this conjecture, the first being Tamarkin's~\cite{tamarkin-1998-deligne-conjecture} as far as we know.
\end{example}

\begin{remark}
A reader may be familiar with the fact that the Hochschild cochain complex of an $A_\infty$-category $\cA$ may be computed as the natural transformations of the identity functor. (When $\cA$ has a single object, one recovers the Hochschild cochain complex of $\cA$ considered as an $A_\infty$-algebra.) That the Hochschild cochain complex has an $\EE_2$-algebra structure is compatible with Dunn additivity (Example~\ref{example.dunn-additivity}) in the following sense: The collection of self-natural transformations of the identity functor has two natural composition maps---one by composing natural transformations, and the other by composing the identity functor with itself (which induces an a priori different---but homotopic---multiplication operation from composition on the collection of natural transformations). 
\end{remark}

\begin{example}[$n$-fold loop spaces]\label{example:loop-spaces}
Let $\TTop^{\times}$ denote the $\infty$-category of topological spaces under direct product. 
Let $X$ be a topological space and choose a basepoint $x_0 \in X$. Let $\Omega X$ denote the space of continuous maps $\gamma:[0,1] \to X$ such that $\gamma(0) = \gamma(1) = x_0$. (This is what one normally calls the {\em based loop space} of $X$.) Then $\Omega X$ is an $\EE_1$-algebra in $\TTop^{\times}$.

This is a ``homotopical lift'' of the well-known statement that the fundamental group $\pi_1(X,x_0)$ of $X$ at $x_0$ is associative under path concatenation.
\index{$\Omega^n X$}
\index{based loop space}

More generally, let $\Omega^n X = \Omega(\Omega(\ldots (\Omega X)))$ denote the $n$-fold based loop space. (An element of $\Omega^n X$ is given by a continuous map $\gamma: [0,1]^n \to X$ for which the boundary of $[0,1]^n$ is sent to $x_0$.) Then $\Omega^n X$ is an $\EE_n$-algebra in $\TTop^{\times}$. 
\end{example}

\begin{example}[Configuration spaces as free algebras]\label{example:free-conf}
\index{configuration spaces}
The free $\EE_n$-algebra on one generator (in $\TTop^\times$) is the topological space
	\eqnn
	\coprod_{l \geq 0} \Conf_l(\RR^n)
	\eqnd
whose $l$th component is the space of unordered configurations of $l$ disjoint points in $\RR^n$. 
\end{example}

\begin{example}
Given any $\EE_n$-algebra $X$ in $\TTop^\times$, the singular chain complex $C_*(X;\kk)$ is an $\EE_n$-algebra in $\Chain_{\kk}^{\tensor_\kk}$. (This is true even when $\kk$ is not a field.) This follows from the fact that the assignment of a space to its singular chain complex is a lax symmetric monoidal functor of $\infty$-categories.

In particular, chains on based loop spaces are examples of $A_\infty$-algebras in chain complexes. 
\end{example}

\begin{remark}
$\EE_n$-algebras are often constructed as deformations of commutative or cocommutative objects. For example, the braided monoidal category of representations of $U_q(\fg)$ is constructed from deforming a cocommutative Hopf algebra (the universal enveloping algebra of $\fg$)~\cite{drinfeld-quantum-groups, jimbo-difference-analogue}.

Another source using combinatorially articulable deformation problems is found in~\cite{kapranov-kontsevich-soibelman}.
\end{remark}

\section{Framings and tangential $G$-structures}\label{section.framings}
Let's address this framing business.
\index{framing}

%We were motivated to make the category $\disk_{n,\fr}$ have an easy-to-enumerate list of objects. This motivation is a little disingenuous, because the data of an algebra over a complicated $\infty$-category may still be interesting and deserve study. But let's just say that we really want to recreate the notion of $\EE_n$-algebra from framed disks, for whatever reason.

Suppose you naively define a framed embedding to simply be one that respects framings, in the sense that
	\eqn\label{eqn:framing-naive}
	\xymatrix{
	TX \ar[r]^{\phi_X} \ar[d]^{Dj} & X \times \RR^n \ar[d]^{j \times \id_{\RR^n}}\\
	TY \ar[r]^{\phi_Y} & Y \times \RR^n
	}
	\eqnd
commutes. Here, $j: X\to Y$ is the smooth embedding, $Dj$ is its derivative, and the $\phi$ are the framings on $X$ and $Y$. The commutativity of~\eqref{eqn:framing-naive} is a property of $j$, and merits no extra data on $j$.

Already when $X = Y = \RR^n$, it becomes nearly impossible to find a $j$ making the above diagram commute for arbitrary choices of $\phi_X$ and $\phi_Y$. For example, endow $X$ with the canonical framing one constructs from the $\RR$-vector space structure on $X$. Then any smooth function $\alpha: \RR^n \to GL_n(\RR)$ defines a framing $\phi_Y$ on $Y=\RR^n$. It is a highly difficult differential geometry problem to find a map $j: X \to Y$ such that the derivative $Dj$ recovers exactly the matrices $\alpha$. (Indeed, this integrability problem almost always lacks a solution.) The upshot is that the category of framed disks would have many non-equivalent objects whose underlying manifold is $\RR^n$. 

On the other hand, to have an ``algebra'' structure, we need to be able to embed copies of $\RR^n \coprod \RR^n$ into $\RR^n$. To do this in a way satisfying~\eqref{eqn:framing-naive} is impossible if each component is given the vector space framing.

So we see that the naive notion of ``embeddings that respect framings'' will at least lead us down a complicated path; we can't enumerate objects easily, and articulating the multiplications $\RR^n \coprod \RR^n \to \RR^n$ seems to require us to grapple with this difficult enumeration.

So let us instead articulate some technology to simplify things. Any manifold $X$ comes equipped with a canonical continuous map classifying the tangent bundle:
	\eqnn
	\tau_X: X \to BGL_n(\RR).
	\eqnd
Here, $n = \dim X$ and $BGL_n(\RR)$ is the classifying space of principle $GL_n(\RR)$-bundles.

\begin{definition}\label{defn.G-structure}
\index{$G$-structure}
\index{$G$-structure!tangential}
Fix a topological group $G$ with a map $G \to GL_n(\RR)$. Note this induces a continuous map of classifying spaces $p: BG \to BGL_n(\RR)$. A {\em tangential $G$-structure} on a smooth manifold $X$ is the data of a map $X \to BG$, equipped with a homotopy for the composite $X \to BG \to BGL_n(\RR)$ to the map $X \to BGL_n(\RR)$ classifying the tangent bundle.

We draw this data as a triangle
	\eqnn
	\xymatrix{
	& BG \ar[d]^{p} \\
	X \ar[r]_-{\tau_X} \ar[ur]^{\phi} & BGL_n(\RR)
	}
	\eqnd
where the data of the homotopy is implicit.
\end{definition}

\begin{example}
The data of a homotopy $p \circ \phi \sim \tau_X$ reduces the structure group of $TX$ from $GL_n(\RR)$ to $G$. 
When $G = SO_n(\RR)$, the triangle above is a choice of orientation on $X$. When $G = \ast$ is trivial, the above is the data of a framing on $X$. 
\end{example}

In fact, manifolds with a framing arise as objects of a very natural fiber product of $\infty$-categories:

\begin{definition}[$\mfld_{n,\fr}$. See Definition~2.7 of~\cite{ayala-francis-topological} and Definition~5.0.2 of~\cite{aft-1}]\label{defn:mfld_fr}
\index{$\mfld_n$}
\index{$\mfld_{n,\fr}$}
\index{$\disk_{n,\fr}$}
Consider the $\infty$-category $\mfld_n$ whose objects are smooth $n$-dimensional manifolds and whose morphisms consist of all smooth embeddings; we note that the tangent bundle construction defines a functor
	\eqnn
	\mfld_n \to \TTop_{/BGL_n(\RR)},
	\qquad
	X \mapsto (\tau_X: X \to BGL_n(\RR))
	\eqnd
to the $\infty$-category of all topological spaces equipped with a map to $BGL_n(\RR)$. On the other hand, if one fixes a map $G \to GL_n(\RR)$ (and hence a map $BG \to BGL_n(\RR)$) we have an induced functor $\TTop_{/BG} \to \TTop_{/BGL_n(\RR)}$. When $G = \ast$ is trivial, we define $\mfld_{n,\fr}$ to be the pullback (i.e., fiber product)
	\eqnn
	\xymatrix{
	\mfld_{n,\fr} \ar[r] \ar[d] & \TTop \simeq \TTop_{/\ast} \ar[d] \\
	\mfld_{n} \ar[r]^{\tau} & \TTop_{/BGL_n(\RR)}.
	}
	\eqnd
of $\infty$-categories. Likewise, $\disk_{n,\fr}$ is the full subcategory of $\mfld_{n,\fr}$ obtained by pulling back as below:
	\eqnn
	\xymatrix{
	\disk_{n,\fr} \ar[r] \ar[d] & \TTop \simeq \TTop_{/\ast} \ar[d] \\
	\disk_{n} \ar[r]^{\tau} & \TTop_{/BGL_n(\RR)}.
	}
	\eqnd
\end{definition}

\begin{remark}
For slice categories in the $\infty$-categorical setting, see Definition~\ref{defn. slice categories}, Exercise~\ref{exercise. slice}, and Section~1.2.9 of~\cite{lurie-htt}. Let us informally state the following: Given two objects $(X \to B)$ and $(Y \to B)$ in $\cC_{/B}$, a morphism is given by a homotopy-commutative triangle
	\eqnn
	\xymatrix{
	X \ar[rr] \ar[dr] && Y \ar[dl] \\
	 & B
	}
	\eqnd
where both the map $X \to Y$ and the homotopy rendering the triangle homotopy-commutative are the data of the morphism. 
\end{remark}

\begin{remark}
Because we don't want to get bogged down in the homotopy-theoretic details, let me just point out that the notion of ``pullback'' of spaces is usually not invariant under homotopy equivalences. A common way to get around this issue is, when trying to form a pullback $A \times_C B$, to replace either the map $A \to C$ or $B \to C$ with a ``fibration,'' and then take the honest pullback. Such pullbacks are often called homotopy pullbacks.

A similar issue can arise in any homotopy-theoretic framework; let me just state that, in our examples, we can always assume that the maps between slice categories are appropriate fibrations for $\infty$-categories.

Then, morphism spaces of pullback $\infty$-categories can be computed as homotopy pullbacks of the original morphism spaces. 
\end{remark}

\begin{remark}\label{remark. framed embs of R^n contractible}
Fix $\RR^n \in \mfld_{n,\fr}$ and let us compute its endomorphism space. Because $\RR^n$ is contractible (and the $\infty$-category of spaces identifies weakly homotopy equivalent spaces), we have that
	\eqn\label{eqn: loops BGLn is GL_n}
	\hom_{\TTop_{/BGL_n(\RR)}}(\RR^n, \RR^n)
	\simeq
	\hom_{\TTop_{/BGL_n(\RR)}}(\ast, \ast)
	\simeq
	\Omega BGL_n(\RR)
	\simeq
	GL_n(\RR)
	\eqnd
where the $\Omega$ denotes the based loop space. We have also already seen in the proof of Lemma~\ref{lemma: SOn space of embeddings}---using the trick of translation and derivative-taking---that the inclusion
	\eqn\label{eqn: embeddings GLn}
	GL_n(\RR) \to \hom_{\mfld_n}(\RR^n,\RR^n)
	\eqnd
is a homotopy equivalence. Consider the homotopy pullback diagram of mapping spaces
	\eqnn
	\xymatrix{
	 \hom_{\mfld_{n,\fr}}(\RR^n,\RR^n) \ar[r] \ar[d] &
	 	\ast = \hom_{\TTop}(\ast,\ast) \ar[d] \\
	\hom_{\mfld_n}(\RR^n,\RR^n) \ar[r]^-{\sim}
	 	& 
	\hom_{\TTop_{/BGL_n(\RR)}}(\RR^n, \RR^n).
	}
	\eqnd
The bottom arrow is an equivalence by tracing through~\eqref{eqn: loops BGLn is GL_n} and~\eqref{eqn: embeddings GLn}. Since pullbacks of equivalences are equivalences, 
we conclude that $\hom_{\mfld_{n,\fr}}(\RR^n,\RR^n) \to \ast$ is an equivalence. That is, as promised before, the space of endomorphisms of framed $\RR^n$ is contractible.
\end{remark}

\begin{remark}
More generally, after fixing a continuous group homomorphism $G \to GL_n(\RR)$, one can define the $\infty$-category of $G$-structured manifolds by the pullback of $\mfld_n$ along $\TTop_{/BG} \to \TTop_{/BGL_n(\RR)}$. The same computation as above shows that the endomorphism space of $\RR^n$ in this category is weakly homotopy equivalent to the topological group $G$.
\end{remark}

\begin{remark}\label{remark:disk-G-algebras}
\index{$G$-structure!tangential}
Thus for any continuous group homomorphism $G \to GL_n(\RR)$, one can define the $\infty$-category of $\disk_{n,G}$ as a fiber product $\disk_n \times_{\TTop_{/BGL_n(\RR)}} \TTop_{/BG}$. Informally, an object is a disjoint union of $n$-dimensional Euclidean spaces, each equipped with a tangential $G$-structure. This still has a symmetric monoidal structure given by disjoint union, and one can study $\disk_{n,G}$-algebras, which are symmetric monoidal functors
	\eqnn
	\disk_{n,G}^{\coprod} \to \cC^{\tensor}.
	\eqnd
(For example, when $G=SO(n)$, we have the oriented disk-algebras.) One can define factorization homology as a left Kan extension as before (thereby obtaining invariants of $G$-structured $n$-dimensional manifolds) and---if $\tensor$ preserves sifted colimits in each variable---one can promote factorization homology to a symmetric monoidal functor, and one still has the $\tensor$-excision theorem. This is proven for example in~\cite{ayala-francis-topological,aft-2}.
\end{remark}

\begin{remark}
Tracing through the definition of $\mfld_{n,\fr}$ (Definition~\ref{defn:mfld_fr}), we may finally understand what we mean by equipping embeddings with compatibilities of framings:
Fix framings on manifolds $X$ and $Y$, and fix a smooth embedding $j: X \to Y$. Then a compatibility of $j$ with the framings is the data of higher homotopies rendering the following tetrahedral diagram homotopy-coherent:
	\eqnn
	\xymatrix{
		&BG \ar[dd] \\
		&	& Y \ar[ul]_{\phi_Y} \ar[dl]^{\tau_Y}\\
	X \ar[r]_-{\tau_X} \ar[uur]^{\phi_X} \ar@{->}[urr]
		& BGL_n(\RR)
	}
	\eqnd
This involves the data of the faces containing both $X$ and $Y$, and the three-cell defining the interior of the tetrahedron. The term ``homotopy coherent'' is imprecise; concretely, the above is a 3-simplex in the $\infty$-category $\TTop$. (See Exercise~\ref{exercise. slice}.)
\end{remark}

\clearpage
\section{Factorization homology}

A $\disk_{n,\fr}$-algebra is the local input data of an invariant of framed $n$-manifolds. As in the 1-dimensional case, we want to investigate whether we can extend a functor out of $\disk_{n,\fr}$ to a functor out of $\mfld_{n,\fr}$.  Factorization homology will be this global invariant.

\begin{remark}
In the previous talk, we took $\cC$ to be $\Vect_{\kk}^{\tensor_{\kk}}$ or $\Chain_\kk$. We could have taken $\cC$ to be any symmetric monoidal $\infty$-category such that $\cC$ has all sifted colimits, and so that $\otimes$ preserves sifted colimits in each variable. (See Section~\ref{section: sifted colimits} below.) As such---and to accommodate the examples of $E_n$-algebras from Section~\ref{section. examples of algebras}, we will henceforth consider factorization homology for any such choice of $\cC^{\tensor}$. You can pretend $\cC^{\tensor} = \Chain_\kk^{\tensor_{\kk}}$ if you prefer.
\end{remark}

\begin{definition}[\cite{ayala-francis-topological,aft-2}]
\index{factorization homology}
\index{factorization homology!with coefficients in $A$}
Let $\cC^{\tensor}$ be a symmetric monoidal $\infty$-category admitting all sifted colimits.
Fix an $\EE_n$-algebra in $\cC^{\tensor}$, which we will denote $A$ by abusing notation.
\emph{Factorization homology with coefficients in $A$} is the left Kan extension,
	\eqnn
		\xymatrix{
\disk_{n,\fr}\arw[r]^A\arw[d] & \cC\\
\mfld_{n,\fr}\arw@{-->}[ur]_{\int A} & 
}.
	\eqnd 
Given a framed manifold $X \in \mfld_{n,\fr}$, we let
	\eqnn
	\int_X A
	\eqnd
\index{$\int_X A$}
denote the value of factorization homology, and we call it {\em factorization homology of $X$ with coefficients in $A$}.
\end{definition}

\begin{remark}
\index{$G$-structure!tangential}
We may in general define factorization homology for manifolds with tangential $G$-structures as the left Kan extension of a $\disk_{n,G}^{\coprod}$-algebra along the inclusion $\disk_{n,G} \to \mfld_{n,G}$.
\end{remark}

\begin{remark}
The interested reader may consult Section~\ref{section: left kan extension} below for details on what a left Kan extension is.
\end{remark}

\begin{remark}
Factorization homology is always {\em pointed}, meaning that for any framed manifold $X$, the factorization homology $\int_X A$ is always equipped with a map from the monoidal unit of $\cC^{\tensor}$. This is because the empty set admits an embedding into any manifold, and uniquely so. 

As an example, if $\cC^{\tensor} = \Cat^{\times}$ is the $\infty$-category of categories, the monoidal unit is the trivial category with one object, and the pointing $\ast \to \int_X A$ picks out an object of the category $\int_X A$.

If $\cC^{\tensor} = \chain_{\kk}^{\tensor_\kk}$, then the pointing picks out a degree 0 cohomology class of $\int_X A$, specified by a map $\kk \to \int_X A$. 
\end{remark}

\begin{remark}\label{remark. factorization homology is symmetric monoidal}
Assume $\tensor$ commutes with sifted colimits in each variable. Then factorization homology can be made symmetric monoidal (Lemma~2.16 of~\cite{aft-2}). That is, one can supply natural equivalences
	\eqnn
	\int_{X \coprod Y} A
	\simeq
	\int_X A \bigotimes \int_Y A.
	\eqnd
Put another way, the left Kan extension can be upgraded to a symmetric monoidal left Kan extension:
	\eqnn
		\xymatrix{
\disk_{n,\fr}^{\coprod}\arw[r]^A\arw[d] & \cC^{\tensor}\\
\mfld_{n,\fr}^{\coprod}\arw@{-->}[ur]_{\int A} & 
}.
	\eqnd 
For such a target $\cC^{\tensor}$, it suffices to compute factorization homology on connected manifolds.
\end{remark}

When factorization homology can be made symmetric monoidal, here is how one in principle computes factorization homology:

\begin{theorem}[$\otimes$-Excision, \cite{ayala-francis-topological,aft-2}]\label{theorem: excision}
\index{excision!$\tensor$-excision}
Fix a framed $n$-manifold $X$ and a decomposition 
	\eqn\label{eqn.collar-gluing}
	X=X_0\bigcup_{W\times \RR}X_1
	\eqnd
where each of $X_0, X_1$ is an open subset of $X$, and we have given their intersection $X_0 \cap X_1 \cong W \times \RR$ a direct product decomposition as a smooth manifold.

Let $\cC^{\tensor}$ be a symmetric monoidal $\infty$-category admitting all sifted colimits and such that $\tensor$ preserves sifted colimits in each variable. 
Then there is an equivalence
	\eqn\label{eqn.excision}
		\int_XA\simeq \int_{X_0}A\bigotimes_{\int_{W\times\RR}A}\int_{X_1}A.
	\eqnd 
\end{theorem}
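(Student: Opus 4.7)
The plan is to unpack both sides of~\eqref{eqn.excision} as colimits and to exhibit an equivalence between them. By definition of left Kan extension, the left-hand side is
\begin{equation*}
\int_X A \;\simeq\; \colim_{(U \hookrightarrow X) \in (\disk_{n,\fr})_{/X}} A(U).
\end{equation*}
For the right-hand side, the $\RR$-factor of $W\times\RR$ endows $\int_{W\times\RR} A$ with the structure of an $E_1$-algebra in $\cC^\tensor$: the embedding $\RR\sqcup\RR \hookrightarrow \RR$ induces $(W\times\RR)\sqcup(W\times\RR) \hookrightarrow W\times\RR$, and functoriality of $\int A$ supplies the associativity coherences. The collar inclusions into $X_0$ and $X_1$ make $\int_{X_0}A$ and $\int_{X_1}A$ into right and left modules. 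Then the relative tensor product on the right-hand side of~\eqref{eqn.excision} is presented as the geometric realization of the two-sided bar construction $B_\bullet$, with $B_k = \int_{X_0}A \otimes (\int_{W\times\RR}A)^{\otimes k} \otimes \int_{X_1}A$.

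To realize $B_\bullet$ geometrically, I define a simplicial object $X_\bullet$ in $\mfld_{n,\fr}^{\coprod}$ by
\begin{equation*}
X_k \;=\; X_0 \;\sqcup\; (W\times\RR)^{\sqcup k} \;\sqcup\; X_1,
\end{equation*}
with inner face maps induced by the $E_1$-multiplication $(W\times\RR)\sqcup(W\times\RR)\hookrightarrow W\times\RR$ on adjacent copies, outer face maps induced by the two collar inclusions into $X_0$ and $X_1$, and degeneracies induced by the unit $\emptyset\hookrightarrow W\times\RR$. Because $\int A$ is symmetric monoidal (Remark~\ref{remark. factorization homology is symmetric monoidal}, which uses the sifted-colimit hypothesis on $\tensor$), applying it termwise to $X_\bullet$ reproduces exactly $B_\bullet$. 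The theorem therefore reduces to the purely manifold-level equivalence
\begin{equation*}
\int_X A \;\simeq\; \colim_{[k]\in\Delta^{\op}} \int_{X_k} A.
\end{equation*}

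The heart of the proof, and the main obstacle, is this last equivalence, which I would attack by a cofinality argument on disk overcategories. The collar-gluing presentation of $X$ together with $X_\bullet$ induces a functor from an appropriate colimit of the overcategories $(\disk_{n,\fr})_{/X_k}$ to $(\disk_{n,\fr})_{/X}$; the claim is that this functor is cofinal, hence induces an equivalence after applying $A$ and taking colimits. The key geometric input is that any finite disjoint union of framed disks embedded in $X$ can, after a contractible space of isotopies, be arranged so that each disk lies entirely in $X_0$, entirely in $X_1$, or entirely in one of finitely many thin slabs $W\times(a_i,b_i)\subset W\times\RR$ that cleanly separate the configuration into ``layers.'' Weak contractibility of this parameter space is the standard collar-transversality argument, and combined with an $\infty$-categorical Quillen's Theorem A it yields the required cofinality. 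The delicate point is that this contractibility must be verified in the framed setting; this is handled by observing, as in the proof of Lemma~\ref{lemma: framed space of embeddings} and Remark~\ref{remark. framed embs of R^n contractible}, that the relevant spaces of collar-transverse framed configurations fiber over contractible spaces of compatible framings on the component pieces, so contractibility propagates. Once cofinality is established, the identification of $\int A (X_\bullet)$ with $B_\bullet$ closes the argument.
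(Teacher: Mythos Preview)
The paper does not supply its own proof of this theorem; it states the result with references to~\cite{ayala-francis-topological} and~\cite{aft-2} and then only sketches the auxiliary Proposition that $\int_{W\times\RR}A$ is an $E_1$-algebra with $\int_{X_i}A$ as modules. So there is no in-paper argument to compare against directly.

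That said, your outline is essentially the strategy of the cited references. The reduction to a bar construction via the simplicial manifold $X_\bullet$, the use of symmetric monoidality of $\int A$ to identify $\int_{X_\bullet}A$ with $B_\bullet$, and the appeal to a finality statement over disk overcategories are all the right moves. Two comments on where your sketch is thinnest. First, the cofinality you invoke is not quite a statement about ``a functor from an appropriate colimit of the overcategories''; what one actually checks is that the functor from the unstraightening (Grothendieck construction) of $[k]\mapsto (\disk_{n,\fr})_{/X_k}$ to $(\disk_{n,\fr})_{/X}$ is final, and this is the step where Quillen's Theorem~A for $\infty$-categories is applied. Second, your phrasing ``after a contractible space of isotopies'' is slightly off: one does not need a single contractible space of rearrangements, but rather that for each disk configuration $U\hookrightarrow X$ the relevant comma category is weakly contractible. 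In the references this is packaged either as a pushforward formula along the projection $X\to[-1,1]$ to a stratified interval (as in~\cite{ayala-francis-topological}) or via the machinery of collar-gluings and siftedness of $\disk_{/X}$ (as in~\cite{aft-2}); your direct cofinality argument is morally equivalent but would need that comma-category contractibility spelled out carefully rather than asserted as a ``standard collar-transversality argument.''
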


Both the decomposition~\eqref{eqn.collar-gluing} and the tensor product on the righthand side of~\eqref{eqn.excision} warrant an explanation. First, the decomposition:

\begin{remark}\label{remark. framing on W x R}
Note that $X_0$ and $X_1$ form an open cover of $X$, and the crucial part of the decomposition is the choice of direct product decomposition $X_0 \cap X_1 \cong W \times \RR$, where $W$ is an $(n-1)$-dimensional manifold. Of course, being open subsets of $X$, each of $X_0, X_1, W \times \RR$ inherits a framing from $X$. Moreover, since $\RR$ is contractible, the framing map from $W \times \RR$ factors, up to homotopy, through the projection $W \times \RR \to W$. The direct product decomposition thus allows us to interpret the inherited framing on $W \times \RR$ as a framing on the direct sum bundle $TW \oplus \underline{\RR}$ over $W$. Note in particular that $W$ itself need not admit a framing. See also Definition~\ref{defn.n-dim framing}.

This discussion also holds if one replaces the notion of framing by any other $G$-structure (Definition~\ref{defn.G-structure}).
\end{remark}

\begin{remark}
The decomposition~\eqref{eqn.collar-gluing} is an example of a {\em collar-gluing} in the sense of~\cite{aft-1, aft-2}. 
\index{collar-gluing}
\end{remark}

Now, let us explain the tensor product in~\eqref{eqn.collar-gluing}:

\begin{proposition}
$\int_{W\times\RR}A$ is an $\EE_1$-algebra. Moreover, each $\int_{X_i}A$ for $i=0,1$ is a module over $\int_{W\times\RR}A$. 
\end{proposition}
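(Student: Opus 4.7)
The plan is to produce both the $\EE_1$-algebra structure on $\int_{W\times\RR}A$ and the module structures on $\int_{X_i}A$ by building suitable symmetric monoidal functors into $\mfld_{n,\fr}^{\coprod}$ out of low-dimensional auxiliary disk categories and then composing with factorization homology. This reduces the entire problem to a ``functor of operads'' statement that lives inside the manifold category, where the algebraic coherences come for free from the extension of Theorem~\ref{theorem.associativity} to $\infty$-categories.

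For the first claim, I would construct a symmetric monoidal functor $\Phi \colon \disk_{1,\orr}^{\coprod} \to \mfld_{n,\fr}^{\coprod}$ sending $\RR^{\coprod k}$ to $(W\times\RR)^{\coprod k}$, and an oriented embedding $j\colon \RR^{\coprod k}\to\RR^{\coprod l}$ to the product embedding $\id_W \times j$. By Remark~\ref{remark. framing on W x R}, the framing on $W\times\RR$ inherited from $X$ factors up to homotopy through the projection $W\times\RR \to W$, so $\id_W\times j$ admits canonical framing-compatibility data specified by a contractible space of choices. Postcomposing with factorization homology yields a symmetric monoidal functor $\disk_{1,\orr}^{\coprod} \to \cC^{\tensor}$, which by Theorem~\ref{theorem.associativity} (in its $\infty$-categorical upgrade identifying $\disk_{1,\orr}$-algebras with $\EE_1$-algebras) is precisely an $\EE_1$-algebra structure on the object $\int_{W\times\RR}A$.

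For the module structure on $\int_{X_0}A$, I would extend $\Phi$ by first passing to the ``one-sided'' variant $\disk_{1,\orr}^{+,\coprod}$ whose objects are finite disjoint unions of open oriented copies of $\RR$ together with at most one marked object $\sfs$ representing a half-line $(-\infty,0]$; morphisms are oriented embeddings preserving $\sfs$, where copies of $\RR$ may only embed into the interior $(-\infty,0)$. By the standard operadic comparison, a symmetric monoidal functor $\disk_{1,\orr}^{+,\coprod} \to \cC^{\tensor}$ is the data of an $\EE_1$-algebra together with a right module. I would then extend $\Phi$ by sending $\sfs$ to $X_0 \in \mfld_{n,\fr}$, using the direct-product decomposition $X_0 \cap X_1 \cong W\times\RR$ to choose a collar $W\times(-\infty,0] \hookrightarrow X_0$, and interpreting an embedding $\RR^{\coprod k} \sqcup \sfs \hookrightarrow \sfs$ as an embedding $(W\times\RR)^{\coprod k} \sqcup X_0 \hookrightarrow X_0$ that inserts the disks into the collar (after an inward push of $X_0$ making room). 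Composing with $\int A$ yields the required $\int_{W\times\RR}A$-module structure on $\int_{X_0}A$; the case $i=1$ is entirely analogous after reversing orientations.

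The main obstacle is not algebraic but homotopy-theoretic: $\Phi$ and its one-sided extension must be functors of $\infty$-categories, so I must lift every isotopy of oriented 1-disk embeddings to a family of framed $n$-dimensional embeddings, coherently across all higher simplices. The pullback description of $\mfld_{n,\fr}$ in Definition~\ref{defn:mfld_fr} makes this tractable: because the framing on $W \times \RR$ is pulled back from $W$ (using contractibility of the $\RR$-factor), and the collar of $X_i$ is likewise a product, each tetrahedron of framing-compatibility data in Definition~\ref{defn:mfld_fr} is specified up to a contractible space of choices. Making this rigorous either requires explicit constructions in the simplicial model of the slice categories of Definition~\ref{defn. slice categories}, or an appeal to the systematic machinery developed in~\cite{ayala-francis-topological, aft-2} for lifting geometric constructions on manifolds to honest functors of $\infty$-categories.
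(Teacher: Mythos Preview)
Your proposal is correct and follows essentially the same approach as the paper's sketch: both construct the $\EE_1$-structure via the symmetric monoidal functor $\disk_{1,\orr}^{\coprod}\to\mfld_{n,\fr}^{\coprod}$, $\RR^{\coprod k}\mapsto (W\times\RR)^{\coprod k}$, $j\mapsto\id_W\times j$ (invoking Remark~\ref{remark. framing on W x R} for framings), and then compose with $\int A$. For the module structure the paper argues directly with the collar embeddings $X_i\coprod(W\times\RR)\hookrightarrow X_i$ obtained by ``squeezing,'' whereas you package the same geometry as a functor out of an augmented category $\disk_{1,\orr}^{+,\coprod}$; this is a cosmetic repackaging of the same construction, and both the paper and you correctly flag that the remaining work is the $\infty$-categorical coherence of the framing data, deferring to~\cite{ayala-francis-topological,aft-2}.
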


\begin{proof}[Sketch.]
Given $W$, one has an induced symmetric monoidal functor
	\eqnn
	\disk_{1,\orr}^{\coprod} \to \mfld_{n,\fr}^{\coprod}
	\qquad
	\RR^{\coprod k} \mapsto W \times \RR^{\coprod k}.
	\eqnd
(It follows from Remark~\ref{remark. framing on W x R} that the embeddings $ \RR^{\coprod k} \to \RR$ indeed induce maps $W \times \RR^{\coprod k} \to W \times \RR$ with framing compatibilities.)  So we see that $W \times \RR$ is an $\EE_1$-algebra (in $\mfld_{n,\fr}^{\coprod}$). Because factorization homology can be made symmetric monoidal by Remark~\ref{remark. factorization homology is symmetric monoidal}, the composition
	\eqnn
	\disk_{1,\orr}^{\coprod} \xrightarrow{W \times -} \mfld_{n,\fr}^{\coprod} \xrightarrow{\int A} \cC^{\tensor}
	\eqnd
	 exhibits the $\EE_1$-algebra structure on $\int_{W \times \RR} A$. 

(If you don't like the above paragraph for whatever reason, 
just consider those embeddings
	\eqnn
	j:
	(W \times \RR)
	\coprod
	\ldots
	\coprod (W \times \RR)
	\to
	W \times \RR
	\eqnd
which are the identity on the $W$ direction. That is, they are given as a direct product
	$
	\id_W \times h
	$
where $h: \RR \coprod \ldots \coprod \RR \to \RR$ is an orientation-respecting embedding. The images of these embeddings under $\int A$ exhibit the ``associative'' algebra structure on $\int_{W \times \RR} A$. 

On the other hand, because $W \times \RR$ is a collar for $X_1$, we have embeddings
	\eqnn
	\rho:
	X_1 \coprod (W \times \RR)
	\to
	X_1
	\eqnd
given by ``squeezing'' $X_1$ into itself along the collar, then inserting a copy of $W \times \RR$ in the available collar-space.

\begin{figure}
\caption{
The left module structure on $X_1$.
On the right, all of $X_1$ has been ``squeezed'' into $X_1$, but we are showing in green only what happens to the indicated collaring region of $X_1$. The grey cylinder is the manifold $W \times \RR$, and on the right we have indicated its image under $\rho$.}\label{image. module action of collars}
\eqnn
    \begin{overpic}[scale=0.55]{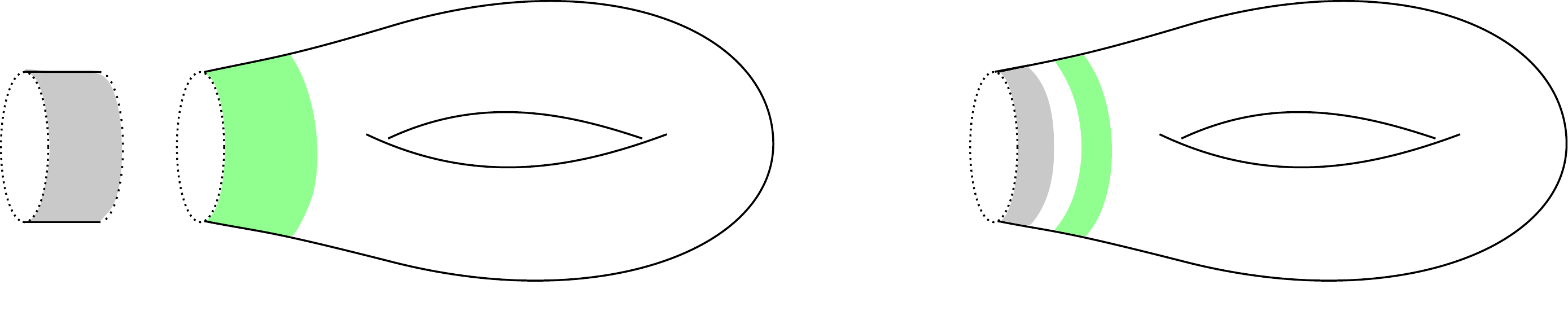}
    \put(52,10){\huge $\hookrightarrow$} 
    \put(0,2){$W\times \R$}
    \put(30,5){$X_1$}
    \put(80,5){$X_1$}
    \end{overpic}
\eqnd
\end{figure}

Then the functor $\int A$ exhibits maps
	\eqnn
	\int_{W \times \RR} A \tensor \int_{X_1} A 
	\simeq 
	\int_{(W \times \RR) \coprod X_1} A 
	\to
	\int_{X_1}A
	\eqnd
which gives $\int_{X_1} A$ a module structure. The same argument shows the module structure on $\int_{X_0} A$.

Note that we have not been precise about how to cohere these embeddings and their framing data; nor have we defined what it precisely means to be a module over an algebra in a symmetric monoidal $\infty$-category (see Section~4.2 of~\cite{higher-algebra}). For these reasons, this proof remains a sketch; but we hope the reader sees the rough idea. 
\end{proof}

\begin{remark}
Let us explain the tensor product in~\eqref{eqn.excision}.
The word ``tensor'' is typically used for usual linear objects over some base ring $R$. However, there are notions of tensor product for modules in arbitrary monoidal categories and arbitrary monoidal $\infty$-categories admitting sifted colimits; given a left module and a right module over an algebra, one writes down a simplicial object called the {\em bar construction}. When sifted colimits (and in particular colimits of simplicial objects) exist, the colimit, or geometric realization, of the bar construction is what one usually calls the tensor product of two modules over an algebra. Most readers will not lose any intuition by imagining that the tensor product in Theorem~\ref{theorem: excision} is simply a derived tensor product.
\end{remark}

\subsection{Examples}

Factorization homology exhibits mapping group actions:

\begin{example}
Fix a $\disk_{2,\orr}$-algebra $\cal{R}$ in $\cC^{\tensor}$. For any oriented genus $g$ surface $\Sigma_g$, let $\diff^+(\Sigma^g)$ denote the space of orientation-preserving diffeomorphisms of $\Sigma_g$. Then factorization homology induces a map
	\eqnn
		\hom_{\mfld_{2,\orr}}(\Sigma_g,\Sigma_g) = \diff^+(\Sigma_g)\rta \hom_\cC(\int_{\Sigma_g}\cal{R},\int_{\Sigma_g}\cal{R}).
	\eqnd 
Taking connected components, we get an action of the mapping class group on $\pi_0$ of the right hand side.

When $\cC^{\tensor}$ is the $\infty$-category of $k$-linear categories, ~\cite{bzbj} recovers well-known mapping class group actions on certain invariants of quantum groups.
\end{example}

Factorization homology can also be computed iteratively for product manifolds:

\begin{example}
Fix the product framing of the torus $S^1 \times S^1 = T^2$. For an $\EE_2$-algebra $A$, let $B = \int_{S^1 \times \RR} A$ be Hochschild chains, which now has an $E_1$-algebra structure by virtue of the $\RR$ factor. We have 
	\eqnn
		\int_{T^2}A
		\simeq  
			\int_{S^1\times \RR}A
				\bigotimes_{\int_{(S^1\times \RR)^{\sqcup_2}}A}
			\int_{S^1\times \RR}A
		\simeq 
			B
				\bigotimes_{
					B \tensor B^{\op}
					}
			B
	\eqnd 
is Hochschild chains of Hochschild chains. 
\end{example}

This example can also be exhibited using the following:

\begin{theorem}[Fubini Theorem]
\index{Fubini Theorem}
There is an equivalence
	\eqnn
		\int_{X\times Y}A\simeq\int_X\int_{Y\times{\RR}^{\dim X}}A
	\eqnd 
\end{theorem}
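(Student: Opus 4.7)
Set $k = \dim X$ and $n = k + \dim Y$, and fix an $\EE_n$-algebra $A$ in $\cC^{\tensor}$. The plan is to exhibit both sides as the value at $X$ of a symmetric monoidal functor $\mfld_{k,\fr}^{\coprod} \to \cC^{\tensor}$ and then invoke the universal property of factorization homology. First I would verify that $B := \int_{Y \times \RR^k} A$ carries a natural $\disk_{k,\fr}$-algebra structure. This uses the same device as the proof earlier that $\int_{W \times \RR} A$ is an $\EE_1$-algebra: the assignment $U \mapsto Y \times U$ defines a symmetric monoidal functor $\disk_{k,\fr}^{\coprod} \to \mfld_{n,\fr}^{\coprod}$ (using the product framing on $Y \times U$), and postcomposing with $\int A$ yields a $\disk_{k,\fr}$-algebra whose value on $\RR^k$ is $B$. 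This makes the right-hand side $\int_X B$ well-defined.

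Next, I would consider the symmetric monoidal functor
$$\Phi : \mfld_{k,\fr}^{\coprod} \to \cC^{\tensor}, \qquad X' \mapsto \int_{X' \times Y} A.$$
It is symmetric monoidal by Remark~\ref{remark. factorization homology is symmetric monoidal}, since $(X' \coprod X'') \times Y \cong (X' \times Y) \coprod (X'' \times Y)$ as framed $n$-manifolds. Its restriction to $\disk_{k,\fr}^{\coprod}$ agrees with $B$ in the sense that $\Phi((\RR^k)^{\coprod m}) \simeq B^{\otimes m}$. The crucial step is verifying that $\Phi$ satisfies $\tensor$-excision: given a collar-gluing $X' = X_0' \cup_{W' \times \RR} X_1'$ in $\mfld_{k,\fr}$, crossing with $Y$ produces a collar-gluing in $\mfld_{n,\fr}$,
$$X' \times Y \; \cong \; (X_0' \times Y) \cup_{(W' \times Y) \times \RR} (X_1' \times Y),$$
with framings inherited from $X' \times Y$. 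Applying Theorem~\ref{theorem: excision} to this collar-gluing gives an equivalence that is precisely $\tensor$-excision for $\Phi$ with respect to the $\EE_1$-algebra $\Phi(W' \times \RR)$.

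Finally, I would invoke the characterization of factorization homology as the essentially unique symmetric monoidal extension of a disk-algebra satisfying $\tensor$-excision (a standard consequence of the left Kan extension definition; see~\cite{aft-2}). This forces an equivalence $\Phi \simeq \int_{(-)} B$ of functors $\mfld_{k,\fr}^{\coprod} \to \cC^{\tensor}$, and evaluation at $X$ delivers the Fubini equivalence. The main obstacle I anticipate is the coherence comparison between two a priori different $\EE_1$-algebra structures on $\Phi(W' \times \RR) = \int_{W' \times Y \times \RR} A$: one arising from the role of the $\RR$-factor in $\tensor$-excision on $\mfld_{n,\fr}$, the other from its identification with $\int_{W' \times \RR} B$ coming from $\tensor$-excision on $\mfld_{k,\fr}$. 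Matching these canonically reduces to tracking the interchange of the product-with-$Y$ functor, disjoint union, and the framing-compatibility data for the relevant embeddings; this is dictated by the symmetric monoidal functoriality of $\int A$ but is the one genuine piece of bookkeeping the argument requires.
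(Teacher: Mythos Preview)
The paper does not supply its own proof of this theorem; it only cites Proposition~3.23 of~\cite{ayala-francis-topological} and Corollary~2.29 of~\cite{aft-2}. Your strategy is exactly the one used there: define $\Phi(X') = \int_{X'\times Y} A$, verify it is symmetric monoidal and $\tensor$-excisive on $\mfld_{k,\fr}$, observe that its restriction to disks recovers the $\EE_k$-algebra $B = \int_{Y\times\RR^k} A$, and then appeal to the classification of $\tensor$-excisive theories (Theorem~\ref{theorem.excisives-are-fact-hom} in this paper) to identify $\Phi$ with $\int_{(-)} B$. You have also correctly isolated the one genuine technical point, namely the comparison of the two $\EE_1$-structures on $\int_{W'\times Y\times\RR} A$ and the attendant module structures; in the cited references this is handled by the naturality of the classification equivalence and the functoriality of the product-with-$Y$ construction, rather than by an ad hoc check. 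Your outline is correct and matches the literature.
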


See Proposition~3.23 of~\cite{ayala-francis-topological} and Corollary~2.29 of~\cite{aft-2}.

\begin{remark}
To make sense of the Fubini theorem, note that factorization homology for $Y \times \RR^{\dim X}$ has the structure of an $\EE_d$-algebra for $d = \dim X$.
\end{remark}

Factorization homology also computes mapping spaces: (Of course, it is an old problem of topology to be able to compute invariants of mapping spaces.)

\begin{theorem}[Non-abelian Poincar\'e Duality]
\index{Non-abelian Poincar\'e Duality}
Let $\cC^{\tensor} = \TTop^{\times}$. Fix a topological space $X$ which has trivial homotopy groups in dimensions $\leq n-1$, choose a basepoint $x_0 \in X$, and consider the $\EE_n$-algebra $\Omega^n X$ (Example~\ref{example:loop-spaces}). Then for any framed $n$-manifold $M$, we have that
	\eqnn
	\int_M \Omega^n X \simeq Map_c(M,X).
	\eqnd
That is, factorization homology is homotopy equivalent to the spaces of compactly supported maps from $M$ to $X$. (Here, compact support of $f: M \to X$ means that outside some compact subset of $M$, $f$ is constant with value given by the basepoint of $X$.)
\end{theorem}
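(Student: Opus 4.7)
The plan is to construct a symmetric monoidal functor $F\colon \mfld_{n,\fr}^{\sqcup} \to \TTop^{\times}$ by $M \mapsto \Map_c(M, X)$, identify $F$ on disks with $\Omega^n X$ as an $\EE_n$-algebra, and verify that $F$ satisfies $\otimes$-excision. The universal property of left Kan extension gives a canonical comparison map $\alpha \colon \int_{(-)} \Omega^n X \to F$ extending the identification on disks; once both sides are known to be symmetric monoidal and $\otimes$-excisive, a handle-by-handle induction (or, equivalently, the characterization of factorization homology among $\otimes$-excisive symmetric monoidal extensions of a $\disk_{n,\fr}$-algebra) forces $\alpha$ to be an equivalence.

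First I would make $F$ a symmetric monoidal functor. Compactly supported maps are covariantly functorial along open embeddings $j \colon M \hookrightarrow N$ by extension by the basepoint: $j_\ast f$ agrees with $f \circ j^{-1}$ on $j(M)$ and is constantly $x_0$ off $j(M)$, which is continuous because $f$ has compact support. The natural homeomorphism $\Map_c(M \sqcup M', X) \cong \Map_c(M, X) \times \Map_c(M', X)$ upgrades $F$ to a symmetric monoidal functor of $\infty$-categories.

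Next I would identify $F|_{\disk_{n,\fr}}$. Because $\RR^n$ has one-point compactification $S^n$, there is a natural homeomorphism $\Map_c(\RR^n, X) \cong \Map_\ast(S^n, X) = \Omega^n X$. Under this identification, the multiplications on $F(\RR^n)$ induced by framed embeddings $(\RR^n)^{\sqcup k} \hookrightarrow \RR^n$ correspond precisely to the standard pinching multiplication on $\Omega^n X$: extending by $x_0$ outside the embedded disks is exactly the collapse of $S^n$ onto the wedge of $k$ copies of $S^n$ determined by the images of the disks. Thus $F|_{\disk_{n,\fr}} \simeq \Omega^n X$ as $\EE_n$-algebras in $\TTop^{\times}$, yielding the comparison map $\alpha$.

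It remains to verify $\otimes$-excision for $F$. Given a collar-gluing $M = M_0 \cup_{W \times \RR} M_1$, the fact that a compactly supported map out of such an open-cover pushout is equivalent to the data of its restrictions to $M_0$ and $M_1$ agreeing on $W\times\RR$ yields a homotopy pullback
\[
F(M) \;\simeq\; F(M_0) \times^{h}_{F(W \times \RR)} F(M_1).
\]
The remaining task is to identify this homotopy pullback with the two-sided bar construction $F(M_0) \otimes_{F(W \times \RR)} F(M_1)$ appearing in Theorem~\ref{theorem: excision}. This is the step where the $(n-1)$-connectivity of $X$ is essential: since $F(W \times \RR) \simeq \Omega \Map_c(W, X)$ is a loop space, it is grouplike as an $\EE_1$-algebra, and its action on $F(M_0)$ and $F(M_1)$ is ``principal'' in the sense that the canonical comparison from the bar construction to the homotopy pullback is an equivalence. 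This identification of bar construction with homotopy pullback is the main obstacle, and the precise point at which the connectivity hypothesis on $X$ is used; with it in hand, excision follows, and induction along a handle presentation of $M$ reduces the equivalence $\alpha_M$ to the tautological case $M = \RR^n$.
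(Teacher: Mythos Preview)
The paper does not prove this theorem; it merely states it and cites Salvatore, Lurie, Ayala--Francis, and~\cite{aft-2}. Your high-level strategy---build $M\mapsto\Map_c(M,X)$ as a symmetric monoidal functor, identify it with $\Omega^nX$ on disks, verify $\otimes$-excision, then invoke Theorem~\ref{theorem.excisives-are-fact-hom}---is exactly the approach of~\cite{ayala-francis-topological}, so in outline you are on the right track.

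There is, however, a genuine error in your excision step. You claim restriction yields a homotopy pullback $F(M)\simeq F(M_0)\times^h_{F(W\times\RR)}F(M_1)$, but restriction along an open inclusion does not preserve compact support (if $K\subset M$ is compact and $U\subset M$ open, $K\cap U$ need not be compact in $U$), so the maps you need to form this square do not exist. Even granting some ad~hoc fix, the pullback is the wrong object: for $n=1$, $X=S^1$, and $M=S^1$ cut into two arcs with $W$ two points, one has $F(M)\simeq LS^1\simeq S^1\times\ZZ$, whereas $F(M_0)$, $F(M_1)$, and $F(W\times\RR)$ are all homotopy-discrete, so any homotopy pullback among them is discrete as well. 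The correct argument stays entirely on the covariant side: extension by basepoint furnishes embeddings $M_0\sqcup(W\times\RR)^{\sqcup k}\sqcup M_1\hookrightarrow M$ and hence a canonical map
\[
\bigl|\mathrm{Bar}_\bullet\bigl(F(M_0),\,F(W\times\RR),\,F(M_1)\bigr)\bigr|\;\longrightarrow\;F(M),
\]
and excision is the assertion that \emph{this} map is an equivalence. Your instinct about where the $(n-1)$-connectivity of $X$ enters---making $F(W\times\RR)$ grouplike so that the bar construction behaves like a homotopy quotient by a free action---is correct, but the comparison is between a homotopy colimit and $F(M)$, not between a colimit and a limit.
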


\begin{remark}
The above equivalence is called {\em non-abelian Poincar\'e duality} and is a theorem due in several guises to Salvatore~\cite{salvatore}, Lurie~\cite{higher-algebra}, Ayala-Francis~\cite{ayala-francis-topological}, and Ayala-Francis-T~\cite{aft-2}.

In the case $n=1$ with $M=S^1$, it states that Hochschild homology of a based loop space $\Omega X$ is homotopy equivalent to the free loop space of $X$ (for $X$ connected). This recovers a theorem of Burghelea-Fiedorowicz~\cite{burghelea-fiedorowicz} and Goodwillie~\cite{goodwillie} after applying the singular chains construction. 
\end{remark}

Factorization homology is intimately tied to configuration spaces, which are also recurring characters in topology:
\index{configuration spaces}

\begin{theorem}\label{theorem:conf-as-fact-hom}
Let $\cC^{\tensor} = \TTop^{\times}$ once more, and let $A$ be the free $E_n$-algebra on one generator (Example~\ref{example:free-conf}). Then
	\eqnn
	\int_M A
	\simeq
	\coprod_{l \geq 0} \conf_l(M).
	\eqnd
That is, factorization homology of $M$ with coefficients in the free $E_n$-algebra on one generator is homotopy equivalent to the disjoint union over $l \geq 0$ of the configuration spaces of $l$ disjoint, unordered points in $M$. 
\end{theorem}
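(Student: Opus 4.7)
The plan is to verify the equivalence by appealing to the universal property of factorization homology: the left Kan extension $\int A$ is (essentially by construction) the unique symmetric monoidal functor $\mfld_{n,\fr}^{\coprod} \to \TTop^{\times}$ extending $A$ along $\disk_{n,\fr}^{\coprod} \hookrightarrow \mfld_{n,\fr}^{\coprod}$ and satisfying $\tensor$-excision (Theorem~\ref{theorem: excision}). So I would first define the candidate functor $F : \mfld_{n,\fr}^{\coprod} \to \TTop^{\times}$ by $M \mapsto \coprod_{l \geq 0} \conf_l(M)$, where an embedding $j : M \hookrightarrow N$ acts by sending a configuration $\{x_1,\ldots,x_l\}$ to $\{j(x_1),\ldots,j(x_l)\}$. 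Symmetric monoidality is the classical identity $\conf_l(M \sqcup N) \cong \coprod_{i+j=l} \conf_i(M) \times \conf_j(N)$, which after summing over $l$ becomes $F(M\sqcup N) \simeq F(M) \times F(N)$.

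Next, I would check that $F$ restricts to $A$ on $\disk_{n,\fr}^{\coprod}$. By Example~\ref{example:free-conf}, the free $\EE_n$-algebra on one generator in $\TTop^\times$ is the space $\coprod_l \conf_l(\RR^n)$, which is exactly $F(\RR^n)$. The identification is compatible with the symmetric monoidal structure and with the $\disk_{n,\fr}$-algebra structure: any framed embedding $(\RR^n)^{\sqcup k} \hookrightarrow \RR^n$ acts on configurations by sending $k$ configurations to their union, which is precisely the image of the multiplication map on the free algebra under the equivalence $\disk_{n,\fr}\text{-Alg}(\TTop^\times) \simeq \EE_n\text{-Alg}(\TTop^\times)$.

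The main obstacle is verifying $\tensor$-excision for $F$: for a collar-gluing $M = M_0 \cup_{W \times \RR} M_1$ one must exhibit an equivalence
\eqn
F(M) \simeq F(M_0) \bigotimes_{F(W \times \RR)}^{\LL} F(M_1),
\eqnd
where the right-hand side is computed in $\TTop^\times$ as a two-sided bar construction with respect to the $\EE_1$-algebra structure on $F(W \times \RR)$ coming from the $\RR$-direction. To do this, I would model the bar construction as the geometric realization of the simplicial space whose $k$-simplices parametrize labeled configurations in $M_0 \sqcup (W\times\RR)^{\sqcup k} \sqcup M_1$, with face maps gluing adjacent collars via the framed embeddings into $M$. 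A configuration in $M$ can be partitioned according to whether each point lies in $M_0$, in $M_1$, or inside the collar $W \times \RR$ (with some arbitrary choice in the overlap), and the space of such partitions of a fixed configuration is contractible; this furnishes a deformation retraction of the bar construction onto $F(M)$. Equivalently, one can isotope any configuration in $M$ along the $\RR$-coordinate of the collar to push points out of $W \times \RR$, which shows that the obvious map from the bar construction to $F(M)$ is a weak equivalence on each stratum.

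Having established excision, I would conclude by induction on a handle decomposition (legitimate by Convention~\ref{convention:manifolds-small}): both $\int A$ and $F$ are symmetric monoidal, agree on $\disk_{n,\fr}$, and satisfy $\tensor$-excision along collar gluings, so the canonical natural transformation $\int A \Rightarrow F$ induced by the universal property of left Kan extension is an equivalence on every framed $n$-manifold obtained by finitely many handle attachments. The hardest step is the excision verification above; the reduction to the free generator (as opposed to general $\EE_n$-algebras) is what makes the bar construction tractable, since each simplicial level reduces to a product of configuration spaces rather than a more opaque relative tensor product.
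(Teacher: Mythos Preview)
The paper does not supply its own proof of this theorem; it simply records (in the remark immediately following) that the statement is a special case of Proposition~5.5 of~\cite{ayala-francis-topological} and Proposition~4.12 of~\cite{aft-2}. So there is no in-text argument to compare against, only the cited references.

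Your strategy---define the candidate functor $F(M)=\coprod_{l}\conf_l(M)$, check it is symmetric monoidal and restricts to $A$ on disks, verify $\tensor$-excision, and then invoke the characterization of excisive theories (Theorem~\ref{theorem.excisives-are-fact-hom})---is a legitimate route and is in the spirit of how these lectures organize the subject. The cited references tend instead to compute the left Kan extension colimit~\eqref{eqn:factorizationhomology-formula} more directly, using that the slice $(\disk_{n,\fr})_{/M}$ is sifted and that the free algebra already satisfies $A(U)\simeq\coprod_l\conf_l(U)$ on disjoint unions of disks; the identification then amounts to a cofinality/hypercover argument rather than to checking excision. Your approach trades that cofinality step for an excision step, which is a fair exchange.

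The one place your sketch is thin is exactly where you flag it: the excision verification. The sentence ``the space of such partitions of a fixed configuration is contractible'' hides the real content. What you need is that the augmentation from the two-sided bar construction $\mathrm{Bar}_\bullet\bigl(F(M_0),F(W\times\RR),F(M_1)\bigr)$ to $F(M)$ induces an equivalence on realizations; the cleanest way to see this is to identify the bar object, levelwise, with configurations in $M_0 \sqcup (W\times\RR)^{\sqcup k}\sqcup M_1$ together with the choice of $k$ ordered ``slicing'' hyperplanes in the $\RR$-direction of the collar, and then observe that for a fixed configuration in $M$ the simplicial space of such slicings is contractible (it is the nerve of a poset with a terminal object, or equivalently an iterated cone). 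Your ``isotope points out of the collar'' remark is intuition for this, but as written it does not yet pin down why the map is an equivalence after realization rather than merely a retraction on $\pi_0$. If you make that step precise, the argument goes through.
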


\begin{remark}
Theorem~\ref{theorem:conf-as-fact-hom} is a simple case of Proposition~5.5 of~\cite{ayala-francis-topological} and Proposition~4.12 of~\cite{aft-2}.
\end{remark}

\begin{example}
\index{Dold-Thom theorem}
\index{configuration spaces}
Another example illustrating some of the ingredients of factorization homology is Bandklayder's proof~\cite{bandklayder} of the  Dold-Thom theorem~\cite{dold-thom} for manifolds. Recall that the Dold-Thom theorem states the following: Fix a connected, reasonable topological space $X$ along with a basepoint $x_0$ and an abelian group $A$. Then there exist natural isomorphisms
	\eqnn
	\pi_k(\Sym(X;A)) \cong \widetilde{H_k}(X;A),
	\qquad
	k \geq 1,
	\eqnd
between the homotopy groups of the {\em infinite symmetric product of $X$ with labels in $A$} and the reduced homology groups of $X$ with coefficients in $A$. Informally, $\Sym(X;A)$ is a configuration space of disjoint, unordered points in $X$ labeled by elements of $A$---topologized so that any element labeled by the identity $e \in A$ disappear; so that when points collide, their labels add; and so that any point that collides with $x_0$ loses its label, or ``disappears.'' (One may reasonably think of the basepoint $x_0$ as a point at infinity where labeled points go to be forgotten.)
\end{example}

Finally, the more ``commutative'' the coefficient algebra, the less sensitive factorization homology is to the smooth diffeomorphism type of the manifold (and becomes more an invariant of its homotopy type). 

\begin{example}[Usual homology.]
\index{factorization homology!recovering ordinary homology}
See Exercise~\ref{exercise.classical-homology} for how one recovers usual homology by using factorization homology with coefficients in an abelian group (which is a commutative algebra in $\cC^{\tensor}=\chain_{\ZZ}^{\oplus}$). 
\end{example}

\begin{example}[Pirashvili's higher order Hochschild invariants]
\index{Pirashvili}
Fix a base ring $R$. Recall that any commutative $R$-algebra $A$ is an $\EE_n$ algebra in $\chain_R^{\tensor^{\LL}_R}$ for all $n$ (Example~\ref{example.commutative-is-En}). Hence one may compute factorization homology on a framed manifold of any dimension. 

Such invariants were studied under the name of {\em Higher order Hochschild homology} by Pirashvili~\cite{pirashvili}---there, rather than take a framed manifold, Pirashvili constructed invariants associated to any simplicial set\footnote{E.g., what we have called a ``combinatorially defined'' space before, see Remark~\ref{remark. simplicial sets are spaces}.}. (Pirashvili also studied the case where one may take a bimodule $M \neq A$ as an additional datum---see Section~\ref{section.labeled-factorization-homology}). The way this intersects with our story is as follows: Given any manifold $X$, equip it with a homotopy equivalence to a simplicial set $Y$ (for example by taking the singular complex of $X$, see Remark~\ref{remark:singular-complex}). Then Pirashvili's work constructs an explicit chain complex out of $Y$ and $A$ that computes factorization homology of $X$ with coefficients in $A$.

These methods can be generalized more generally to cdgas (commutative differential graded algebras) over $R$.\footnote{Note that ordinary commutative algebras are examples of cdgas concentrated in degree 0.} Indeed, let $\cdga_R$ be the $\infty$-category of cdgas as sketched in Example~\ref{example:cdga-oo-category}. Fixing a cdga $A$, and thinking of $A$ as an $\EE_n$-algebra, for any framed $n$-manifold we have the following equivalence:
	\eqnn
	\int_X A
	\simeq
	X\tensor_{\cdga_R} A 
	\qquad
	\in \chain_R.
	\eqnd
Here we have used the tensoring of the $\infty$-category of cdgas over spaces---i.e., $X \tensor_{\cdga_R} A$ is the colimit of $A$ in $\cdga_R$ indexed by the constant functor in the shape of $X$. For example, we have equivalences
	\eqnn
	\hom_{\cdga}(X \tensor_{\cdga} A, B)
	\simeq
	\hom_{\TTop}(X, \hom_{\cdga}(A,B)).
	\eqnd
See for instance Section~3.2 of~\cite{ginot-tradler-zeinalian} and Proposition~5.7 of~\cite{ayala-francis-primer}.
\end{example}

\begin{remark}
The above example hints at how, when $A$ is a commutative algebra, one may in general construct invariants of reasonable topological spaces---say, those that admit a proper embedding into $\RR^N$ for large $N$ (for example, suitably finite CW complexes). One may then take a small open neighborhood of the embedded space, which is a framed $N$-manifold, and compute its factorization homology with coefficients in $A$ considered as an $\EE_N$-algebra. All such invariants will be invariant under homotopy equivalences. 
\end{remark}

\clearpage
\section{Leftovers and elaborations}\label{section. leftovers}

\subsection{What are left Kan extensions?}\label{section: left kan extension}
\index{left Kan extension}
Let me at least say something about left Kan extensions.

Fix two functors $F: \cC \to \cE$ and $a: \cC \to \cD$. One can ask if there is a ``canonical'' way to extend $F$ to a functor emanating from $\cD$:	
	\eqnn
	\xymatrix{
	\cC \ar[r]^F \ar[d]^a & \cE \\
	\cD \ar@{-->}[ur]_{\exists ?}
	}
	\eqnd
Suppose you are given the data of a pair $(G,\eta)$ where $G$ is a functor $\cD \to \cE$, and $\eta: F \to G \circ a$ is a natural transformation. Such a pair is called a {\em left Kan extension} if it is initial with respect to all possible $(G,\eta)$; that is, given any other $(G',\eta')$, one can find a unique (up to contractible choice) natural transformation from $(G,\eta)$ to $(G',\eta')$. (See Section~4.3 of~\cite{lurie-htt} for details.)

The word ``extension'' is slightly misleading, as $G \circ a$ may not agree with $F$. However, if $a$ is a fully faithful inclusion (as in the case $\disk \to \mfld$ of interest to us), $G \circ a$ can be made naturally equivalent to $F$.

A left Kan extension exists if $\cE$ admits certain colimits. In fact, one can prove the following formula for left Kan extensions:
	\eqnn
	G(d) = \colim_{x \in \cC_{/d}} F(x).
	\eqnd
The colimit is diagrammed by the slice category of objects in $\cC$ over $d \in \cD$. In the case of factorization homology, we have
	\eqn\label{eqn:factorizationhomology-formula}
	\int_X A
	=
	\colim_{(\disk_{n})_{/X}} F.
	\eqnd
Concretely, the colimit is indexed by collections of (disjoint unions of) disks equipped with embeddings into the manifold $X$, and we evaluate the symmetric monoidal functor $F$ on these disks. In this way, factorization homology may be interpreted as ``the most efficient invariant glued out of ways to embed disks into $X$.'' (We have omitted mention of framings and orientations from the notation.)

\begin{remark}
While the idea of a left Kan extension has been cast aside here as an afterthought, we would like to remark that our ability to use left Kan extensions for $\infty$-categories depends on a robust enough theory of $\infty$-categories and colimits within an $\infty$-category, just as the classical notion depends on a good development of the language of categories and colimits. Though it seems we can guarantee that a left Kan extension induces a continuous map of morphism spaces ``for free;'' this lunch indeed is not free, and it is made possible by machinery developed by Joyal~\cite{joyal} and Lurie~\cite{lurie-htt}. 
\end{remark}

\subsection{What's up with sifted colimits?}\label{section: sifted colimits}
\index{sifted!colimit}
\index{sifted!diagram}
Recall that colimits are a way to ``glue together'' objects in a category. Likewise, colimits can be articulated in $\infty$-categories, though a given $\infty$-category may not always have the colimits you want (just as in ordinary category theory).

\begin{remark}
We have not defined the notion of colimits in an arbitrary $\infty$-category, but colimits satisfy a universal property analogous to the classical notion: Colimits are initial among objects receiving a map from a diagram. See 1.2.13 of~\cite{lurie-htt}.
\end{remark}

\begin{remark}[$\infty$-categorical colimits are homotopy colimits]
Let us also make a comment for readers familiar with the notion of homotopy colimits. The notion of a colimit in an $\infty$-category agrees with the notion of a homotopy colimit whenever the $\infty$-category arises from a setting in which homotopy colimits make sense. (For example, as the nerve of a model category.) See~4.2.4 of~\cite{lurie-htt} for details.
\end{remark}

We often classify colimits by the shape of the diagram encoding the gluing. A particular class of diagrams is given by the sifted diagrams, and a sifted colimit is a colimit glued out of a sifted diagram.\footnote{A diagram (i.e., an $\infty$-category) $\cD$ is called sifted if it is non-empty and if the diagonal inclusion $\cD \to \cD \times \cD$ is left final (i.e., {\em cofinal} in some works). Typical examples include filtered diagrams and $\Delta^{\op}$.}

It turns out that for any $X \in \mfld$---framed or not---the colimit in~\eqref{eqn:factorizationhomology-formula} is a sifted colimit, so the left Kan extension is guaranteed to exist if $\cC$ has sifted colimits.

Moreover, one can also extend $\int A$ to be a {\em symmetric monoidal} functor if $\tensor$ commutes with sifted colimits in each variable.

Finally, let us say that the tensor product
	\eqnn
	M \tensor_R N
	\eqnd
of two modules over an algebra can be presented as a bar construction, and the bar construction is indexed by a simplicial diagram, which in particular is a sifted diagram. That is to say, the bar construction (the tensor product) always exists if $\cC$ admits sifted colimits. 

This explains why we assume that $\cC$ admits sifted colimits and why we assume that its symmetric monoidal structure preserves sifted colimits in each variable.

\begin{example}
Let $\cC^\tensor = \chain_{\kk}^{\oplus}$. Note that the symmetric monoidal structure here is the direct sum of cochain complexes; i.e., the coproduct. $\oplus$ does preserve sifted colimits in each variable, but it does not preserve all colimits in each variable. (For example, it doesn't preserve itself in each variable!) See also Exercise~\ref{exercise.classical-homology}.
\end{example}

\subsection{How many excisive theories are there?}
Let $\cC^{\tensor}$ be a symmetric monoidal $\infty$-category admitting sifted colimits, and for which $\tensor$ preserves sifted colimits in each variable. Fix an $\EE_n$-algebra $A$.

We have seen that factorization homology results in a functor $\int A: \mfld_{n,\fr}^{\coprod} \to \cC^{\tensor}$ which is symmetric monoidal and $\tensor$-excisive (Remark~\ref{remark. factorization homology is symmetric monoidal} and Theorem~\ref{theorem: excision}). One could ask the following question: Are there other functors other than factorization homology that could satisfy these properties?

The answer is no. More precisely, we have the following:

\begin{theorem}[Theorem~3.24 of~\cite{ayala-francis-topological} and Theorem~2.45 of~\cite{aft-2}]\label{theorem.excisives-are-fact-hom}
Restriction to the full subcategory $\disk_{n,\fr} \subset \mfld_{n,\fr}$ of disks defines an equivalence of $\infty$-categories 
	\eqnn
	\{\text{
	$\tensor$-excisive, symmetric monoidal $F: \mfld_{n,\fr}^{\coprod} \to \cC^{\tensor}$
	}\}
	\to
	\text{$\disk_{n,\fr}$-algebras in $\cC^{\tensor}$}
	\eqnd
with an inverse functor implemented by factorization homology. 
\index{$\disk_{n,\fr}$-algebra}
\index{excision!$\tensor$-excision}
\end{theorem}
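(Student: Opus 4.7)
The approach is to verify that the two functors implementing the equivalence are mutually inverse. Restriction $\rho \colon F \mapsto F|_{\disk_{n,\fr}^{\coprod}}$ and factorization homology $\int$ both land in the appropriate categories: $\int A$ is symmetric monoidal by Remark~\ref{remark. factorization homology is symmetric monoidal} and satisfies $\tensor$-excision by Theorem~\ref{theorem: excision}, while $\rho(F)$ is automatically a $\disk_{n,\fr}$-algebra. The universal property of the left Kan extension presents $\int$ as left adjoint to $\rho$. Because $\disk_{n,\fr} \hookrightarrow \mfld_{n,\fr}$ is fully faithful, the unit $A \to \rho(\int A)$ is an equivalence (left Kan extension along a fully faithful inclusion restricts back to the original functor), so one direction of the equivalence is essentially free.

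The essential content is to show that the counit $\eta_F \colon \int(F|_{\disk}) \to F$ is an equivalence at every object, for every $\tensor$-excisive symmetric monoidal $F$. I would argue this by induction on handle decompositions. Let $\mathcal{S} \subset \mfld_{n,\fr}$ be the class of framed manifolds $X$ on which $\eta_F(X)$ is an equivalence. Then $\mathcal{S}$ contains all finite disjoint unions of framed disks (using symmetric monoidality of both functors together with the fact that the unit is an equivalence). Moreover $\mathcal{S}$ is closed under collar-gluings $X = X_0 \cup_{W \times \RR} X_1$ provided $X_0, X_1$, and $W \times \RR$ all lie in $\mathcal{S}$: both $\int(F|_{\disk})$ and $F$ satisfy $\tensor$-excision, so we may compute both sides of $\eta_F(X)$ as the relative tensor product
\eqnn
\int_{X_0}(F|_{\disk}) \bigotimes_{\int_{W \times \RR}(F|_{\disk})} \int_{X_1}(F|_{\disk}) \;\longrightarrow\; F(X_0) \bigotimes_{F(W \times \RR)} F(X_1),
\eqnd
which is an equivalence since each factor map is. By Convention~\ref{convention:manifolds-small}, every $X$ arises from $\emptyset$ by a finite sequence of handle attachments, and each such attachment is a collar-gluing.

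The principal obstacle is that the attaching region of a $k$-handle is $S^{k-1} \times D^{n-k} \times \RR$, which is not itself a disjoint union of disks, so the induction is not immediate. This forces a nested induction: such attaching regions can themselves be built from disks by further collar-gluings (writing $S^{k-1} = D_+^{k-1} \cup_{S^{k-2} \times \RR} D_-^{k-1}$ and iterating), so the argument proceeds by lexicographic induction on the pair (number of remaining handles, handle dimension). Throughout, one must track framings carefully: every inherited piece must be handled as a collar-gluing in $\mfld_{n,\fr}$, not merely in $\mfld_n$. This is where the fiber-product description from Definition~\ref{defn:mfld_fr} is essential, since it lets us reduce framing compatibility to a manipulation of homotopy-commutative triangles in $\TTop_{/BGL_n(\RR)}$. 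A further technical point is that the hypothesis ``$\tensor$ preserves sifted colimits in each variable'' is needed so that the excision equivalence, which is internally a bar construction computed as a sifted colimit, is preserved under the colimit presentation \eqref{eqn:factorizationhomology-formula} of factorization homology; without it, the comparison of tensor products in the displayed equation above could fail to be an equivalence.
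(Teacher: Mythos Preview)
Your proposal is correct and follows essentially the same approach the paper sketches: set up the restriction/left-Kan-extension adjunction, note the unit is an equivalence by full faithfulness of $\disk_{n,\fr} \hookrightarrow \mfld_{n,\fr}$, and prove the counit is an equivalence by induction on handle decompositions using $\tensor$-excision on both sides. The paper says little more than ``one uses the assumption of $\tensor$-excision to show by induction that the values of an excisive theory on handles are completely determined by their values on disks, then one uses handle decompositions,'' so you have in fact supplied more structure than the paper does---in particular your observation that the collar region $S^{k-1} \times D^{n-k} \times \RR$ is not itself a disk, forcing a nested induction, is exactly the substance of the argument and is only implicit in the paper's sketch.
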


This means that any symmetric monoidal, $\tensor$-excisive invariant arises as factorization homology of some $\EE_n$-algebra. The proof itself is not too difficult once the machinery is set up: One uses the assumption of $\tensor$-excision to show by induction that the values of an excisive theory on handles are completely determined by their values on disks, then one uses handle decompositions to show the same for arbitrary smooth manifolds. The proofs of the theorems in~\cite{ayala-francis-topological} and~\cite{aft-2} cited above are slightly more complicated because~\cite{ayala-francis-topological} addresses the case of topological manifolds (where handlebody decompositions are not guaranteed in dimension 4) and because~\cite{aft-2} addresses a stratified generalization for which one must rely on (analogous, but involved to set up and verify) decomposition results for stratified spaces as developed in~\cite{aft-1}. 

\begin{remark}
Theorem~\ref{theorem.excisives-are-fact-hom} holds true also in the generality of arbitrary $G$-structures (Remark~\ref{remark:disk-G-algebras}): Any symmetric monoidal, $\tensor$-excisive functor for $G$-structured manifolds arises as factorization homology of a $\disk_{n,G}$-algebra. 
See again Theorem~3.24 of~\cite{ayala-francis-topological} and Theorem~2.45 of~\cite{aft-2}.
\end{remark}

\begin{remark}\label{remark.open-exhaustion-colimits}
If one wants to consider manifolds that are larger than our Convention~\ref{convention:manifolds-small}, the above classification theorem must be modified slightly. Every symmetric monoidal, $\tensor$-excisive functor that {\em preserves sequential colimits of countable open exhaustions} arises as factorization homology of some algebra. (See for example Definition~2.37 of~\cite{aft-2}.)
\end{remark}

\subsection{Locally constant factorization algebras}
\index{factorization algebras}
The origins of factorization homology are rooted in the work on chiral algebras of Beilinson and Drinfeld~\cite{beilinson-drinfeld}. Factorization algebras are another perspective on how to take local algebraic structures and form global invariants; see the work of Costello-Gwilliam~\cite{costello-gwilliam}. 

I am often asked about the equivalence between locally constant factorization algebras on $\RR^n$, and $\EE_n$-algebras. I refer the reader to Section~2.4 of~\cite{aft-2} for one formulation, where we exhibit the $\infty$-category of disk embeddings as a localization of a discrete version. 

Another equivalence proven using the formulation of cosheaves on the Ran space of $\RR^n$ can be found in~\cite{higher-algebra}. This approach works for not-necessarily-unital $\EE_n$-algebras.  

\subsection{How good a manifold invariant is factorization homology?}
This is also a natural question. Roughly speaking, it seems to be ``about as good as the homotopy type of configuration spaces.'' 
\index{configuration spaces}
It is unknown how good a manifold invariant the homotopy type of configuration spaces are; for some time they were conjectured to be only sensitive to the homotopy type of a manifold, but now they are known to distinguish homotopy-equivalent (but non-diffeomorphic) manifolds of the same dimension~\cite{longoni-salvatore}.

To see why configuration spaces enter the picture, recall that the free $E_n$-algebra (in topological spaces) generated by a single element is the configuration space of unordered points in $\RR^n$ (Example~\ref{example:free-conf}). By the free-forget adjunction, any $E_n$-algebra may be resolved by algebra maps from this configuration space, and thus factorization homology with coefficients in an arbitrary $E_n$-algebra receives maps from (a diagram made up of) the factorization homology with coefficients in the free algebra. On the other hand, factorization homology of a manifold with coefficients in the free algebra is homotopy equivalent to the configuration space of the manifold (Example~\ref{theorem:conf-as-fact-hom}).

\subsection{Are we stuck with algebras only?}\label{section.labeled-factorization-homology}
There is a variant of factorization homology where one can obtain invariants of not only algebras, but algebras equipped with the data of bimodules, and also of higher categories. This is developed in~\cite{aft-1,aft-2} (in the case of algebras and bimodules) and in~\cite{ayala-francis-rozenblyum} (for higher categories). For example, by articulating what we mean by a category of 1-manifolds equipped with marked points, factorization homology over a circle with marked points recovers Hochschild homology of an algebra (assigned to open 1-dimensional disks) with coefficients in the desired bimodules (assigned to marked points).

\clearpage
\section{Exercises}

\begin{exercise}\label{exercise. framing in 1 dim is orientation}
When $n=1$, show that a framing on a 1-manifold is the same thing as an orientation; moreover, show that the space of framed embeddings (i.e., embeddings equipped with framing compatibility) is equivalent to the space of orientation-preserving embeddings. You will need to consult Section~\ref{section.framings} for this.
\end{exercise}

\begin{exercise}
Exhibit an equivalence of $\infty$-categories between $\mfld_{1,\fr}$ and $\mfld_{1,\orr}$.
\end{exercise}

\begin{exercise}\label{exercise.E2-in-vect-and-set-is-commutative}
Let $\cC^{\otimes}$ be $\Vect_{\kk}^{\tensor_{\kk}}$. Show that for $n\geq 2$, an $\EE_n$-algebra in $\Vect_{\kk}^{\tensor_{\kk}}$  is a commutative algebra over $\kk$. Likewise, show that an $\EE_n$-algebra in the category of sets $\Set^\times$ is a commutative monoid.
\end{exercise}

\begin{exercise}\label{exercise. associative associative monoid is commutative}
Show that an associative monoid in the category of associative monoids is simply a commutative monoid. (We are testing Dunn additivity for $n=2$ when $\cC^{\tensor} = \Set^\times$, the category of sets under direct product.)

Likewise, show that an associative algebra in the category of associative $\kk$-algebras is a commutative $\kk$-algebra.
Compare with Exercise~\ref{exercise.E2-in-vect-and-set-is-commutative}.
\end{exercise}

\begin{exercise}
Take $\cC^\otimes=\Cat^\times$. Show that an $\EE_2$-algebra in $\Cat$ is a braided monoidal category~\cite{joyal-street}. (You can see where the word ``braided'' comes from in this picture: What does a movie of moving 2-dimensional disks around each other look like?)

Show that a $\disk_{2,\orr}$-algebra is a balanced monoidal category. (In particular, ribbon categories are examples of $\disk_{2,\orr}$-algebras.) 

What about $\EE_n$-algebras in $\Cat$ for $n\geq 3$?
\end{exercise}

\begin{exercise}
Spell out $\EE_n$-algebra structures for Examples~\ref{example:loop-spaces} and~\ref{example:free-conf}.
\end{exercise}

\begin{exercise}
Let $\cC^{\tensor} = \Vect_{\kk}^{\tensor_\kk}$ be the ordinary category of vector spaces and fix a unital associative $\kk$-algebra $A$. Using the general excision theorem, Theorem~\ref{theorem: excision} of this chapter, recover the excision theorem for the circle from the previous chapter (Theorem~\ref{theorem: circle excision}).

Be very careful about orientations of circles; the point is to understand why the tensor product contains an $A^{\op}$ factor.
\end{exercise}

\begin{exercise}
How many framings does $S^1 \times \RR$ admit? 

(This may be difficult.) Which of these allow for $S^1 \times \RR$ to be an $\EE_1$-algebra in $\mfld_{2,\fr}^{\coprod}$?
\end{exercise}

\begin{exercise}
How many framings does $S^1 \times S^1$ admit? 
\end{exercise}

\begin{exercise}
(Open problem.) Can you classify the $\EE_n$-algebras in $\mfld_{n,\fr}^{\coprod}$?
\end{exercise}

\begin{exercise}
\index{excision!for usual homology}
Let $\cC^{\tensor} = \chain^{\oplus}$ be the $\infty$-category of cochain complexes, but with direct sum as the symmetric monoidal structure. Show that any cochain complex $V$ admits an $\EE_n$-algebra structure by defining $V \oplus V \to V$ to be the addition map. Show further that this structure is unique.

Note that I have left out the subscript $\kk$; this exercise is valid for cochain complexes over an arbitrary base ring, including $\ZZ$.
\end{exercise}

\begin{exercise}\label{exercise.classical-homology}
\index{factorization homology!recovering ordinary homology}
Let $\cC^{\tensor}=\Chain^\oplus$ and fix $A$ any abelian group (considered as a cochain complex in degree 0). By the previous exercise, addition endows $A$ with an $\EE_n$-algebra structure for any $n$.

First, show that $\oplus$ preserves sifted colimits in each variable.

Then, show that for any framed manifold $X$, there is a quasi-isomorphism of cochain complexes
\eqnn
	\int_X A \simeq C_*(X;A)
\eqnd
where $C_*(X;A)$ is the cochain complex of singular chains. (Hint: Show that
$\otimes$-excision for $\oplus$ gives rise to the Mayer-Vietoris sequence; this is a consequence of the usual excision theorem for singular homology.) 

This exercise shows that factorization homology generalizes
ordinary homology, hence the word ``homology.'' Your proof also shows why the $\tensor$-excision theorem has the word ``excision'' in it.

Can you recover the homology of any compact manifold using factorization homology? How about for any finite CW complex? 
\end{exercise}

\chapter{Topological field theories and the cobordism hypothesis} 

In this chapter we introduce the notion of a fully extended topological field theory. We will sketch a definition of fully dualizable object in a symmetric monoidal $(\infty,n)$-category, and sketch also why the framed point is a fully dualizable object in the $(\infty,n)$-category of framed cobordisms, stating also the Baez-Dolan cobordism hypothesis. We will then end with a statement of Scheimbauer's result that factorization homology defines a fully extended topological field theory.

\clearpage
\section{Review of last lecture}
\begin{definition}
Let $\cC^\otimes$ be a symmetric monoidal $\infty$-category, 
e.g. $\Vect^\otimes$, $\Chain^\otimes$ or $\Cat^\times$. An $\EE_n$-algebra in $\cC$
is a symmetric monoidal functor $\disk_{n,\fr}^{\coprod} \to \cC^\otimes$. 
\end{definition}
We give some examples from the previous lectures in the following table:
\begin{center}
\begin{tabular}{|c|c|c|c|}
\hline 
 & $\Vect^\otimes$ & $\Cat^\times$ & $\Chain^\otimes$  \\ 
\hline 
$\EE_1$ & Unital associative $\K$-algebras & Monoidal categories & $A_\infty$-algebras   \\ 
\hline 
$\EE_2$ & Unital commutative $\K$-algebras & Braided monoidal categories &  $\EE_2$-algebras  \\ 
\hline 
$\EE_3$ & $``$ & Symmetric monoidal categories &  $\EE_3$-algebras \\ 
\hline 
 $\dots$ & $``$ & $``$ & $\dots$ \\ 
\hline 
\end{tabular} 
\end{center}
\index{$\EE_n$-algebra!examples}

What we see is that the $\Vect_{\kk}$ could not see the difference between $\EE_n$-algebras for $n \geq 2$. This is because the morphism spaces of $\Vect_{\kk}$ are  discrete, hence there is ``no room'' for the interesting homotopies to show up. Likewise, $\Cat^{\times}$ sees the difference between $\EE_1$-algebras and $\EE_2$-algebras, but because its only ``higher'' morphisms are given by natural isomorphisms, there is no room to see the higher-dimensional homotopies that we need to detect $\EE_3$-structures. That is, any $\EE_n$-algebra in categories for $n \geq 3$ is a symmetric monoidal category.

Finally, it turns out that we do not have other names for an $\EE_n$-algebra in cochain complexes other than ``$\EE_n$-algebra.''

We also stated:

\begin{definition}
\index{factorization homology}
Let $\cC^\otimes$  be a symmetric monoidal $\infty$-category which has all sifted
colimits and let $A$ be an $\EE_n$-algebra in $\cC^\otimes$. 
Factorization homology is the left Kan extension
\begin{equation}\nonumber
\begin{tikzcd}
\disk_{n,\fr} \ar[r,"A"] \ar[d, hookrightarrow]& \cC \\ 
\mfld_{n,\fr} \ar[ru, dotted, "\int A" , swap]& 
\end{tikzcd} \ \ .
\end{equation}
We write $\int_X A$ for factorization homology evaluated on a framed manifold $X$. 
\end{definition}

\begin{remark}
Let us collect some remarks.

\begin{itemize}
    \item Factorization homology can also be defined for manifolds whose tangent bundles are equipped with a $G$-reduction; the necessary algebraic input there is---informally---an $\EE_n$-algebra with $G$-action.

	\item Factorization homology is also functorial in the $A$ variable. That is, given a map of $\EE_n$-algebras $A \to B$, we also have induced maps $\int_X A \to \int_X B$ for any framed manifold $X$. In this way, you can think of factorization homology $\int_X A$ as not only an invariant of the framed manifold $X$, but also 
an invariant of the $\EE_n$-algebra $A$. For example, if we take $\cC^{\tensor} = \chain_k^{\tensor}$, and if we know that $\int_X A$ is not quasi-isomorphic to $\int_X B$, then we can conclude that $A$ and $B$ are not equivalent $\EE_n$-algebras. 
\end{itemize}
\end{remark}

The tool which makes factorization homology computable is tensor excision:

\begin{theorem}[$\otimes$-excision]
\index{excision!$\tensor$-excision}
Assume that $\cC^\otimes $ is a symmetric monoidal $\infty$-category with all
sifted colimits, and assume that $\otimes$ preserves sifted colimits in each variable. Fix a framed manifold $X$ and a decomposition $X=X_0 \bigcup_{W\times \R} X_1$.

Then $\int_X A$ can be computed as the relative tensor product 
	\eqnn
	\int_X A = \int_{X_0} A \bigotimes_{\int_{W\times \R}A} \int_{X_1} A \ \ . 
	\eqnd
\end{theorem}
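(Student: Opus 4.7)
The plan is to prove $\otimes$-excision by exhibiting both sides as colimits of explicit simplicial diagrams in $\cC^\tensor$ and showing these colimits agree. First I would write $\int_X A$ using the colimit formula for the left Kan extension stated in Section~\ref{section: left kan extension}:
\eqnn
\int_X A \;\simeq\; \colim_{(U \hookrightarrow X)\, \in\, (\disk_{n,\fr})_{/X}} A(U).
\eqnd
On the right-hand side, abbreviate $B = \int_{W \times \RR} A$ (an $\EE_1$-algebra), $M = \int_{X_0} A$ (a right $B$-module), and $N = \int_{X_1} A$ (a left $B$-module), and present the relative tensor product as the geometric realization of the two-sided bar construction:
\eqnn
M \otimes_B N \;\simeq\; \bigl|\,\mathrm{Bar}_\bullet(M,B,N)\,\bigr|,
\qquad
\mathrm{Bar}_k(M,B,N) \;=\; M \otimes B^{\otimes k} \otimes N.
\eqnd
This is a sifted colimit, so it exists and commutes appropriately with the operations in $\cC^\tensor$ by hypothesis.

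Second, I would use Remark~\ref{remark. factorization homology is symmetric monoidal}: since $\int A$ is symmetric monoidal, each simplicial level can be identified with factorization homology of a disjoint union,
\eqnn
\mathrm{Bar}_k(M,B,N) \;\simeq\; \int_{X_0 \,\sqcup\, (W \times \RR)^{\sqcup k} \,\sqcup\, X_1} A,
\eqnd
with face maps induced by the $\EE_1$-multiplication embeddings $(W \times \RR)^{\sqcup 2} \to W \times \RR$ and the module action embeddings $X_i \sqcup (W \times \RR) \to X_i$ constructed in the proof sketch of the proposition preceding the theorem. Degeneracies come from inserting empty collars via $\emptyset \hookrightarrow W \times \RR$. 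Hence the right-hand side of the excision equation is computed as the colimit of a specific simplicial diagram valued in the slice $(\mfld_{n,\fr})_{/X}$, composed with $\int A$.

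Third, the heart of the proof is to show this simplicial colimit agrees with the colimit defining $\int_X A$. I would construct a functor from the simplicial indexing category into $(\disk_{n,\fr})_{/X}$ (obtained by choosing disk refinements of the pieces $X_0$, $X_1$, $(W\times\RR)^{\sqcup k}$ and their embeddings into $X$ via the collar gluing) and prove it is \emph{left cofinal} in the sense of Joyal--Lurie. Cofinality reduces to checking that for each finite disk embedding $U \hookrightarrow X$, the overcategory of factorizations through the bar-configurations is contractible. The key geometric input is an isotopy-extension argument: a finite collection of disks in $X$ can always be isotoped, through a contractible space of choices, into a position in which each component lies entirely in $X_0$, entirely in $X_1$, or entirely inside the collar $W \times \RR$ (and is there of the product form $D \times \RR$ after a further contractible choice). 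Using the framing compatibilities from Section~\ref{section.framings}, these isotopies lift to the framed setting.

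The main obstacle will be this cofinality/contractibility step: one must carefully organize the combinatorics so that the simplicial structure on the bar construction matches exactly with the data of ``how a configuration of disks crosses the collar,'' including all coherences of composition of module actions. This requires the sifted-colimit hypothesis on $\cC^\tensor$ both to identify each bar term with factorization homology of a disjoint union (Remark~\ref{remark. factorization homology is symmetric monoidal}) and to interchange the bar colimit with the disk-diagram colimit. Once cofinality is established, both sides are identified with the same colimit in $\cC^\tensor$ and the equivalence follows, as carried out in detail in~\cite{ayala-francis-topological, aft-2}.
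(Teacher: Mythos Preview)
The paper does not actually prove this theorem: both at its first appearance (Theorem~\ref{theorem: excision}) and in its restatement here, $\otimes$-excision is stated with citations to~\cite{ayala-francis-topological,aft-2} and no proof is given. The only related argument in the paper is the sketch of the preceding Proposition, which explains why $\int_{W\times\RR}A$ is an $\EE_1$-algebra and why the $\int_{X_i}A$ are modules over it; the excision statement itself is deferred entirely to the references.

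Your outline is a faithful summary of the strategy actually carried out in those references: one rewrites the right-hand side as the realization of a two-sided bar construction, identifies each bar level with factorization homology of a disjoint union using symmetric monoidality, and then proves a cofinality statement comparing the bar-shaped diagram to the full diagram $(\disk_{n,\fr})_{/X}$. One point to be careful about: the cofinality is not literally between $\Delta^{\op}$ and $(\disk_{n,\fr})_{/X}$ as you describe it, since the bar terms $X_0 \sqcup (W\times\RR)^{\sqcup k}\sqcup X_1$ are not themselves disjoint unions of disks. In the references the comparison is organized differently---one first replaces each piece by its own slice category of disks and then uses that the resulting diagram over $\Delta^{\op}$ has sifted colimit equal to the full slice colimit (this is where Lemma~3.11 of~\cite{ayala-francis-topological}, or the analogous results in~\cite{aft-2}, enter). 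Your ``choose disk refinements'' step is gesturing at this, but as written it conflates two distinct cofinality arguments. Otherwise the shape of your argument matches the published proofs.
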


\begin{remark}
What $\tensor$-excision really demonstrates is that factorization homology is computable if you can determine the $E_1$-algebra associated to $W \times \RR$. The easiest class of algebras with which one can do this are, informally, those algebras constructed in a way expressible using Dunn additivity (Example~\ref{example.dunn-additivity}). In contrast, formality of the $E_n$-operad in characteristic zero~\cite{tamarkin-2003-deligne-conjecture-published, fresse-willwacher} tells us that $E_n$-algebras can be constructed out of commutative cdgas with a degree $(1-n)$ bracket. This description leaves opaque any compatibility with Dunn additivity, and as such, it is not at all trivial to apply $\tensor$-excision to such examples.
\end{remark}

\clearpage
\section{Cobordisms and higher categories}
\index{cobordism}
\index{cobordism!oriented}
We switch topics a bit to work toward the notion of a topological field theory.

$\otimes$-excision shows that factorization homology is a `local-to-global' 
invariant. Thus the invariant can be computed from a decomposition of $X$, and importantly, the invariant is 
insensitive to the way in which we decompose $X$. 
Our next goal is to capture this geometric property more categorically. 

\begin{warning-numbered}
A lot what we say in this section regarding the (higher) category of cobordisms is informal. For a slightly more rigorous treatment, see Section~\ref{section. cobordism category} below.
\end{warning-numbered}

\begin{definition}[Informal]
\index{$\Cob_{n,n-1}$}
Let
	\eqnn
	\Cob_{n,n-1}
	\eqnd
be the category whose objects are compact $n-1$ dimensional, oriented manifolds. 
Given $W_0$ and $W_1$ objects of $\Cob_{n,n-1}$,
an element of the set $\hom(W_0,W_1)$ is a compact $n$-dimensional 
manifold $X$ together with an oriented
identification 
	\eqnn
	\partial X \cong W_0 \coprod W_1^{\op}
	\eqnd
where $W_1^{\op}$ denotes $W_1$ equipped with the opposite orientation. For technical reasons, we will demand that this identification extends to define a collar of $\partial X$, 
	\eqnn
	\partial X \times [0,1)
	\cong (W_0 \coprod W_1^{\op}) \times [0,1)
	\eqnd
where we also identify $\partial X \times [0,1)$ with a small neighborhood of $\del X \subset X$.

Composition is given by gluing together manifolds along their boundary:

    \small
    \begin{center}
    \begin{overpic}[scale=0.28]{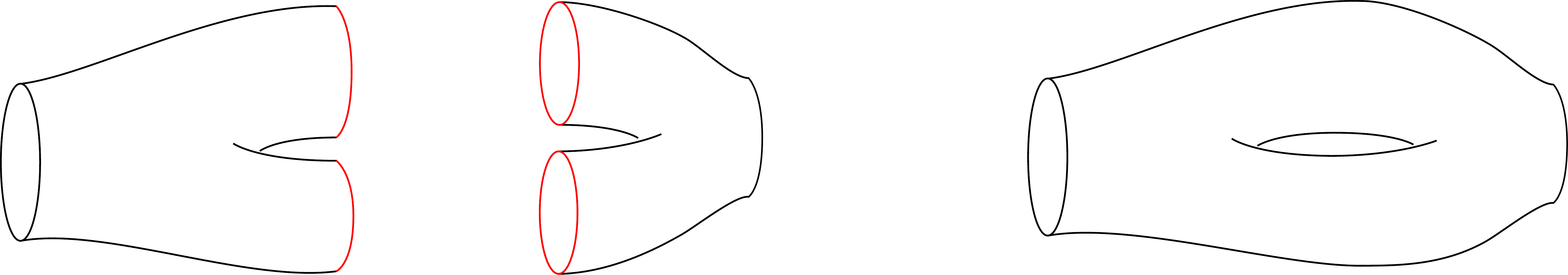}
    \put(27,7){$\circ$} 
    \put(55,7){\huge$=$}
    \put(-3,0){$W_2$}
    \put(23,0){$W_1$}
    \put(31,0){$W_1$}
    \put(48,0){$W_0$}
    \put(63,0){$W_2$}
    \put(100,0){$W_0$}
    %\put(25,18){$\longleftarrow$}
    %\put(82,18){$\longleftarrow$}
    \put(82,11){$ X' \circ X$}
    \put(43,8){$  X$}
    \put(10,8){$ X' $}
    \end{overpic}
    \end{center}
    \normalsize

Finally, we note that the identity morphism of any $W$ is given by the cylinder $W \times [0,1]$.
\end{definition} 

\begin{remark}
Note that in the figure, we have read the cobordism ``from right to left.'' This is to make the usual notation for composition more easily compatible with our pictures.
\end{remark}

\begin{remark}
Usually categories are named after their objects. The category $\Cob_{n,n-1}$ is an exception, named after its morphisms. 
\end{remark}

\begin{definition}
The pair 
	\eqnn
	(X,\partial X \cong W_0 \coprod W_1^{op})
	\eqnd
is called a {\em cobordism} from $W_0$ to $W_1$.
\end{definition}

The definition of $\Cob_{n,n-1}$ achieves our goal of articulating local-to-global invariants in the following way: Every functor 
	\eqn\label{eqn. Z n dim not extended}
	Z: \Cob_{n,n-1} \to \cC
	\eqnd
satisfies a decomposition property. 

To see this, fix an element 
	\eqnn
	X\in \hom_{\Cob_{n,n-1}}(\emptyset,\emptyset),
	\eqnd
i.e., a smooth oriented compact manifold $X$ without boundary. The evaluation of 
$Z$ on $X$ is an element 
	\eqnn
	Z(X)\in \hom_\cC(Z(\emptyset),Z(\emptyset)).
	\eqnd
Let $X=X_0\coprod_{W}X_1$ be a decomposition of $X$ into two cobordisms
$X_0\colon \emptyset \longrightarrow W$ and $X_1\colon W \longrightarrow \emptyset$.
The functoriality of $Z$ implies $Z(X)=Z(X_1)\circ Z(X_0)$, i.e. we can compute 
$Z(X)$ from the decomposition of $X$. Moreover, every decomposition of $X$ gives the same result, namely $Z(X)$.

\begin{definition}
\index{TFT}
Let $\cC$ be a symmetric monoidal $\infty$-category.
An $n$-dimensional (oriented) topological field theory, or {\em TFT}, is a symmetric monoidal functor
	\eqnn
	Z: \Cob_{n,n-1}^{\coprod} \to \cC^\tensor.
	\eqnd
\end{definition}

\begin{remark}
Classically, for instance in ideas of Atiyah and Segal, one would take $\cC^{\tensor}$ to be the category of vector spaces with symmetric monoidal structure given by $\tensor_{\kk}$.
\end{remark}

This is tantalizing. So we must naturally be led to ask:
Can we classify such functors?

The answer for high values of $n$ is a resounding no. For example, to classify such functors, it may help to have a good handle on the objects of $\Cob_{n,n-1}$; but classifying all closed $(n-1)$-dimensional manifolds is near impossible. We cannot do it when $(n-1)$=4. 

\begin{remark}
Of course, one need not classify all objects to define a functor. And the difficulty of classification of manifolds is one reason one would {\em seek} a functor as above. The real reason I am moaning on about this is to get to the fully extended cobordism category below.

Let me emphasize that the difficulty of classifying manifolds is not a convincing reason for one to abandon the search for functors out of $\cob_{n-1,n}$ for values such as $n=2,3,$ or $4$. For toy examples in representation theory, the reader can look up Dijkgraaf-Witten theory.  Seiberg-Witten invariants or Chern-Simons theory come also ``close'' to defining functors out of $\Cob_{n,n-1}$ in our sense, though I won't go into it here.
\end{remark}

But what if we can further decompose $n-1$ dimensional manifolds? For example, what if instead of classifying $(n-1)$-dimensional manifolds, we allow our ``category'' to decompose these further into $(n-2)$-dimensional manifolds, and so forth and so forth? Then to understand our objects, we would need only understand 0-dimensional manifolds. This we can do.

I put ``category'' in quotes above because it turns out the natural algebraic structure to look for is that of an  {\em $n$-category}, of which a category is the case of $n=1$. Let me ease us into this notion by discussing the example of $n=2$.

\begin{definition}[Informal]
We denote by $\Cob_{0,1,2}$, or 
	\eqnn
	\Cob_2
	\eqnd
for short, the ``category'' whose data are given by
\begin{itemize}
\item Objects: Oriented 0-dimensional compact manifolds. (Which is to say, an object is a possibly empty collection of points, each point equipped with a plus or a minus.)
\item Given two objects $W_0, W_1$, we declare
	\eqnn
	\hom(W_0,W_1)= \{ \text{cobordisms }X\colon W_0\longrightarrow W_1\}
	\eqnd
to be the collection of oriented cobordisms from $W_0$ to $W_1$. Then,
\item Given any two cobordisms $X$ and $Y$ having the same source and target, we declare an element of
	\eqnn
	\hom(X,Y)
	\eqnd
to be a compact 2-dimensional oriented manifold $Q$ with {\em corners}, equipped with an identification
	\eqnn
	\partial Q \cong \left(X\coprod Y^{\op}\right) \bigcup \left(W_0\times [0,1] \coprod W_1^{\op} \times [0,1]  \right).
	\eqnd
Informally, $Q$ is a cobordism between the cobordisms $X$ and $Y$. The $\bigcup$ above is a gluing along the subspace
	$
	\left( W_0 \times \{0,1\}\right)
	\coprod 
	\left( W_1 \times \{0,1\}\right).
	$
\end{itemize} 
\end{definition}

\begin{example}
There's lots to unpack here. Let's first begin with an example of a $Q$, the saddle:  
	\begin{center}  
    \begin{overpic}[scale=0.6]{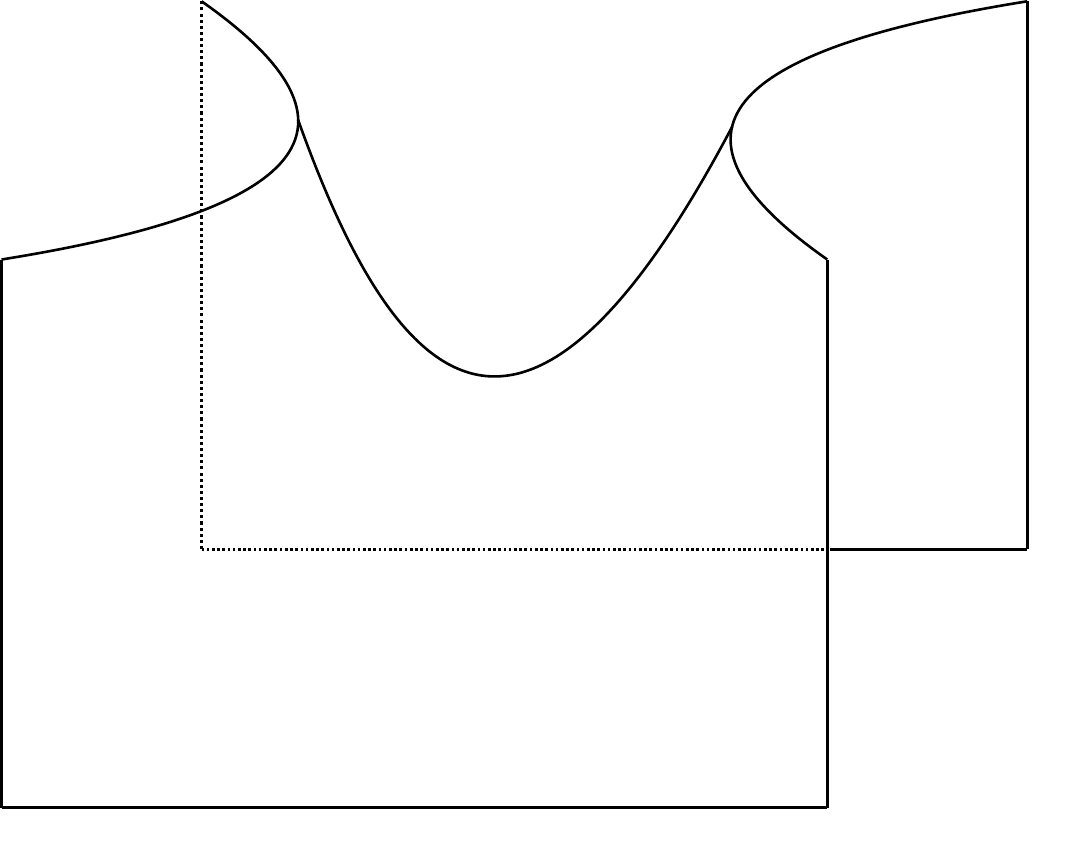}
    \end{overpic}
    \end{center}
In this example, I have taken 
	\eqnn
	W_0 = W_1 = \ast^+ \coprod \ast^-
	\eqnd
to equal two disjoint points; one with positive orientation and one with negative orientation. $X$ is a morphism given by a horseshoe and a co-horseshoe---it is a disconnected, oriented 1-manifold with boundary given by $W_0 \coprod W_1^{\op}$. Finally, I have taken $Y$ to be the product $W_0 \times [0,1]$, the identity morphism of $W_0$.
\end{example}

\begin{remark}[Cobordisms, classically]
Cobordisms have long been a way to prescribe ways to change the differential topology of a smooth manifold. You may have heard smooth topologists talking about attaching handles to change a manifold, and the above cobordism is an example of attaching a 1-handle to change a 2-manifold. Indeed, the (co)-horseshoes are examples of attaching 0- and 1-handles to change 1-manifolds. 
\end{remark}

\begin{remark}[Composing the 2-manifolds]\label{remark. composing 2 manifolds}
We knew from before that we could composes the 1-manifolds, which are cobordisms in the previous sense---we glue $X$'s along a common $W$. We now note that given two $Q$'s (i.e., two 2-dimensional manifolds with corners) we can glue them to each other in possibly two distinct senses: 
	\enum
	\item If the target of $Q$ is a cobordism $Y$, and if the domain of $Q'$ is that same cobordism, we can glue $Q$ and $Q'$ along $Y$. (We can compose along 1-manifolds.)
	\item If $X$ and $Y$ are morphisms from $W_0$ to $W_1$, and if $X',Y'$ are morphisms from $W_1$ to $W_2$, we can glue $Q$ and $Q'$ to each other along $W_1 \times [0,1]$. (We can compose along ``constant'' 0-manifolds.)
	\enumd
\end{remark}
	
All this data, satisfying conditions that are natural but cumbersome to articulate, is some form of a {\em 2-category}. The $W$s are called objects, the $X$'s and $Y$'s are morphisms, or 1-morphisms, and each $Q$ is  a {\em 2-morphism}. 

\begin{remark}[The ``$n$'' in $n$-morphism.]
We referred to $Q$ as a 2-morphism. This ``2'' can stand for many things in your mind: The dimension of the manifold, the ``height'' above the notion of being an object (an object is a 0-morphism and a usual morphism is a 1-morphism, for example), or the number of distinct senses in which you can compose $Q$'s (as indicated in Remark~\ref{remark. composing 2 manifolds}).
\end{remark}

Now that we have somewhat given intuition for $n=2$, let us state:

\begin{definition}[Informal]
\index{$\cob_n$}
We let $\cob_n$ denote the $n$-category whose objects are oriented 0-manifolds, whose morphisms are oriented 1-manifolds with boundary (expressed as cobordisms between 0-dimensional manifolds), and whose $k$-morphisms are $k$-manifolds with corners (expressed as cobordisms between $(k-1)$-dimensional manifolds).
\end{definition}

If we could formalize the notion of $n$-category (I have only sketched an idea here), then it's clear that the following algebraic gadget can encode ``local-to-global'' invariants: Functors
	\eqnn
	\cob_n \to \cC
	\eqnd
of $n$-categories. Whatever we mean by functor, all compositions should be respected; this is the sense in which a functor from $\cob_n$ defines local-to-global invariants.

\begin{example}
Let us be concrete in the case $n=2$. We have defined a 2-category $\cob_2$ whose 2-categorical structure encodes the notion of decomposing 2-dimensional manifolds: Fixing a 2-morphism $Q$ between the empty cobordism and itself (i.e., a compact 2-dimensional manifold), a functor $Z$ assigns a 2-morphism of $\cC$, and this is an invariant of the manifold. We can decompose this 2-dimensional manifold by expressing it as various compositions of other 2-morphisms (i.e., of other 2-dimensional cobordisms), which in turn may be be glued along 1-morphisms that themselves are expressed as compositions of various 1-morphisms. And regardless of how we decompose a 2-manifold---regardless of how we factor the 2-morphism---the original invariant $Z(Q)$ can be recovered by composing along the corresponding factorizations in $\cC$.
\end{example}

And, in the spirit that once we have understood connected manifolds, we have understood the disconnected manifolds, we may as well seek functors that are symmetric monoidal. Finally, in the mean spirit of your homotopy theorist lecturer, we may as well treat everything in sight as an $(\infty,n)$-category. Indeed, $\cob_n$ may be constructed as an $(\infty,n)$-category, whatever that is.\footnote{This is not just for the sake of using the symbol $\infty$. See Section~\ref{section. cobordism category}.} 

\begin{definition}
\index{TFT}
\index{TFT!fully extended}
\index{$\cob_n^{\coprod}$}
\index{$(\Cob_{n}^{\fr})^{\coprod}$}
Let $\cC^{\tensor}$ be a symmetric monoidal $(\infty,n)$-category. A {\em fully extended, oriented $n$-dimensional topological field theory with values in $\cC$}, or {\em $n$-dimensional oriented TFT} for short, is a symmetric monoidal functor
	\eqnn
	Z: \cob_n^{\coprod} \to \cC^{\tensor}.
	\eqnd
One can define a version of $\cob_n$ in which every manifold in sight is appropriately framed. (See Definition~\ref{defn. cobordism framing}.) We denote it by $\Cob_{n}^{\fr}$. A symmetric monoidal functor
	\eqnn
	Z: (\Cob_{n}^{\fr})^{\coprod} \to \cC^{\tensor}
	\eqnd
is called a {\em framed} $n$-dimensional TFT (with values in $\cC$).
\end{definition}

\begin{remark}
The ``fully extended'' refers to the fact that we have ``extended'' a functor in the sense of~\eqref{eqn. Z n dim not extended} to capture manifolds of lowest possible dimension (zero).
\end{remark}

Now, let us ask two natural questions:

\begin{question-numbered}[Question One]\label{question: n-cats and cob_n}
Can we make the definition of $\Cob_n$, and of $(\infty,n)$-categories, precise?
\end{question-numbered}

\begin{question-numbered}[Question Two]\label{question: classify tfts}
Can we classify fully extended topological field theories, i.e. symmetric monoidal functors $Z\colon \Cob_n \longrightarrow \cC^\otimes$ where $\cC$ is a symmetric monoidal $(\infty,n)$-category?
\end{question-numbered}

Both have ``yes'' as an answer. 

\begin{remark}
\index{$(\infty,n)$-category}
We will touch upon Question One (\ref{question: n-cats and cob_n}) more in Section~\ref{section. cobordism category} below. But let us say for now that an $(\infty,n)$-category consists of a collection of objects, and for every pair of objects $X,Y$, the collection of morphisms $\hom(X,Y)$ can be considered an $(\infty,n-1)$-category. This gives some feel of the notion by induction, beginning with the case $n=0$, where an $(\infty,0)$-category can be thought of as the same thing as a topological space. 

Put another way, an $n$-category (or $(\infty,n)$-category) is roughly meant to be a category enriched in $(n-1)$-categories (or in $(\infty,n-1)$-categories). In particular, given a pair of $k$-morphisms $f, g$, the collection $\hom(f,g)$ forms an $(\infty,n-k)$-category.
\end{remark}

We for now focus on Question Two (\ref{question: classify tfts}).

\clearpage
\section{The cobordism hypothesis}
\index{$\cC^{f.d.}$}
\index{fully dualizable}
The answer to Question Two  (\ref{question: classify tfts}) is as follows:

\begin{hypothesis}[Cobordism Hypothesis]
\index{cobordism hypothesis}
Fix a symmetric monoidal $(\infty,n)$-category $\cC^\otimes$. Then there exists an equivalence between 
	\eqnn
	\Fun^\otimes ((\Cob_{n}^{\fr})^{\coprod},\cC^{\tensor})
	\eqnd
(the $\infty$-category of symmetric monoidal functors from $(\Cob_{n}^{\fr})^{\coprod}$ to $\cC^{\tensor}$) and
	\eqnn
	\cC^{f.d.}
	\eqnd
the space of {\em fully dualizable objects} of $\cC^{\tensor}$. This equivalence is given by the evaluation map
	\eqnn
	\Fun^\otimes ((\Cob_{n}^{\fr})^{\coprod},\cC^{\tensor}) \to
	\cC^{f.d.}
	\qquad
	Z \mapsto Z(\ast^+)
	\eqnd
at the positively framed point.
\end{hypothesis}

This was an idea first proposed by Baez and Dolan~\cite{baezdolan}, and for this reason is also referred to as the Baez-Dolan cobordism hypothesis.

\begin{remark}
The reader will note that the above is stated as a hypothesis, and not a theorem; this is because the statement is widely expected to be true. Lurie has outlined a proof in~\cite{lurie-tft} and there are certainly individuals working toward a proof. (One of the announced proof methods does not follow Lurie's outline and instead uses techniques inspired by factorization homology~\cite{ayala-francis-tft}.)
\end{remark}

\begin{remark}\label{remark. TFT maps are equivalences}
We have stated that there is an equivalence of {\em spaces}. Indeed, buried in the statement is the following claim: If we have any symmetric monoidal natural transformation $Z \to Z'$, then this must actually be a natural equivalence.

(Following the philosophy of $\infty$-categories, any higher category in which all morphisms are invertible is equivalent to the $\infty$-category obtained from a space. See Remark~\ref{remark. spaces as groupoids}.)
\end{remark}

So now we must understand the notion of full dualizability. We begin in dimension one.

\clearpage
\section{The cobordism hypothesis in dimension 1, for vector spaces}

Recall that I am using the model of $\infty$-category to speak of $(\infty,1)$-categories (Convention~\ref{convention. oo-cats}). So, at least in dimension 1, there is no mystery about what the target category of a TFT is. 

Under disjoint union the objects of $(\Cob_{1}^{\fr})^{\coprod}$ are generated by 
the empty set $\emptyset$, a positive point $*^+$ and a negative 
point $*^-$. 
We give names to the values of a topological field theory on these $0$-dimensional manifolds 
\begin{align}
    Z\colon  (\Cob_{1}^{\fr})^{\coprod} &\longrightarrow \Vect^{\tensor_{\kk}} \nonumber\\
    *^+ & \longmapsto V \nonumber\\
    *^- & \longmapsto W \nonumber\\
    \emptyset & \longmapsto \kk \nonumber .
\end{align} 
Now, the identity morphism $\ast^+ \times [0,1]$ can be factored as indicated on the lefthand column of the following figure: 

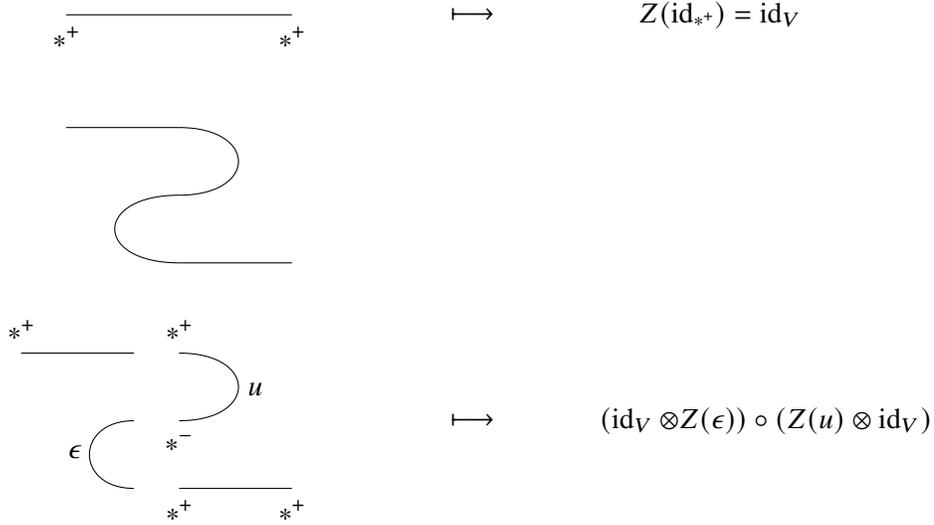
\begin{figure}
\index{Zorro's Lemma}
\caption{	The proof of Zorro's Lemma.}\label{image. zorros lemma} 
    \begin{equation}\nonumber
    \begin{tikzpicture}[scale=1.2, none/.style={circle, inner sep=0pt,minimum size=0mm},
            poin/.style={circle, inner sep=0pt,minimum size=0mm}]
    \tikzstyle{new edge style 0}=[->]
    		\node [style=none] (0) at (-9, 3.25) {};
    		\node [style=none] (1) at (-6.5, 3.25) {};
    		\node [style=none] (2) at (-9, 2) {};
    		\node [style=none] (3) at (-7.75, 2) {};
    		\node [style=none] (4) at (-7.75, 1.25) {};
    		\node [style=none] (5) at (-7.75, 0.5) {};
    		\node [style=none] (6) at (-6.5, 0.5) {};
    		\node [style=none] (7) at (-9.5, -0.5) {};
    		\node [style=none] (8) at (-8.25, -0.5) {};
    		\node [style=none] (9) at (-8.25, -1.25) {};
    		\node [style=none] (10) at (-8.25, -2) {};
    		\node [style=none] (12) at (-7.75, -0.5) {};
    		\node [style=none] (13) at (-7.75, -1.25) {};
    		\node [style=none] (14) at (-7.75, -2) {};
    		\node [style=none] (15) at (-6.5, -2) {};
    		\node [style=none] (16) at (-7.25, 4) {};
    		\node [style=none] (17) at (-8, 4) {};
    		\node [style=none] (18) at (-6.5, 3) {$*^+$};
    		\node [style=none] (19) at (-9, 3) {$*^+$};
    		\node [style=none] (20) at (-6.5, -2.25) {$*^+$};
    		\node [style=none] (21) at (-7.75, -2.25) {$*^+$};
    		\node [style=none] (22) at (-7.75, -1.5) {$*^-$};
    		\node [style=none] (23) at (-7.75, -0.25) {$*^+$};
    		\node [style=none] (24) at (-9.5, -0.25) {$*^+$};
    		\node [style=none] (25) at (-8.9, -1.6) {$\epsilon$};
    		\node [style=none] (26) at (-6.9, -0.9) {$u$};
    		\node [style=none] (27) at (-4.5, 3.25) {};
    		\node [style=none] (28) at (-4.5, 3.25) {$\longmapsto$};
    		\node [style=none] (29) at (-1.75, 3.25) {$Z(\id_{*^+})=\id_V$};
    		\node [style=none] (30) at (-4.5, 1.25) {};
    		%\node [style=none] (31) at (-4.5, 1.25) {$\longmapsto$};
    		%\node [style=none] (32) at (-1.75, 1.25) {$Z(\id_{*^+})=\id_V$};
    		\node [style=none] (33) at (-4.5, -1.25) {};
    		\node [style=none] (34) at (-4.5, -1.25) {$\longmapsto$};
    		\node [style=none] (35) at (-1.25, -1.25) {$(\id_V\otimes Z(\epsilon))\circ (Z(u)\otimes \id_V)$};
    		\draw (0.center) to (1.center);
    		\draw (2.center) to (3.center);
    		\draw [in=0, out=0, looseness=3.00] (3.center) to (4.center);
    		\draw [bend right=90, looseness=3.25] (4.center) to (5.center);
    		\draw (5.center) to (6.center);
    		\draw (7.center) to (8.center);
    		\draw [bend right=90, looseness=2.25] (9.center) to (10.center);
    		\draw [in=0, out=0, looseness=3.00] (12.center) to (13.center);
    		\draw (14.center) to (15.center);
    		%\draw [style=new edge style 0] (16.center) to (17.center);
    \end{tikzpicture}
    \end{equation}
\end{figure}

\begin{remark}
Let us explain Figure~\ref{image. zorros lemma}. We have drawn a ``horseshoe'' cobordism 
	\eqnn
	\left(\ast^+ \coprod \ast^- \right) \leftarrow \emptyset : u
	\eqnd 
and  a cohorseshoe cobordism 
	\eqnn
	\emptyset \leftarrow \left(\ast^- \coprod \ast^+ \right) : \epsilon,
	\eqnd
where as before, are we reading our cobordisms as propagating from right to left. Thus, the bottom-left composition of the image is read as
	\eqnn
	(\id_{\ast^+} \coprod \epsilon)
	\circ
	(u \coprod \id_{\ast^+}).
	\eqnd
\end{remark}

\begin{notation}
Out of sloth, and to save notational clutter, we will denote $Z(u)$ by $u$ as well. Likewise for $\epsilon$. We hope that from context, it will be clear in which category ($\Cob_n$ or $\cC^{\tensor}$) these morphisms live.
\end{notation}

From the decomposition we can directly deduce the following lemma.

\begin{lemma}[Zorro's Lemma]
\index{Zorro's Lemma}
    Let $Z\colon \Cob_{1}\longrightarrow \cC$ be a topological field theory and let $V = Z(\ast^+)$.
    Then 
    	\eqn\label{Eq: Zorro}
        	\id_V = (\id_V\otimes \epsilon)\circ (u\otimes \id_V).
		\eqnd
\end{lemma}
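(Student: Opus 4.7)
The plan is to observe that Zorro's Lemma is essentially a direct translation of a geometric fact about 1-dimensional cobordisms into an algebraic identity, via the functoriality and symmetric monoidality of $Z$. The geometric fact is precisely what Figure~\ref{image. zorros lemma} depicts: the identity cobordism $\ast^+ \times [0,1]$, viewed as a morphism $\ast^+ \to \ast^+$ in $\Cob_1^{\fr}$, is diffeomorphic (rel boundary) to the ``S-shaped'' cobordism obtained by bending an interval. Since $Z$ is a functor, it sends equal (or, in the $(\infty,1)$-categorical sense, equivalent) morphisms in the domain to equal (or equivalent) morphisms in the target.

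First I would make precise the S-shaped decomposition. Reading cobordisms right-to-left, the composition $(\id_{\ast^+} \coprod \epsilon) \circ (u \coprod \id_{\ast^+})$ is a 1-dimensional manifold with boundary consisting of a single $\ast^+$ on each side, realized as a ``zigzag'' that goes up, bends around the cohorseshoe $u$, comes back, and then cancels through the horseshoe $\epsilon$. This zigzag is diffeomorphic (as a framed cobordism, once one is careful about the framing compatibility between $u$ and $\epsilon$) to the straight cylinder $\ast^+ \times [0,1]$, i.e.\ to the identity morphism $\id_{\ast^+}$ in $\Cob_1^{\fr}$. One would verify this by exhibiting an explicit isotopy of embedded 1-manifolds that straightens the zigzag; this is the geometric content of the so-called ``snake'' or ``triangle'' identity.

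Next I would apply $Z$. Because $Z$ is symmetric monoidal, we have $Z(f \coprod g) \simeq Z(f) \otimes Z(g)$ and $Z(\id_{\ast^+}) = \id_V$; because $Z$ is a functor, it preserves composition. Applying these to the equality of cobordisms above yields
\begin{equation*}
\id_V = Z(\id_{\ast^+}) = Z\bigl((\id_{\ast^+} \coprod \epsilon)\circ (u \coprod \id_{\ast^+})\bigr) = (\id_V \otimes \epsilon) \circ (u \otimes \id_V),
\end{equation*}
which is exactly~\eqref{Eq: Zorro}. (Here we have written $\epsilon, u$ for both the cobordisms and their images under $Z$, as the notation promised.)

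The main obstacle is not an algebraic one but a geometric bookkeeping one: one must carefully check that the zigzag and the straight cylinder agree as \emph{framed} cobordisms, not merely as oriented cobordisms. The framings at the two boundary points of the zigzag must match those at the endpoints of the cylinder, which forces specific choices of framings on $u$ and $\epsilon$ (this is precisely why, in the fully extended framed story, $V$ ends up being dualizable with a prescribed duality datum rather than merely admitting some abstract dual). Once the framing conventions are set so that the isotopy straightening the zigzag is a framed isotopy, the remainder of the argument is the purely formal application of symmetric monoidality and functoriality described above. By the same argument, applied to the mirror-image zigzag, one obtains the companion identity $\id_W = (\epsilon \otimes \id_W)\circ (\id_W \otimes u)$, so that $V$ and $W$ are exhibited as duals.
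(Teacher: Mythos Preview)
Your proposal is correct and follows essentially the same approach as the paper: both establish the geometric relation $\id_{\ast^+} = (\id_{\ast^+} \coprod \epsilon)\circ(u \coprod \id_{\ast^+})$ in $\Cob_1$ via the snake-straightening isotopy of Figure~\ref{image. zorros lemma}, and then apply the symmetric monoidal functor $Z$. Your added discussion of framing compatibility is a reasonable caution, though in dimension~1 a framing is the same as an orientation (Exercise~\ref{exercise. framing in 1 dim is orientation}), so no extra care beyond the oriented case is actually required here.
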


\begin{proof}
Consider the ``snake-like'' factorization in the bottom-left of Figure~\ref{image. zorros lemma}. This composite is equivalent to the identity cobordism as a cobordism, hence applying $Z$ to the relation
	\eqnn
	\id_{\ast^+}
	=
	(\id_{\ast^+} \coprod \epsilon)
	\circ
	(u \coprod \id_{\ast^+})	
	\eqnd
in $\Cob_1$, the result follows.
\end{proof}

\begin{remark}
Zorro is not a mathematician, but a masked avenger.
\end{remark}

This actually puts a strong restriction on the values that $Z(\ast^+)$ can take:

\begin{lemma}
$V = Z(\ast^+)$ must be finite dimensional.
\end{lemma}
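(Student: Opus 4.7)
The plan is to extract finite-dimensionality directly from Zorro's Lemma \eqref{Eq: Zorro} by exploiting the fact that any element of a tensor product of $\kk$-vector spaces is a \emph{finite} sum of elementary tensors. So first I would unpack the data: applying $Z$ to the horseshoe $u:\emptyset \to \ast^+ \coprod \ast^-$ and the cohorseshoe $\epsilon: \ast^- \coprod \ast^+ \to \emptyset$, and using symmetric monoidality of $Z$, I obtain $\kk$-linear maps $u: \kk \to V \tensor W$ and $\epsilon: W \tensor V \to \kk$, where $W = Z(\ast^-)$.

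Next I would evaluate $u$ on the unit. Because $V\tensor_\kk W$ is a tensor product of vector spaces, there exist an integer $n \geq 0$ and elements $v_1,\ldots,v_n \in V$, $w_1,\ldots,w_n \in W$ with
\eqnn
u(1) \;=\; \sum_{i=1}^{n} v_i \tensor w_i.
\eqnd
This finiteness is the key input; it is the one non-categorical fact being used, and it is exactly what fails for an abstract symmetric monoidal category without an underlying ``set of elements'' picture.

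Then I would feed this expression into Zorro's Lemma. For any $v \in V$, chasing $v$ through the composite $(\id_V \tensor \epsilon) \circ (u \tensor \id_V)$ gives
\eqnn
v \;=\; (\id_V \tensor \epsilon)\!\left( \sum_{i=1}^n v_i \tensor w_i \tensor v \right) \;=\; \sum_{i=1}^n \epsilon(w_i \tensor v)\, v_i.
\eqnd
Since $\epsilon(w_i \tensor v) \in \kk$, this exhibits every $v \in V$ as a $\kk$-linear combination of $v_1,\ldots,v_n$. Hence $\{v_1,\ldots,v_n\}$ spans $V$, and $\dim_\kk V \leq n < \infty$.

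I do not expect a real obstacle here; the only subtlety is keeping track of the ordering of tensor factors (so that $\epsilon$ pairs the correct $W$ with the correct $V$), which is handled automatically by the symmetric monoidal structure on $Z$ and by reading the pictures in Figure~\ref{image. zorros lemma} consistently. As a closing remark, I would note that this argument is the prototype of ``fully dualizable implies finite/compact'' statements: the same snake relation, applied in a general symmetric monoidal $(\infty,n)$-category, is precisely what will later force $Z(\ast^+)$ to be dualizable in the sense demanded by the cobordism hypothesis.
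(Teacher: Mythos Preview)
Your proof is correct and follows essentially the same route as the paper: write $u(1)$ as a finite sum of elementary tensors, feed it through Zorro's identity, and read off that the $v_i$ span $V$. If anything, your bookkeeping of the tensor-factor ordering is cleaner than the paper's own write-up.
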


\begin{proof}
As before let $W = Z(\ast^{-})$. We have linear maps 
    \begin{equation}\nonumber
        \epsilon \colon  V\otimes W \longrightarrow \kk,
        \qquad
        v\otimes w \longmapsto \langle v, w \rangle
    \end{equation}
and 
    \begin{equation}\nonumber
        \epsilon\colon \kk \longrightarrow W\otimes V, \qquad
        1  \longmapsto \sum_{i,j} a_{i,j} w_i\otimes v_j \ \ ,
    \end{equation}
where we note that the summation is {\em finite} (by the definition of the tensor product of vector spaces). 
The composition of linear maps on the right side of \eqref{Eq: Zorro} is 
    \begin{equation}\nonumber
        v \longmapsto v\otimes \left( \sum_{i,j}a_{i,j} w_i\otimes v_j \right) \longmapsto \sum_{i,j} a_{i,j}
        \langle v, w_i \rangle v_j \ \ .
    \end{equation}
We get from \eqref{Eq: Zorro} that $V$ must be spanned by finitely many vectors $v_j$. 
\end{proof}

\begin{remark}
This puts us well on the way to verifying the cobordism hypothesis in dimension 1, where there is again no difference between the framing and orientation conditions. Also, it will follow from the definition that a fully dualizable object of $\Vect_{\kk}^{\tensor_\kk}$ will be a finite-dimensional $\kk$-vector space.

What remains to prove is that $Z(\ast^+)$ determines all of $Z$, and that any natural transformation of symmetric monoidal functors $Z \to Z'$ actually induces an {\em isomorphism} of vector spaces $Z(\ast^+) \to Z'(\ast^+)$.  (This is what we mean by ``space'' of objects; an $\infty$-category where all morphisms are invertible).

We will leave this as an exercise to the reader (Exercise~\ref{exercise. cob hyp in dim 1}.)
\end{remark}

This concludes our example for $n=1$ and $\cC^{\tensor} = \Vect_{\kk}^{\tensor_{\kk}}$.

\clearpage
\section{Full dualizability}

From the previous example we can extract the following slogan: full dualizability is a generalization
of ``being finite dimensional" to higher categories. 

\begin{definition}\label{defn. 1-dualizable}
\index{fully dualizable}
Fix a monoidal $\infty$-category $\cC$. An object $V\in \cC$ is {\em dualizable}, or {\em 1-dualizable} if 
\begin{itemize}
\item $V$ admits a left dual. That is, there exists an object $V_L$ and morphisms $u\colon \One_\cC \longrightarrow V\otimes V_L$ 
and $\epsilon \colon V_L\otimes V \longrightarrow \One_\cC$ satisfying the formula
	\eqn\label{eqn. 1-dualizable object}
        	\id_V = (\id_V\otimes \epsilon)\circ (u\otimes \id_V).
	\eqnd
	(The reader should compare this to \eqref{Eq: Zorro}.)
\item $V$ admits a right dual. That is, there exists an object $V_R$ and morphisms $u_R\colon \One_\cC \longrightarrow V_R\otimes V$ 
and $\epsilon_R \colon V\otimes V_R \longrightarrow \One_\cC$ satisfying an appropriate analogue 
of~\eqref{eqn. 1-dualizable object}.
\end{itemize}
\end{definition}

\begin{remark}\label{remark. units are dualizable and left-right duals}
The monoidal unit $\One_\cC$ is always dualizable.
When $\cC$ is symmetric monoidal then $V_L\cong V_R$ and the existence of a left/right dual implies
that $V$ is dualizable. 

Thus, from hereon (because $\cC^{\tensor}$ will always be symmetric monoidal for us) we will make no distinction between having right and left duals.
\end{remark}

\begin{example}
Being dualizable is a delicate balance between the symmetric monoidal structure chosen, and properties of the objects and morphisms in the category. Here is a list of dualizable objects in various categories:

\begin{center}
    \begin{tabular}{|c|c|}
    \hline 
    $\cC^{\tensor}$ & dualizable objects \\ 
    \hline 
    $\Set^\times$ & The terminal set (a point) \\ 
    \hline 
    $\Cat^\times$ & The terminal category (a point) \\ 
    \hline 
    $\Vect^\oplus$ & 0 \\ 
    \hline 
    $\Vect^\tensor$ & Finite-dimensional vector spaces \\ 
    \hline 
    $\ComAlg_\kk^{\tensor_{\kk}}$ & $\kk$ \\
    \hline
    $\AlgBimod$ & all algebras \\ 
    \hline 
    \end{tabular}  
\end{center}

Here $\AlgBimod$ is the $\infty$-category with objects algebras, morphisms 
bimodules and 2-morphisms bimodule equivalences. The composition is given by the 
relative tensor product.
\end{example}

Now we turn our attention to the definition of higher notions of dualizability; this relies on defining when a {\em morphism} (not an object) is dualizable.

\begin{example}\label{example. adjunctions}
As motivation we consider the classical notion of adjunctions. This illustrates when 2 functors are dualizable.

Let $\cD$ and $\cE$ be categories together
with a pair of functors
    \begin{equation}\nonumber
	    L\colon \cD \rightleftarrows \cE \colon R \ \ .
    \end{equation}
Recall that $L$ is left adjoint to $R$ (and $R$ is right adjoint to $L$) if and only if there exists a pair of natural transformations
	\eqnn
	u\colon \id_\cD \longrightarrow R\circ L
	\qquad
	\epsilon \colon L\circ R \longrightarrow \id_\cE
	\eqnd
such that the compositions
	\eqnn
		\xymatrix{
		L \ar[rr]^-{\id_L \circ u}
			&& LRL \ar[rr]^-{ \epsilon \circ \id_L }
			&&L
		}
	\eqnd
and
	\eqnn
		\xymatrix{
		R \ar[rr]^-{u \circ \id_R}
		&& RLR \ar[rr]^-{\id_R \circ \epsilon}
		&& R
		}
	\eqnd
agree with the identity natural transformations (from $L$ to itself, and from $R$ to itself, respectively). 
\end{example}

These are the classic examples of dualizable morphisms in an $(\infty,2)$-category:

\begin{definition}\label{defn. 2-dualizable}
Let $\cC$ be an $(\infty,2)$-category and fix two objects $D$ and $E$. Then a morphism $f: D \to E$ is {\em right dualizable} if there exists another morphism $R: E \to D$, along with two 2-morphisms
	\eqnn
	u: \id_{D} \to Rf,
	\qquad
	\epsilon: fR \to \id_{E}
	\eqnd
satisfying the adjointness conditions:
	\eqnn
	(\epsilon \circ \id_f) \circ (\id_f \circ u) \simeq \id_f,
	\qquad
	(\id_R \circ \epsilon) \circ (u \circ \id_R) \simeq \id_R.
	\eqnd
(That is, there {\em exists} an equivalence between the indicated compositions and the identity morphisms.)
We then call $f$ a {\em left dual} to $R$, and we call $R$ a {\em right dual} to $f$. Likewise, we say that $f$ is left dualizable if it admits a left dual. 

We say $f$ is {\em dualizable}, or {\em admits adjoints}, or is {\em 2-dualizable}, if it admits both left and right duals.
\end{definition}

One may begin to be bothered by the notation, and also bothered by the similarities apparent in the dualizability condition for objects (Definition~\ref{defn. 1-dualizable}), for morphisms (Definition~\ref{defn. 2-dualizable}), and the example of adjunctions in Example~\ref{example. adjunctions}.  Let us clarify the sense in which ``every formula that has appeared is actually the same formula.''

\begin{example}
The notion of an object being dualizable can be phrased as a case of when a morphism is 2-dualizable.

That is, let $\cC$ be a monoidal category. Denote by $B\cC$ the 2-category with one object $\star$ and endomorphisms given by
$\End(\star)=\cC$. The composition in $B\cC$ is given by the tensor product 
$\otimes \colon \cC\times \cC \longrightarrow \cC$.
The 1-morphisms in this 2-category correspond to the objects of $\cC$, and the 2-morphisms in $B\cC$ are the morphisms of $\cC$. 

Then an object of $\cC^{\tensor}$ is dualizable if and only if it is dualizable as a morphism in $B\cC$. 
\end{example}

\begin{example}
Let $\Cattwo$ be the 2-category of categories: Objects are categories, morphisms are functors, and 2-morphisms are natural transformations ({\em not} just natural isomorphisms, as in the case of $\Cat$). 

Then a morphism in $\Cattwo$ is dualizable if and only if it admits both a right and a left adjoints.
\end{example}

We would recommend that the reader only memorize the adjointness relations for functors; this is enough to recover all the relations.

Finally, the notion of 2-dualizability for morphisms is enough to define full dualizability:

\begin{definition}
A $k$-morphism $X\longrightarrow Y$ in an $(\infty,n)$-category is called {\em dualizable}, or {\em $(k+1)$-dualizable},
if it is 2-dualizable as a 1-morphism in $\hom(X,Y)^{(2)}$.  

Here, note that $\hom(X,Y)$ is naturally an $(\infty,n-k)$-category. The notation $\hom(X,Y)^{(2)}$ indicates the  $(\infty,2)$-category obtained by discarding non-invertible morphisms above degree 3.
\end{definition}

\begin{definition}[\cite{lurie-tft}]\label{defn. fully dualizable}
\index{fully dualizable}
An object $X$ inside an symmetric monoidal $(\infty,n)$-category $\cC^\otimes$ is {\em fully dualizable}
if and only if 
\begin{itemize}
\item $X$ is dualizable (this requires the existence of certain 1-morphisms).
\item These 1-morphisms, and their duals, and all their composites, are 2-dualizable (this requires the existence of certain 2-morphisms).
\item These 2-morphisms, and their duals, and all their composites, are in turn 3-dualizable,
\item And this pattern continues until we verify that the required $(n-1)$-morphisms are $n$-dualizable.
\end{itemize}
\end{definition}

\begin{remark}
Inductive definitions can seem opaque at times, but let me emphasize that the hardest part of understanding this definition is getting used to what an $n$-category, or $(\infty,n)$-category, is. 

Once one then remembers the adjointness relations (which are fairly quick to internalize, as it turns out), it is very much possible to verify whether or not an object is fully dualizable. 
\end{remark}

\begin{sadness}
We are already two hours overtime so we won't be able to give great examples in-depth; they'll remain as exercises. 
\end{sadness}

\clearpage
\section{The point is fully dualizable}
\index{fully dualizable}
Now that we have seen the definition of full dualizability (Definition~\ref{defn. fully dualizable}), let us put the cobordism hypothesis to the test in the simplest possible example: Since the identity functor $(\Cob_{n}^{\fr})^{\coprod}  \to (\Cob_{n}^{\fr})^{\coprod} $ is surely a TFT with values in $\cC^{\tensor} = (\Cob_{n}^{\fr})^{\coprod}$, can we verify that $\ast^+$ is a fully dualizable object in $(\Cob_{n}^{\fr})^{\coprod}$?

\begin{warning-numbered}
In what follows, we only illustrate that the point in $\Cob_{n}$ (the oriented case) is dualizable. We must be more careful in the framed case. See Section~\ref{section. framed cobordisms}. We are illustrating only in the oriented case mainly to give a feel for what kinds of pictures one must draw in this business.
\end{warning-numbered}

\begin{example}
Let us first see that $\ast^+ \in \Cob_{1}$ is a dualizable, or 1-dualizable, object. Indeed, the proof of Zorro's Lemma relies on this fact. (See Figure~\ref{image. zorros lemma}.) 

What we discovered there can be summarized as follows using our new vocabulary: The point with opposite orientation $\ast^-$ is both a left and right dual object to $\ast^+$, and the unit and counit maps exhibiting the adjunctions are the (co)horseshoes,
\begin{equation}\nonumber
        \begin{tikzpicture}[scale=0.7, none/.style={circle, inner sep=0pt,minimum size=0mm},
                poin/.style={circle, inner sep=0pt,minimum size=0mm}]
        \tikzstyle{new edge style 0}=[->]
        		\node [style=none] (2) at (-7.5, 2) {$\ast^+$};
        		\node [style=none] (0) at (-7, 2) {};
        		\node [style=none] (M) at (-4, 1) { = $u$};
        		\node [style=none] (1) at (-7, 0) {};
        		\node [style=none] (3) at (-7.5, 0) {$\ast^-$};
        		\draw [bend right=90, looseness=2.50] (1.center) to (0.center);
        \end{tikzpicture}
    \qquad
    \text{and}
    \qquad
        \begin{tikzpicture}[scale=0.7, none/.style={circle, inner sep=0pt,minimum size=0mm},
                poin/.style={circle, inner sep=0pt,minimum size=0mm}]
        \tikzstyle{new edge style 0}=[->]
        		\node [style=none] (0) at (-7, 2) {};
        		\node [style=none] (1) at (-7, 0) {};
        		\node [style=none] (M) at (-5, 1) {$=\epsilon$};
        		\node [style=none] (2) at (-6.5, 2) {$\ast^-$};
        		\node [style=none] (3) at (-6.5, 0) {$\ast^+$};
        		\draw [bend left=90, looseness=2.50] (1.center) to (0.center);
        \end{tikzpicture}.
\end{equation} 
We recall that our conventions are to read cobordisms from right to left, so $u$ is a map from $\emptyset$ to $\ast^+ \coprod \ast^-$ while $\epsilon$ is a map from $\ast^- \coprod \ast^+$ to $\emptyset$. 

\end{example}

\clearpage
\begin{example}\label{example. point is 2-dualizable}
Now let us see that $\ast^+$ is fully dualizable in $\Cob_{2}$. For this, we must verify that the 1-morphisms that arose in verifying that $\ast^+$ is 1-dualizable are 2-dualizable (by Definition~\ref{defn. fully dualizable}), and repeat this process for any dual 1-morphisms that arise, along with all possible composites. 

Let us check only that $u$ is right dualizable. (To finish the proof, one must also verify that $u$ is left dualizable, then verify that $\epsilon$ is 2-dualizable; then that the ``opposite orientation'' cobordisms $u^\vee$ and $\epsilon^\vee$ are also 2-dualizable. In the framed case, ``new'' cobordisms with distinct framings will appear---these all have underlying manifolds given by cohorseshoes and their composites, but with various framings. One must check for all these as well.) 

Let $u^\vee$ be $u$ with the opposite orientation; we read this as a horseshoe-shaped cobordism from $\ast^+ \coprod \ast^-$ to $\emptyset$. We claim this is the right adjoint to $u$. For example, the composite $u^\vee \circ u$ is a circle; and hence a unit for this adjoint pair must be some cobordism from the (identity cobordism of the) empty manifold to the circle. Figure~\ref{image. cap} shows an obvious candidate,
	\begin{figure}[h]
	\caption{The cap}
	\label{image. cap}
    \begin{center}
    \begin{overpic}[scale=0.6]{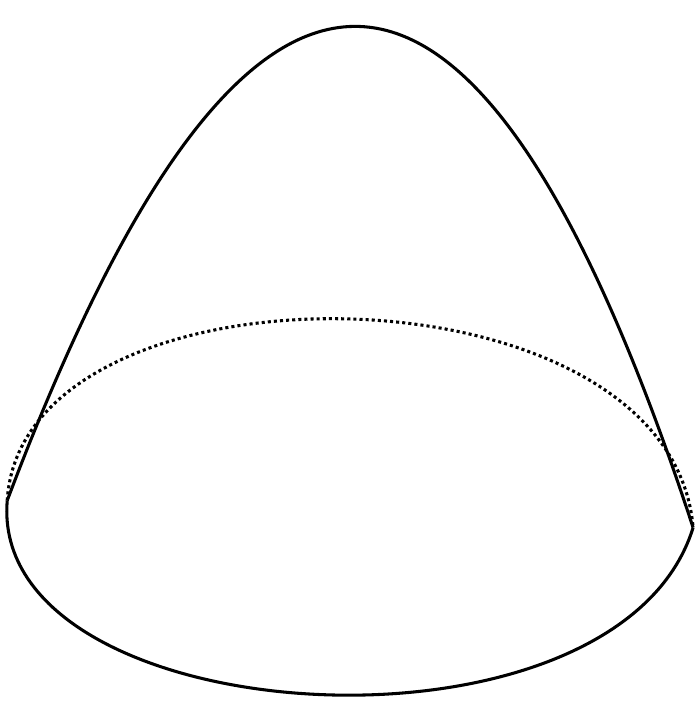}
    \end{overpic}
    \end{center}
    \end{figure}
    \normalsize
which one might call the ``cap.''

	\begin{figure}[h]
	\caption{The saddle}
	\label{image. saddle}
	\begin{center}  
    \begin{overpic}[scale=0.6]{saddle.pdf}
    \end{overpic}
    \end{center}
    \end{figure}

The other composite $u \circ u^\vee$ is a disjoint union of a horseshoe and a cohorseshoe, giving a morphism from $\ast^+ \coprod \ast^-$ to itself. A counit for this adjoint pair must be a cobordism from this disjoint union to the identity cobordism of $\ast^+ \coprod \ast^-$. We have seen such a cobordism before, given by the saddle, as drawn in Figure~\ref{image. saddle}. 
    
	\begin{figure}
	\caption{The right dualizability of $u$:}
	\label{image. u is dualizable}	
        \begin{center}
        \begin{overpic}[scale=0.6]{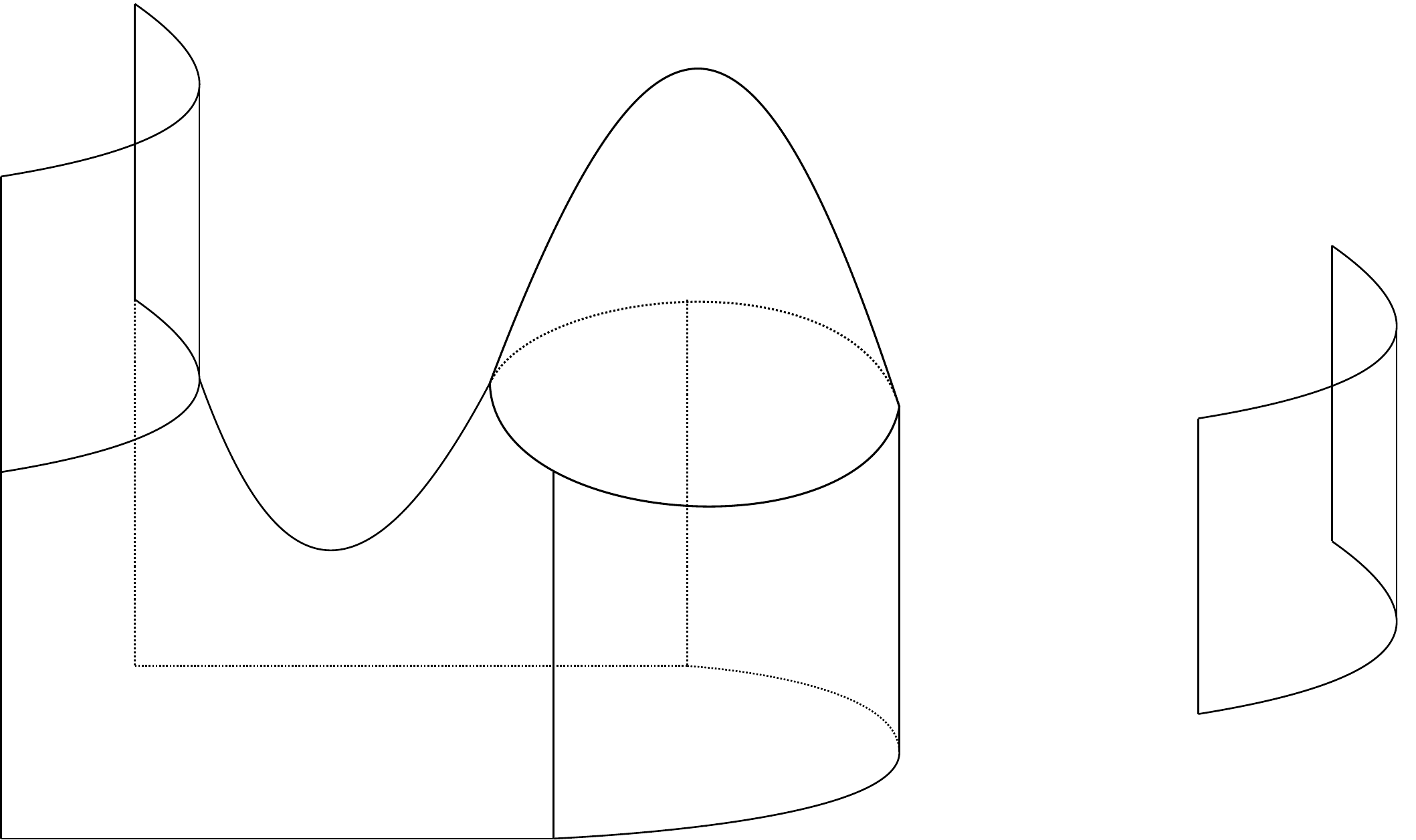}
        \put(73,20){$\cong$}
        \end{overpic}
        \end{center}
        \normalsize
	\end{figure}
    
Figure~\ref{image. u is dualizable} proves that the cap and the saddle indeed exhibit $u^\vee$ as the right adjoint to $u$. (Composition is read from top to bottom.)

\end{example}

\clearpage
\begin{example}
Let us now sketch the dualizability of an arbitrary $k$-morphism in $\Cob_n$. To that end, fix two $(k-1)$-morphisms $V$ and $W$, and let $W \leftarrow V: X$ be a cobordism from $V$ to $W$. We draw $X$ below as a jagged line, to indicate some orientation on $X$. When the jagged curve is flipped upside down, we see $X$ with the opposite orientation.

We claim that Figure~\ref{image. all cobordisms dualizable} illustrates the fact that $X^\vee$ ($X$ with the opposite orientation, read as a cobordism from $W$ to $V$) is the right dual to $X$. 
	\begin{figure}
	\caption{The dualizability of $X$:}
	\label{image. all cobordisms dualizable}
       $\,$ \\ $\,$ \\	
        \begin{center}
        \begin{overpic}[scale=0.6]{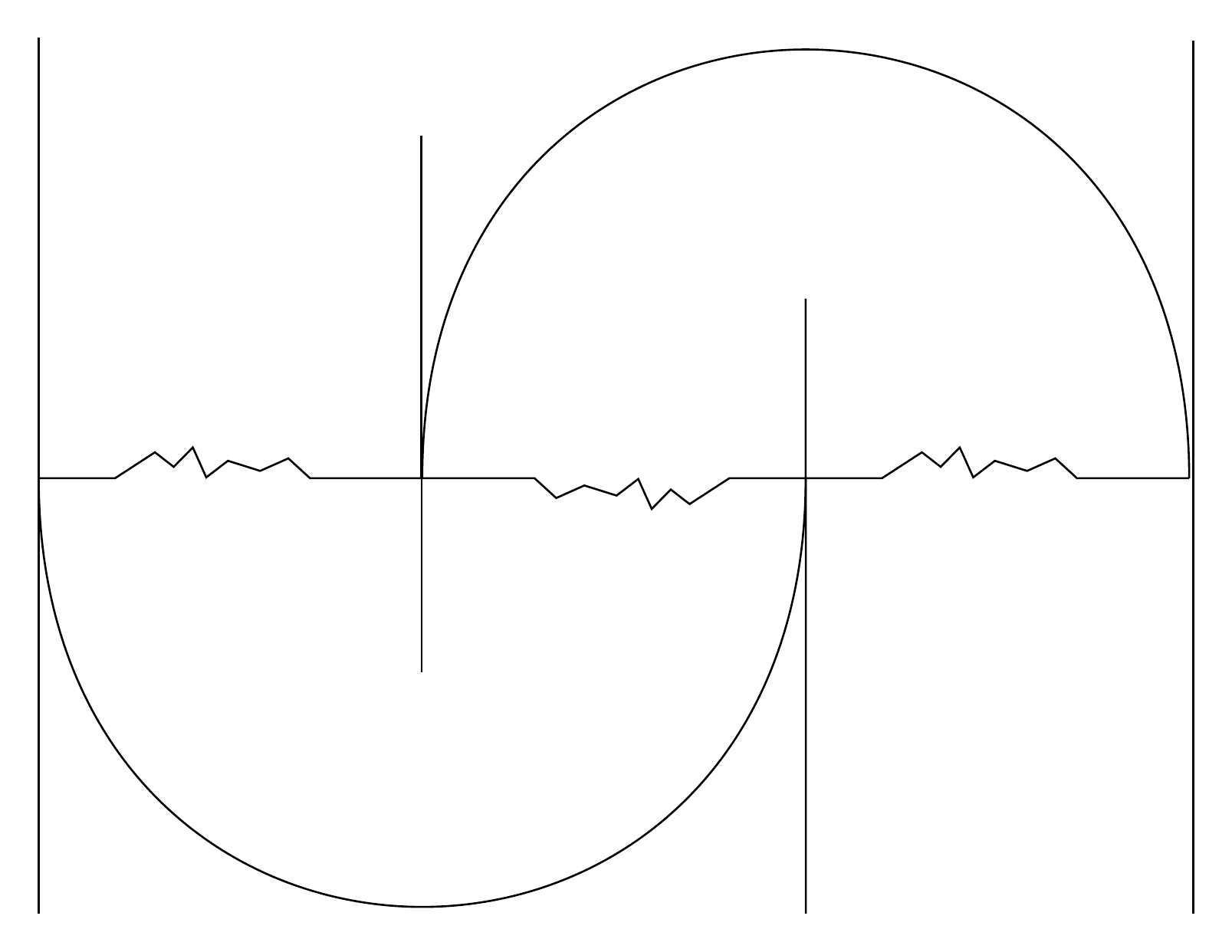}
        \put(15,77){X} 
        \put(40,77){$\coprod$} 
        \put(65,77){$\id_V$}
        \put(17,41){$X$}
        \put(48,40){$X^\vee$}
        \put(80,41){$X$}
        \put(-3,35){$W$}
        \put(30,35){$V$}
        \put(67,35){$W$}
        \put(98,35){$V$}
        \put(30,-5){$\id_W$}
        \put(60,-5){$\coprod$}
        \put(80,-5){$X$}
        \end{overpic}
       $\,$ \\ $\,$ \\
       \end{center}
	\end{figure}        
        
Let us read the image starting in the middle, with the leftmost copy of $X$. We have ``rotated'' $X$ in a semicircle. The equator of this semicircle is hence labeled by the composition $X \circ X^\vee$. Then we have extended the rotation of $X$ by a constant copy of $W$ in the bottom-left region, so that we obtain a cobordism from $X \circ X^\vee$ to $\id_W$. This explains the lower-left region of the image. (There may of course appear to be singularities in the figure; but we can guarantee that the apparent singularities of this process takes place in a collared region of the cobordism $X$, hence this cobordism has no singularities.)

The bottom-right region is simply a copy of $X$ that is extended constantly in the vertical direction of the page. Thus, the bottom region as a whole depicts a cobordism from $X \circ X^\vee \circ X$ to $\id_W \coprod X$.

Likewise, the top-right region is obtained by rotating $X^\vee$ and extended by a constant copy of $V$, and so forth. The top half (above the jagged equator) exhibits a morphism from $X \coprod \id_V$ to $X \circ X^\vee \circ X$.

We then note that this colon-shaped picture may be isotoped to obtain a single, straight copy of the cobordism $X$; this is the sketch of a proof of one adjunction relation. We do note prove the others, leaving the sketches to the reader.
\end{example}

\begin{remark}
Informally, the image above is obtained by drawing the ``direct product'' of Zorro's Lemma with $X$, while filling in the empty regions of Zorro's Lemma with the obvious constant copies of $V$ and $W$.
\end{remark}

\begin{remark}
The reader may benefit from projecting the adjunction picture from Figure~\ref{image. u is dualizable} to $\RR^2$ to visualize the colon-shaped figure in Figure~\ref{image. all cobordisms dualizable}.  For example, the saddle is indeed obtained by extending a rotation of $u \circ u^\vee$.
\end{remark}

\clearpage
\section{Factorization homology as a topological field theory}

Now let us explain how factorization homology provides an interesting example of TFTs. For this, we must describe the target $(\infty,n)$-category. We will not give a rigorous construction of it. (The interested reader may consult~\cite{haugseng}.)

\begin{definition}[Informal.]\label{defn. morita category}
\index{Morita category}
\index{$\morita_{\EE_n}(\cD^{\tensor})$}
Let $\cD^{\tensor}$ denote a symmetric monoidal $\infty$-category. Then the {\em Morita category of $\EE_n$-algebras} in $\cD^{\tensor}$ is an $(\infty,n)$-category 
	\eqnn
	\morita_{\EE_n}(\cD^{\tensor})
	\eqnd
with the following description:
\begin{itemize}
	\item An object is an $\EE_n$-algebra in $\cD^{\tensor}$.
	\item Given two objects $A$ and $A'$, a morphism between them is an $(A,A')$ bimodule $M$, equipped with a compatible $E_{n-1}$-algebra structure.
	\item More generally, given two $k$-morphisms $M$ and $M'$, which are in particular $\EE_{n-k}$-algebras, a $(k+1)$-morphism between them is an $(M,M')$ bimodule equipped with a compatible $\EE_{n-k-1}$-algebra structure.
	\item Finally, given two $(n-1)$-morphisms $M$ and $M'$ (which are, in particular, $\EE_1$-algebras), the space of $n$-morphisms between them is the space of pointed $(M,M')$ bimodules. As a Kan complex, vertices are given by bimodules, edges are given by equivalences of bimodules, triangles are given by homotopy-commuting triangles whose edges are equivalences of bimodules, and so forth. We demand that these bimodules are all {\em pointed}, meaning that they receive maps from the monoidal unit of $\cD^{\tensor}$.
\end{itemize}
\end{definition}

\begin{remark}
Let us leave a cryptic remark. Our definition of $\EE_n$-algebras allowed us to imagine algebras as living in an open disk, say $\RR^n$. In the above, one should think of a 1-morphism $M$ as living on $\RR^n$ equipped with the data of an embedded hyperplane. $M$ itself ``lives on'' the hyperplane, while the two half-spaces on either side of the hyperplane admit embeddings from disks labeled by $A$ (for disks embedded in one half-space), or by $A'$ (for disks embedded in the other half-space). 

Likewise, a $k$-morphism $M$ is an object that one imagines living on a flag of linear subspaces of $\RR^n$, where the flag consists of planes of codimension 1 through $k$ inside $\RR^n$. The $k$-morphism itself lives on the codimension $k$ plane, and this plane cuts the codimension $(k-1)$ plane into two half-spaces. Each of these half-spaces is labeled by the $(k-1)$-morphisms that act in a way rendering $M$ a bimodule over the algebras living on these half-spaces.

For ways to make this intuition precise, we refer the reader to~\cite{gwilliam-scheimbauer} and~\cite{aft-1,aft-2}.
\end{remark}

\begin{remark}
There is another gadget one might call the Morita category of $\EE_n$-algebras, and this gadget is an $(\infty,n+1)$-category. The $n$-morphisms are given by bimodules of $\EE_1$-algebras, and given two such bimodules $M$ and $M'$ over the same pair of algebras, we define $\hom(M,M')$ to be the space of intertwiners (i.e., of bimodule maps). In particular, there are $(n+1)$-morphisms which are not invertible, as not all bimodule maps are equivalences. (Put another way, $\morita_{\EE_n}$ is obtained by throwing out all non-invertible $(n+1)$-morphisms.)

It is a far more subtle business to determine which objects are fully dualizable in this $(\infty,n+1)$-category. But as we will see below, the cobordism hypothesis will tell us that every $\EE_n$-algebra is fully dualizable in the $(\infty,n)$-category $\morita_{\EE_n}$ from Definition~\ref{defn. morita category}.
\end{remark}

Fix a positive integer $n$. We begin to see how factorization homology might define a TFT valued in $\morita_{\EE_n}$ as follows.
\begin{itemize}
	\item When given a point $\ast$, one imagines a fattened neighborhood $\ast \times \RR^n$, and we equip this with a framing. Assign to it an $\EE_n$-algebra $A$.
	\item Given a 1-morphism $X$ in $\Cob_{n}^{\fr}$, we think of it as a framed fattened neighborhood, $X \times \RR^{n-1}$. Because $X$ is a cobordism, it may be collared; but by the usual arguments we saw when discussing factorization homology, this collaring guarantees that the factorization homology of $X$ is a bimodule over its collaring boundary manifolds. So this defines a 1-morphism in $\morita_{\EE_n}$.
	\item More generally, given a $k$-morphism $X$, it is collared by two $(k-1)$-dimensional manifolds. Again thinking of everything as thickened, we view $X \times \RR^{n-k}$ as a framed $n$-manifold, and take its factorization homology. The collaring endows this with a bimodule structure receiving actions from the factorization homology of the two $(k-1)$-dimensional boundary manifolds.
	\item Finally, given a framed $n$-manifold $X$, we obtain a bimodule which is {\em pointed} by virtue of receiving a map from the empty manifold.
\end{itemize} 

The above intuition can be made precise:

\begin{theorem}[Scheimbauer~\cite{scheimbauer}]\label{theorem.facthom-TFT}
Fix an $\EE_n$-algebra $A$ in a symmetric monoidal $(\infty,1)$-category $\cD^\otimes$. 
Factorization homology defines an $n$-dimensional framed topological field theory
        \begin{align}
        (\Cob_{n}^{\fr})^{\coprod} \longrightarrow \morita_{\EE_n}(\cD^\otimes) \nonumber\\
        X^k \mapsto \int_{X^k\times \R^{n-k}} A \ \ . \nonumber
        \end{align} 
\end{theorem}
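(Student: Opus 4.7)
The plan is to construct the functor level by level, exploiting the uniform recipe $X^{k} \mapsto X^{k} \times \RR^{n-k}$ to translate each $k$-morphism of $(\Cob_n^{\fr})^{\coprod}$ into a framed $n$-manifold to which factorization homology may be applied. The first step is to check that this recipe lands in the correct layer of $\morita_{\EE_n}(\cD^{\otimes})$. For $k=0$, the positive point thickens to $\ast \times \RR^n = \RR^n$, and since $\RR^n$ is the unit object of $\disk_{n,\fr}^{\coprod}$ we have $\int_{\RR^n} A \simeq A$, which is an $\EE_n$-algebra, i.e.\ an object of $\morita_{\EE_n}$. More generally, if $X^k$ is a framed cobordism of dimension $k$, then (in the sense of Definition~\ref{defn.n-dim framing}) it carries a framing of $TX \oplus \underline{\RR}^{n-k}$, so $X^k \times \RR^{n-k}$ is a framed $n$-manifold. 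The complementary $\RR^{n-k}$-factor supplies a canonical $\disk_{n-k,\fr}^{\coprod}$-algebra structure on $\int_{X^k \times \RR^{n-k}} A$ in exactly the way that the second factor gives $\int_{W \times \RR} A$ an $\EE_1$-structure in the proof of $\otimes$-excision; this is just the symmetric monoidality of factorization homology applied to the composite $\disk_{n-k,\fr}^{\coprod} \to \mfld_{n,\fr}^{\coprod} \xrightarrow{\int A} \cD^{\otimes}$.

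The second step is to install the bimodule structure at each level. If $X^k$ is collared by $(k-1)$-manifolds $W_0 \coprod W_1^{\op}$, then $X^k \times \RR^{n-k}$ contains open collars of the form $W_i \times \RR^{n-k+1}$, and the inclusion-induced map from the disjoint union into $X^k \times \RR^{n-k}$ exhibits $\int_{X^k \times \RR^{n-k}} A$ as a bimodule over $\int_{W_0 \times \RR^{n-k+1}} A$ and $\int_{W_1 \times \RR^{n-k+1}} A$, compatibly with the $\EE_{n-k}$-structure. This is the same collaring argument that produced the module structures in the $\tensor$-excision proof, iterated one codimension at a time. The bottom of the induction, $k=n$, produces pointed bimodules: a closed framed $n$-manifold $X^n$ has no boundary to prescribe an action, and the empty embedding $\emptyset \hookrightarrow X^n$ supplies the pointing from the unit of $\cD^{\otimes}$.

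The third step is the verification that composition is respected: this is precisely $\otimes$-excision (Theorem~\ref{theorem: excision}). When one glues $X = X_0 \cup_{W \times \RR} X_1$ (viewed now after thickening by $\RR^{n-k}$), the theorem identifies $\int_{X \times \RR^{n-k}} A$ with the relative tensor product $\int_{X_0 \times \RR^{n-k}} A \otimes_{\int_{W \times \RR^{n-k+1}} A} \int_{X_1 \times \RR^{n-k}} A$, which is exactly the composition law in $\morita_{\EE_n}$. Symmetric monoidality is immediate from Remark~\ref{remark. factorization homology is symmetric monoidal}: $\int_{(X \coprod Y) \times \RR^{n-k}} A \simeq \int_{X \times \RR^{n-k}} A \otimes \int_{Y \times \RR^{n-k}} A$. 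Identity cylinders $W \times [0,1] \times \RR^{n-k}$ go to identity bimodules because $[0,1] \simeq \RR$ as framed manifolds and the bar construction of $\int_{W\times \RR^{n-k+1}} A$ with itself gives back $\int_{W\times \RR^{n-k+1}} A$.

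The main obstacle is coherence, not any single computation: assembling the above data into a genuine symmetric monoidal functor of $(\infty,n)$-categories requires a precise model of both source and target. One must fix a model of $\Cob_n^{\fr}$ (for instance, Calaque--Scheimbauer's cut-stratified model) compatible with a model of $\morita_{\EE_n}(\cD^{\otimes})$ in which $k$-morphisms are genuinely realized as locally constant factorization algebras on stratified pieces of $\RR^n$, so that the map $X^k \mapsto \int_{X^k \times \RR^{n-k}} A$ is a strict map of appropriate simplicial/multisimplicial sets rather than merely a level-by-level assignment. Once this compatibility is arranged, each of the checks above is either a symmetric monoidality statement or a direct application of $\otimes$-excision; the labor lies almost entirely in the simultaneous bookkeeping of collars, framings, and higher composition operations. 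This is the content of Scheimbauer's thesis~\cite{scheimbauer}.
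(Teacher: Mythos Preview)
The paper does not give a proof of this theorem; it offers only the informal bulleted sketch immediately preceding the statement (thicken $X^k$ to $X^k\times\RR^{n-k}$, use collars to obtain bimodule structures, use the empty embedding for pointing) and then defers to Scheimbauer's thesis for the precise construction. Your proposal is an elaboration of exactly that sketch, correctly adding that $\otimes$-excision supplies compatibility with composition and that symmetric monoidality of $\int A$ handles disjoint unions, and you rightly identify that the substantive content---building compatible models of $(\Cob_n^{\fr})^{\coprod}$ and $\morita_{\EE_n}(\cD^\otimes)$ so that the assignment becomes an honest map of $(\infty,n)$-categories---is what Scheimbauer's thesis actually carries out. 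So your outline matches the paper's approach and is accurate as a high-level summary, with the caveat that neither you nor the paper is giving a proof here: both are pointing at~\cite{scheimbauer}.
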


\begin{remark}
One can also prove that every $\EE_n$-algebra is fully dualizable in $\morita_{\EE_n}$ without utilizing the cobordism hypothesis, see~\cite{gwilliam-scheimbauer}.
\end{remark}

\begin{remark}
In fact, one can define notions of TFT for cobordism categories ``with colors,'' or with defects, in more physical language. An example may be obtained by marking cobordisms with marked points, or by coloring different markings and different regions of a cobordisms with various labels (called colors). One can define a TFT for such cobordism categories using factorization homology as well, by assigning modules to various defects, and various $E_n$-algebras to codimension-zero strata of $n$-manifolds (with one $E_n$-algebra for each color). While there is no written account of this defining a topological field theory, the framework of factorization homology for such manifolds was constructed in~\cite{aft-2}.
\end{remark}

\clearpage
\section{Leftovers and elaborations}

\subsection{The cobordism categories}\label{section. cobordism category}
\index{$\Cob_n$}
To at least give some indication of the answer to Question One (\ref{question: n-cats and cob_n}), we give a slightly more rigorous account of how we think of $\Cob_n$.

\begin{remark}
First, let us say that there does exist a good theory of $(\infty,n)$-categories, thanks to works of Barwick~\cite{barwick}, Rezk~\cite{rezk}, and many others. The most common model of $(\infty,n)$-categories is called an $n$-fold Segal space.
\end{remark}

\begin{remark}
It is not trivial to create a cobordism category fitting into the above framework. Let us reference that the idea of how to define such a thing was given in~\cite{lurie-tft}, and made rigorous in~\cite{calaque-scheimbauer}.
\end{remark}

Without again getting into the technical details, we give some indication of how one can start to construct an $(\infty,n)$-category of cobordisms. We first ignore framings and orientations.

The general idea is that we think of the collection of all 0-morphisms (i.e., the collection of all 0-dimensional manifolds) as the collection of all subsets $W \subset \RR^\infty$ for which $W$ happens to be a compact, smooth 0-manifold. (I.e., happens to be a finite subset.)

Then, the collection of all 1-morphisms is the collection of all subsets $X \subset \RR^\infty \times \RR$ which happen to be smooth 1-manifolds\footnote{We recall that whether a subset of Euclidean space is a manifold or not is a property of the subset, not extra structure.}. Further, we demand that
	\enum
	\item Given the projection map $X \to \RR$, $X$ is ``constant'' outside a compact interval $I =[t_0,t_1]\subset \RR$. This means $X$ is equal to a product $W_0 \times (-\infty, t_0]$ and $W_1 \times [t_1, \infty)$ as a subset of $\RR^\infty \times \RR$. 
	\item Moreover, the map $X \to \RR$ is a submersion at $t_0$ and $t_1$, which guarantees that $W_0$ and $W_1$ are smooth submanifolds of $\RR^\infty$. (This also guarantees the existence of collars.)
	\item We finally demand that the projection $X \to \RR$ is proper, so that in particular the preimage of $[t_0,t_1]$ is a compact subset of $\RR^\infty \times [t_0,t_1]$.
	\enumd

Likewise, we think of the collection of $k$-morphisms as the collection of all subsets $Q \subset \RR^\infty \times \RR^k$ which happens to be a smooth submanifold, and which satisfy analogous properties to the above. Informally, one demands the existence of some $k$-dimensional compact cube in $\RR^k$ above which $Q$ is collared in a standard way that allows us to compose $Q$ along the obvious faces of the cube. Note also that we demand that, if $t_i$ labels the $i$th coordinate of $\RR^k$, the cube's $t_i = \text{constant}$ face is collared by a manifold which is ``constant'' in all directions $t_j$ with $j > i$. (The asymmetry induced by this $j>i$ condition is what allows us to say that $Q$ is a $k$-morphism.)

By doing this for all $0 \leq k \leq n$, we have more or less specified the necessary data. For example, given a $k$-morphism $Q \subset \RR^\infty \times \RR^k$, the restriction of $Q$ to the various faces of the $k$-dimensional cube in $\RR^k$ tells us what the source morphisms and target morphisms of $Q$ are. 

\begin{remark}
We note that the collection of subsets of $\RR^\infty \times \RR^k$ has a topology---it makes sense to say when two compact subsets are nearby (or, when two ``constant outside a compact subset'' are nearby). And the operation of restricting a cobordism to a face of a cube is a continuous operation. This topology is part of the reason we have the fancy symbol ``$\infty$'' in describing $\Cob_n$ as an $(\infty,n)$-category.
\end{remark}

\begin{remark}
Let us at least motivate why one might want to consider such a thing. Consider the collection $\cE nd_n(\emptyset)$ of $n$-morphisms between the empty cobordisms; i.e., the collection of subsets of $\RR^\infty \times \RR^n$ which happen to be compact smooth manifolds of dimension $n$. Choosing a particular $n$-dimensional submanifold $X \subset \RR^\infty \times \RR^n$, we pick out a connected component of $\cE nd_n(\emptyset)$.

We claim that this connected component is interesting: It is homotopy equivalent to $B\diff X$, the classifying space for the space of diffeomorphisms of $X$.

To see this, consider the space of all smooth embeddings $j: X \to \RR^\infty \times \RR^n$. By standard arguments in smooth topology, this space is contractible. On the other hand, clearly the space of diffeomorphisms of $X$ acts on the space of all $j$ by precomposition. This action is free because each $j$ is an injection. On the other hand, if I quotient the collection of embeddings by the collection of reparametrizations, I obtain the collection of all possible images---i.e., the collection of all subsets of $\RR^\infty \times \RR^n$ that happen to be diffeomorphic to $X$. This is exactly the connected component of $\cE nd_n(\emptyset)$ we picked out.

And, of course, the quotient of a contractible space by a free continuous action of a group $G$ is precisely a model for the space $BG$.

In particular, this illustrates how an $n$-dimensional TFT $Z: \Cob_n \to \cC$ assigns to $X$ an object with the action of $\diff(X)$ (or, of the orientation-preserving diffeomorphisms, for example, if we demand all our manifolds be oriented).
\end{remark}

\subsection{Framings}\label{section. framed cobordisms}
\index{framing}
Let us also elaborate on tangential structures. Fix a group homomorphism $G \to GL_n(\RR)$. 

\begin{remark}
The ``$n$'' here is the same $n$ as in $\Cob_n$.
\end{remark}

For a given $k$-dimensional manifold $X$, one has a map $\tau_X : X \to BGL_k(\RR)$ classifying the tangent bundle of $X$. (See Section~\ref{section.framings}.) If $X$ is a $k$-manifold in $\RR^\infty \times \RR^k$, we define a $G$-structure on $X$ to be a homotopy-commutating diagram
	\eqn\label{eqn. n dimensional G structure}
	\xymatrix{
		&& BG \ar[d] \\
	X \ar[r]_-{\tau_X} \ar[urr]
		& BGL_k(\RR) \ar[r]
		& BGL_n(\RR).
	}
	\eqnd
Here, $BGL_k(\RR) \to BGL_n(\RR)$ is the map induced by the usual inclusion $GL_k(\RR) \to GL_n(\RR)$. 

Put a more concrete way, consider the rank $n$ bundle $TX \oplus \underline{\RR^{n-k}}$, where $\underline{\RR^{n-k}}$ is a trivialized bundle on $X$ of rank $n-k$.  Then a $G$-structure is a reduction of structure group of this bundle to $G$.

\begin{warning-numbered}
The $n$ lurking in the background is important. For example, suppose $n > k$.
When $G = \ast$, a $G$-structure on a $k$-manifold $X$ is {\em not} the same thing as a framing on $X$ in the sense of Section~\ref{section.framings}. There, we trivialized  $TX$ itself. Here, we are trivializing a higher-rank bundle, $TX \oplus \underline{\RR^{n-k}}$. Indeed, many $k$-manifolds that do not admit a framing may admit a $G$-structure for $n$ large enough.
\end{warning-numbered}

To heed this warning, we define the following:

\begin{definition}\label{defn.n-dim framing}
Fix $n > k$ and fix a $k$-dimensional manifold $X$. An {\em $n$-dimensional framing} of $X$ is a trivialization of the bundle $TX \oplus \underline{\RR^{n-k}}$. 

More generally, fix a continuous group homomorphism $G \to GL_n(\RR)$. Then data as in~\eqref{eqn. n dimensional G structure}, is called an $n$-dimensional $G$-structure. 
\end{definition}

\begin{remark}
Fix $G = \ast$ to be the trivial group---i.e., the case of a framing.
It is often said that an $n$-dimensional framing could be thought of as a framing on a small trivial neighborhood $X \times \RR^{n-k}$ of $X$. If you prefer to think this way, let me caution that it is most natural to think of an $n$-dimensional framing as a framing on $X \times \RR^{n-k}$ such that the trivialization factors through the collapse map $X \times \RR^{n-k} \to X$. Informally, such a framing is ``constant'' along the $\RR^{n-k}$ directions, and that is the kind of framing that an $n$-dimensional framing captures.
\end{remark}

So let us retroactively set:

\begin{definition}\label{defn. cobordism framing}
Fix $n > k$.
A $k$-morphism of $\Cob_{n}^{\fr}$ is a $k$-dimensional submanifold $X \subset \RR^\infty \times \RR^k$ such that $X$ is equipped with an $n$-dimensional framing.

Note that the boundary $\del X \subset X$ comes equipped with a canonical isomorphism $T(\del X) \oplus \underline{\RR} \to TX|_{\del X}$, and hence we have an isomorphism 
	\eqnn
	T(\del X) \oplus \RR^{n-k + 1}
	\cong
	TX|_{\del X} \oplus \RR^{n-k}
	\eqnd
and this allows us to articulate what the sources and targets---as $n$-dimensionally framed manifolds---of a $k$-morphism are.
\end{definition}

\begin{remark}
This definition has real consequences. For example, we should still prove that the point is fully dualizable in $\Cob_2^{\fr}$. Then what is the right adjoint to $u$ as described in Example~\ref{example. point is 2-dualizable}?

First, we must specify ``which'' framed horseshoe $u$ we mean, as $Tu \oplus \underline{\RR}$ admits several inequivalent framings once we have fixed the 2-dimensional framing data on the boundary points. 

Further, whatever the right adjoint $r$ is, the composition $r \circ u$ should be exhibited as the boundary of a disk (the cap), but this must be done {\em in a framed way}. Because not every 2-dimensional framing of a circle extends to a framing of the tangent bundle of the cap, this puts a restriction on the possible 2-dimensional framing(s) that $r$ can be equipped with.
\end{remark}

\subsection{Structure groups and homotopy fixed points}
Finally, we note that we can always alter an $n$-dimensional framing on a $k$-manifold $X$ by acting on $\underline{\RR^{n}}$ by $GL_n(\RR)$. From this observation, one can in fact exhibit a continuous action of $GL_n(\RR)$ on the category $\Cob_{n}^{\fr}$ itself.

In particular, given a group homomorphism $G \to GL_n$, one can ask for {\em $G$-fixed points} of the induced action on $\Cob_{n}^{\fr}$. 

\begin{warning-numbered}
We warn the reader that here, by a $G$-fixed point we mean a functor from $EG$ to $\Cob_{n}^{\fr}$ extending a functor $G \to \Cob_{n}^{\fr}$ (given by picking out an object). That is, exhibiting a $G$-fixed point in the homotopical world is not finding an object of $\Cob_{n}^{\fr}$, but exhibiting a way in which we can trivialize the $G$-action along a particular orbit of the $G$-action on $\Cob_{n}^{\fr}$.

To emphasize this distinction, what we are calling a fixed point is sometimes called a homotopy fixed point.

We illustrate this point. The reader may assume $G$ is a discrete group, or should otherwise topologize the collection of simplices by the topology of $G$.

$EG$ can be described combinatorially; it has $G$ many vertices, $G \times G$ many edges, $G^l$ many $l$-simplices, and so forth. Informally, one obtains $EG$ by constructing the ``complete Cayley complex'' on the generating set $G$ itself; unlike a Cayley graph which inserts only edges for every $G$-translation, we insert triangles and higher simplices for every sequence of $G$-translations. It follows straightforwardly that $EG$ is contractible and enjoys a free $G$ action. 

If $G$ acts on a category $\cC$, any object $X \in \cC$ determines a functor $G \to \cC$ by acting on $X$. An extension of $G \to \cC$ to $EG$ (i.e., a factorization through $EG$) in particular determines equivalences $X \xrightarrow{\cong} gX$ for any $g \in G$, and these equivalences are compatible with multiplication in $G$ by definition of the higher simplices of $EG$. 

Thus, while $X$ may not {\em equal} $gX$ on the nose, they may abstractly be isomorphic, and the functor from $EG$ specifies these isomorphisms in a way coherent with the $G$-action.
\end{warning-numbered}

As above, fix a continuous homomorphism $G \to GL_n(\RR)$. Since $G$ then acts on $\Cob_n^{\fr}$, we obtain a $G$-action on the space of TFTs; by the Cobordism hypothesis, this induces a $G$-action on the space of fully dualizable objects of the target $\cC^{\tensor}$. 

\begin{example}
When $n=1$ and $\cC^{\tensor} = \Vect^{\tensor_k}$, the $GL_1(\RR) \simeq O(1)$-action sends a finite dimensional vector space $V$ to its dual. Note the necessity of restricting to fully dualizable objects (and isomorphisms between them; not all morphisms). Sending a vector space to its dual is usually a contravariant operation.
\end{example}

This action allow us to identity TFTs with any $G$-structure:

\begin{theorem}[Cobordism hypothesis for $G$-structures, Theorem~2.4.26 of~\cite{lurie-tft}]
\index{cobordism hypothesis!for $G$-structures}
\index{$G$-structures}
Fix a continuous group homomorphism $G \to GL_n(\RR)$ and let $\Cob_{n}^G$ be the $(\infty,n)$-category of cobordism with $n$-dimensional $G$-structures. Then there is an equivalence of $\infty$-categories
	\eqnn
	\Fun^{\tensor}((\Cob_{n}^G)^{\coprod}, \cC^{\tensor})
	\simeq
	(\cC^{f.d.})^{G}
	\eqnd
between the $\infty$-category of fully extended $n$-dimensional TFTs for $G$-structures, and the space of $G$ fixed points of the space $\cC^{f.d.}$.
\end{theorem}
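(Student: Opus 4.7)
The plan is to deduce this from the framed cobordism hypothesis (itself an assumed input) by exploiting the continuous $GL_n(\RR)$-action on $\Cob_n^{\fr}$ that was described just prior to the theorem statement. The first step is to promote this action: since $GL_n(\RR)$ acts on $\Cob_n^{\fr}$ by modifying the $n$-dimensional framing data of every $k$-morphism (preserving the symmetric monoidal structure), it acts by precomposition on $\Fun^\tensor((\Cob_n^{\fr})^{\coprod},\cC^\tensor)$, and the framed cobordism hypothesis then transports this to the $GL_n(\RR)$-action on $\cC^{f.d.}$ described above. Restricting along $G \to GL_n(\RR)$ gives $G$-actions everywhere.

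Next I would identify $\Cob_n^G$ with the homotopy orbit category $(\Cob_n^{\fr})_{hG}$, interpreted in the world of symmetric monoidal $(\infty,n)$-categories. The key observation is that an $n$-dimensional $G$-structure on a $k$-manifold $X$ is the data of a lift
\eqnn
\xymatrix{
& & BG \ar[d] \\
X \ar[r]_-{\tau_X} \ar[urr] & BGL_k(\RR) \ar[r] & BGL_n(\RR),
}
\eqnd
which is the same as the data of a point in the homotopy pullback $\Emb_G(X,\RR^\infty) := \Emb^{\fr}(X,\RR^\infty) \times_{BGL_n(\RR)} BG$; equivalently, a $G$-equivariant choice of framing on the orbit of $X$ under the $GL_n(\RR)$-action. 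Packaging this across all of $\Cob_n^{\fr}$ yields the identification $\Cob_n^G \simeq (\Cob_n^{\fr})_{hG}$ as symmetric monoidal $(\infty,n)$-categories.

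The final step is formal: mapping out of a homotopy orbit is taking homotopy fixed points of the mapping object, so
\eqnn
\Fun^\tensor((\Cob_n^G)^{\coprod},\cC^\tensor) \simeq \Fun^\tensor((\Cob_n^{\fr})^{\coprod},\cC^\tensor)^{G} \simeq (\cC^{f.d.})^{G},
\eqnd
where the second equivalence uses that the framed cobordism hypothesis is itself $GL_n(\RR)$-equivariant (which follows from its naturality, i.e.\ from the evaluation-at-$\ast^+$ functor being equivariant once one pins down compatible models for both sides). The last expression is, by the conventions of the preceding subsection, shorthand for homotopy fixed points.

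The hard part will be the middle step: constructing the equivalence $\Cob_n^G \simeq (\Cob_n^{\fr})_{hG}$ in a way that is compatible with the full $(\infty,n)$-categorical structure and the symmetric monoidal structure. Even at the level of objects the statement is clean, but cohering it across all $k$-morphisms simultaneously requires a concrete model of both sides---most naturally the $n$-fold Segal space model of Calaque--Scheimbauer~\cite{calaque-scheimbauer}, where $\Cob_n^G$ is built as a pullback of spaces of submanifolds equipped with tangential $G$-structure and where the $GL_n(\RR)$-action on $\Cob_n^{\fr}$ is visibly principal along the framing coordinates. All of this, of course, ultimately rests on the framed cobordism hypothesis, which remains the principal outstanding difficulty.
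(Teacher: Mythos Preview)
The paper does not give a proof of this theorem; it is stated as a citation of Theorem~2.4.26 of~\cite{lurie-tft}, with no argument supplied beyond the preceding informal discussion of the $GL_n(\RR)$-action on $\Cob_n^{\fr}$ and the notion of homotopy fixed points. So there is no ``paper's own proof'' to compare against.

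That said, your outline is essentially the argument Lurie sketches in~\cite{lurie-tft}: identify $\Cob_n^G \simeq (\Cob_n^{\fr})_{hG}$, use the adjunction between homotopy orbits and homotopy fixed points on functor categories, and invoke the framed cobordism hypothesis equivariantly. Your assessment of where the work lies is accurate---the identification $\Cob_n^G \simeq (\Cob_n^{\fr})_{hG}$ at the level of symmetric monoidal $(\infty,n)$-categories is the nontrivial geometric input, and the equivariance of the evaluation-at-$\ast^+$ functor is what lets you pass the $G$-action across the framed equivalence. One small caution: the claim that the $GL_n(\RR)$-action on $\Cob_n^{\fr}$ is ``visibly principal along the framing coordinates'' is a bit glib; what you actually need is that the forgetful map $\Cob_n^{\fr} \to \Cob_n^G$ exhibits the target as the homotopy quotient, which in the Calaque--Scheimbauer model amounts to checking a levelwise statement about spaces of framed submanifolds versus $G$-structured submanifolds. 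This is true but deserves a sentence of justification rather than the word ``visibly.''
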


\begin{example}
Let us consider the $(\infty,2)$-category $\cC$ of $\infty$-categories with $\tensor = \times$, and suppose we have a fully dualizable object $D$. (In the $\CC$-linear setting, an example of such a thing is the dg-category $D^bCoh$ of a smooth and proper variety.)

Fixing $D$, we have a map $O(2) \to \cC^{f.d.}$, and in particular a map from $SO(2)$ to $\cC^{f.d.}$. A loop based at the identity in $SO(2) \simeq S^1$ is thus the data of an automorphism of $D$; a generator of $\pi_1(S^1) \cong \ZZ$ gives a distinguished automorphism.

When $\cC$ is the $(\infty,2)$-category of $\CC$-linear $\infty$-categories with $\tensor = \tensor_\CC$, let us fix $D = D^bCoh(Y)$ for some smooth and proper complex variety $Y$. Then this automorphism may be identified with the Serre automorphism.
\end{example}

\clearpage
\section{Exercises}

\begin{exercise}\label{exercise. cob hyp in dim 1}
Show that an object of $\Vect_{\kk}^{\tensor k}$, thought of as a symmetric monoidal $\infty$-category as usual, is fully dualizable if and only if it is a finite-dimensional $\kk$-vector space.

Verify the cobordism hypothesis in this case; be careful in proving that any natural transformation of symmetric monoidal functors $Z \to Z'$ must actually be a natural {\em isomorphism}.
\end{exercise}

\begin{exercise}
In the case $G = O(1)$, consider $\Cob_{1}^G$. (The cobordism category of unoriented 0- and 1-dimensional manifolds.) Verify the $G$-structured version of the cobordism hypothesis.

In this case, show that a $G$-structured TFT is the same thing as a vector space $V$ equipped with a {\em symmetric non-degenerate pairing}. (Hint: Identify $V$ with $W$ using the $O(1)$-fixed point structure.)
\end{exercise}

\begin{exercise}
Verify Remark~\ref{remark. units are dualizable and left-right duals}.
\end{exercise}

\begin{exercise}
Let $\cC^{\tensor}$ be a symmetric monoidal $(\infty,n)$-category (which we have not yet fully defined).  

Convince yourself that any equivalence $k$-morphism (i.e., any equivalence between two $(k-1)$-morphisms) is $(k+1)$-dualizable.

Convince yourself that the unit of $\tensor$ is fully dualizable.
\end{exercise}

\begin{exercise}
Let $\ast^+$ be a point with some fixed framing, considered as an object of $\Cob_{n}^{\fr}$. Prove that the space of self-equivalences of $\ast^+$ in $\Cob_{n}^{\fr}$ is homotopy equivalent to $\Omega GL_n(\RR)$, the based loop space of $GL_n(\RR)$ at the identity matrix. (Note that this is in turn homotopy equivalent to $\Omega SO_n(\RR)$.)

You will want to read Section~\ref{section. framed cobordisms} to think carefully about this exercise.
\end{exercise}

\begin{exercise}
Fix $n=2$ and fix a point $\ast^+ \in \Cob_2^{\fr}$. Draw pictures of the automorphisms of $ \ast^+$; note that the space of automorphisms is given by $\Omega SO(2) \simeq \Omega S^1 \simeq \ZZ$. Can you quantify/describe the sense in which there is an integers' worth of invertible cobordisms from a point to itself?
\end{exercise}

\begin{exercise}
In stating the cobordism hypothesis, we have claimed that the collection of TFTs forms a {\em space}, which informally means that every symmetric monoidal natural transformation $Z \to Z'$ is actually a natural equivalence.  (See Remark~\ref{remark. TFT maps are equivalences}.)

Convince yourself of this fact.
\end{exercise}

\begin{exercise}
Fix $n \geq 1$ and let $\morita_n$ denote the Morita $(\infty,n)$-category (Definition~\ref{defn. morita category}). Let $\ZZ/2\ZZ \cong \pi_0 O(n)$ denote the group of orientations. Convince yourself that the action of $\ZZ/2\ZZ$ from the cobordism hypothesis sends an $\EE_n$-algebra $A$ to its opposite algebra, $A^{\op}$.
\end{exercise}

\begin{exercise}[Morita invariance for compact, top-dimensional manifolds]
Fix a symmetric monoidal $\infty$-category $\cD^{\tensor}$ admitting sifted colimits, for which $\tensor$ preserves sifted colimits in each variable.

Recall Scheimbauer's result (Theorem~\ref{theorem.facthom-TFT}), that factorization homology defines a framed, fully extended $n$-dimensional TFT with values in the Morita $(\infty,n)$-category $\morita_{\EE_n}(\cD^{\tensor})$ of $\EE_n$-algebras  (Definition~\ref{defn. morita category}).

Now assume that two $E_n$-algebras $A$ and $B$ are Morita equivalent. 
\index{Morita equivalence}
That is, there exist two $E_{n-1}$-algebras $M$ and $N$ that are $(A,B)$ and $(B,A)$-bimodules, respectively, equipped with equivalences 
	\eqn\label{eqn:morita-equivalence}
	M \tensor_B N \simeq A,
	\qquad
	N \tensor_A M \simeq B
	\eqnd
{\em in the Morita category}, meaning we are supplied with a sequence of further bimodules (for example, between $M \tensor_B N$ and $A$) exhibiting the Morita equivalences above, all the way until we reach an actual equivalence of objects in $\cD$ as bimodules between $\EE_1$-algebras.

(For instance, when $n=1$, the equivalences in~\eqref{eqn:morita-equivalence} are actual equivalences of objects in $\cD$, while if $n=2$, the equivalences in~\eqref{eqn:morita-equivalence} are equivalences exhibited by invertible bimodules; in particular, these need not imply $M \tensor_B N$ and $A$ are equivalent as objects in $\cD$.)

Show that if two $E_n$-algebras $A$ and $B$ are Morita equivalent, then for any compact framed $n$-manifold $X$, we have an equivalence
	\eqnn
	\int_X A \simeq \int_X B
	\eqnd
as objects in $\cD$.
\end{exercise}

\backmatter%%%%%%%%%%%%%%%%%%%%%%%%%%%%%%%%%%%%%%%%%%%%%%%%%%%%%%%

\printindex

\end{document}